\documentclass[12pt]{amsart}
\pdfoutput=1
\usepackage{microtype}
\usepackage[active]{srcltx}
\usepackage{calc,amssymb,amsthm,amsmath,amscd, eucal,ulem, mathtools}
\usepackage{phaistos}
\usepackage{alltt}
\RequirePackage[dvipsnames,usenames]{xcolor}

\input{mabliautoref.sty}
\input{xy}
\xyoption{all}
\input{kmacros3.sty}
\usepackage{tikz,calc}
\usepackage{tikz-cd}
\usetikzlibrary{external}
\usepackage[marvosym]{tikzsymbols}
\usepackage{amsfonts, mathrsfs}
\usepackage[cal=boondox, calscaled=1.05]{mathalfa}
\usepackage{calligra}
\usepackage{stmaryrd} %power series bracket
\usepackage{dcpic, pictexwd}
\usepackage[left=1in,top=1in,right=1in,bottom=1in]{geometry}
\usepackage{bm}
\usepackage{verbatim}
\usepackage{upgreek}
\usepackage{hyperref}
\numberwithin{equation}{theorem}

\newcommand{\F}{\mathbb{F}}

\renewcommand{\:}{\colon}
\newcommand{\eg}{{\itshape e.g.} }
\renewcommand{\m}{\mathfrak{m}}
\renewcommand{\n}{\mathfrak{n}}
\newcommand{\p}{\mathfrak{p}}
\newcommand{\kay}{\mathcal{k}}
\newcommand{\el}{\mathcal{l}}

\newcommand{\q}{\mathfrak{q}}

\DeclareMathOperator{\Ass}{Ass}
\DeclareMathOperator{\GL}{GL}

\DeclareMathOperator{\Der}{Der}

\let\Ass\relax
\DeclareMathOperator{\Ass}{\mathbf{Ass}}
\DeclareMathOperator{\Assoc}{Ass}

\DeclareMathOperator{\ssHom}{ \sH\!\!\;\!\!\text{\calligra{\Large om}\,}}

\usepackage{setspace}
\usepackage{enumerate}
\usepackage{graphicx}
\usepackage[all,cmtip]{xy}

\usepackage{verbatim}

\renewcommand{\sA}{\mathcal{A}}

\renewcommand{\sC}{\mathcal{C}}

\renewcommand{\sF}{\mathcal{F}}

\renewcommand{\sH}{\mathcal{H}}

\renewcommand{\sO}{\mathcal{O}}

\newcommand{\eps}{\varepsilon}
\usepackage{marvosym}

\begin{document}
\title{Pulling back Cartier structures along regular maps} 
\author[J.~Carvajal-Rojas]{Javier Carvajal-Rojas}
\address{Centro de Investigaci\'on en Matem\'aticas, A.C., Callej\'on Jalisco s/n, 36024 Col. Valenciana, Guanajuato, Gto, M\'exico}
\email{\href{mailto:javier.carvajal@cimat.mx}{javier.carvajal@cimat.mx}}
\author[A.~St\"abler]{Axel St\"abler}
\address{Universit\"at Leipzig\\ Mathematisches Institut\\ Augustusplatz 10\\
04109 Leipzig\\Germany} 
\email{\href{mailto:staebler@math.uni-leipzig.de}{staebler@math.uni-leipzig.de}}

\keywords{Cartier module, regular morphism, twisted inverse image, (mixed) test ideals}

\thanks{Carvajal-Rojas was partially supported by the grants ERC-STG \#804334, FWO \#G079218N, and SECIHTI \#CBF2023-2024-224, \#CF-2023-G-33, and \#CBF-2025-I-673. Stäbler was supported in part by SFB/TRR 45 Bonn-Essen-Mainz funded by DFG}

\subjclass[2020]{13A35, 14B25}

\begin{abstract}
We introduce a framework for pulling back Cartier modules and their associated invariants along regular $F$-finite morphisms. To achieve this, we construct a relative Cartier isomorphism and operator for an arbitrary regular $F$-finite map of locally noetherian schemes. As an application, we obtain new results on the constancy regions of mixed test ideals, based on the work of Felipe P\'erez.
\end{abstract}
\maketitle

\section{Introduction}

Let $f: X \to Y$ be a smooth morphism between locally noetherian schemes; that is, $f$ is a formally smooth morphism of finite type, or equivalently, a regular morphism of finite type. In this situation, the relative cotangent sheaf $\Omega_f$ is a locally free coherent sheaf on $X$, and we can form the associated canonical sheaf $\omega_f \coloneqq \det \Omega_f$. This allows us to define the \emph{twisted/exceptional inverse image} functor on coherent sheaves by
\[ f^! \coloneqq \omega_f \otimes f^* .\]
This functor is fundamental in the framework of Serre duality and, more broadly, in the abstract theory of Grothendieck duality.

On the other hand, the theory of \emph{$F$-singularities} in positive characteristic is substantially based on the Grothendieck duality of the Frobenius morphisms. For example, this framework is used to define the \emph{Cartier operator} $\kappa \: F_* \omega_X \to \omega_X$ on a smooth (and then normal) scheme $X$ over a perfect field of characteristic $p>0$; see \cite[Theorem 7.2]{KatzNilpotentConnections} and \cite{BlickleSchwedeSurveyPMinusE}.

It is therefore not surprising that the theory of Cartier modules (and crystals) is compatible with $f^!$. This was worked out by Blickle and the second named author in \cite{BlickleStablerFunctorialTestModules}, where they proved that $f^!$ can be viewed as a functor between Cartier modules. More importantly, their construction commutes with \emph{funtorial test modules}; which we briefly recall next. 

In judging the ($F$-)singularities of a Cartier module $(\sF,\phi)$, one considers the complexity and size of its lattice of Cartier submodules. The funtorial test module $\uptau(\sF,\phi) \subset \sF$ is the smallest \emph{nontrivial} one; where it is a bit technical and non-obvious to define non-triviality in this generality (see \autoref{def.testmodulesummary}). Thus, the test module gives a lower bound on how singular the Cartier module is. For example, $(\sF,\phi)$ is said to be \emph{$F$-regular} if $\uptau(\sF,\phi) = \sF$. In the case of $(\omega_X,\kappa)$ and $X$ Cohen--Macaulay, this refers to the $F$-rationality of $X$, which is the $F$-singularity analog to rational singularities.

All of this is very pleasing, but in many applications there is a problem: it depends on $f$ being of finite type. We will mention an explicit example of this being an issue in the context of constancy regions of mixed test ideals. Our key observation here is that, in positive characteristics, there is another more natural notion of finiteness that we can use instead. Namely, the \emph{$F$-finiteness} of $f$, which also ensures that $\Omega_f$ is coherent. To explain what this is, let us consider the following cartesian diagram.
\[
\xymatrixcolsep{5pc}\xymatrixrowsep{3pc}\xymatrix{
X \ar@/_/[ddr]_-{f} \ar@/^/[drr]^-{F^e_X} \ar@{.>}[dr]|-{F^e_{f}}\\
&X^{(q)} \ar[d]_-{f^{(q)}} \ar[r]^-{G^e_f}  & X\ar[d]^-{f} \\
&Y \ar[r]^-{F^e_Y} &Y}
\]
The map $F_f^e \: X \to X^{(q)}$ is the \emph{$e$-th Frobenius morphism} of a given morphism $f$. Following Hashimoto \cite{HashimotoFfinitenessAndItsdescent}, we say that $f$ is $F$-finite if $F^e_f$ is finite for all $e>0$ (it suffices to consider $e=1$ \cite[Definition 2.3]{CarvajalRojasStablerPristineMorphisms}). 

As we shall explain, if $f \: X \to Y$ is a regular $F$-finite morphism of locally noetherian schemes, then $\Omega_f$ is a locally free coherent sheaf and we can define $\omega_f \coloneqq \det \Omega_f$. Furthermore, there is nothing stopping us from defining the functor $f^! \coloneqq \omega_f \otimes f^*$ from coherent sheaves on $Y$ to coherent sheaves on $X$. We are able to construct meaningful foundational results for the $F$-singularity theory in this setup. The first is as follows, for which we highlight that $f^{(q)}$ is another regular $F$-finite map of locally noetherian schemes.
\begin{theoremA*}[{\autoref{thm.regulardualizing}, \autoref{Sec.RelCartierOperartor}}] \label{theoA}
Let $f\:X \to Y$ be a regular $F$-finite map of locally noetherian schemes. With notation as above, there is a \emph{relative Cartier isomorphism} 
\[
\alpha_e \:\omega_f^{1-q} \xrightarrow{\sim} F^{e,!}_f \sO_{X^{(q)}},
\] 
where $F_f^{e,!}$ is Grothendieck duality functor for finite covers applied to $F_f^e$. Moreover, there is an induced \emph{relative Cartier operator} natural transformation 
\[
\varkappa^e \: F^e_{f \ast} f^! \to f^{(q)!}.
\]
\end{theoremA*}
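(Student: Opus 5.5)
The plan is to first establish the local structure of the relative Frobenius $F_f \colon X \to X^{(p)}$ for a regular $F$-finite morphism $f$, and then deduce the isomorphism $\alpha_e$ by a local computation that globalizes canonically. Since $f$ is regular, the relative Frobenius $F^e_f$ is flat (Radu--André). It is finite because $f$ is $F$-finite. Hence $F^e_{f*}\sO_X$ is a locally free $\sO_{X^{(q)}}$-module of finite rank. Next, $\Omega_f = \Omega_{X/X^{(q)}}$, because $d$ kills $p$-th powers and $\sO_{X^{(q)}} = \sO_Y \otimes \sO_X^{q}$ in the affine description. Locally, at a point $x$, I will show that $\Omega_f$ is free with basis $dt_1,\dots,dt_n$, where the $t_i$ form a relative $p$-basis. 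This means the monomials $t^{a}$, $0\le a_i<q$, form a basis of $\sO_X$ over $\sO_{X^{(q)}}$. To get this, I would reduce to the fibre: flatness of $F_f$ and regularity of $f$ make the fibre $X_y$ geometrically regular over $k(y)$. By Kunz/Cartier theory for geometrically regular algebras, the fibre admits a $p$-basis, and the elements $dt_i$ with $dt_i$ a basis of $\Omega_{X_y/k(y)}\otimes k(x)$ lift to a $p$-basis over $X^{(q)}$ by Nakayama, using freeness of $F_{f*}\sO_X$. This step is the main obstacle: ensuring that a $p$-basis exists locally without a finite-type hypothesis, and that the rank of $\Omega_f$ is locally constant and equal to the number $n$ of basis elements.

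Granting this, $F^{e,!}_f\sO_{X^{(q)}} = \mathcal{H}om_{\sO_{X^{(q)}}}(F^e_{f*}\sO_X,\sO_{X^{(q)}})$ is locally free of rank one over $\sO_X$. It is generated by the trace-type functional $\Phi$ that sends $t^{(q-1,\dots,q-1)}$ to $1$ and the other basis monomials to $0$. I then define $\alpha_e$ locally by
\[
(dt_1\wedge\cdots\wedge dt_n)^{\otimes(1-q)} \longmapsto \Phi .
\]
The key verification is independence of the $p$-basis. Under a change of $p$-basis $t\mapsto s$, the generator $\Phi$ transforms by $\det(\partial s_i/\partial t_j)^{q-1}$, exactly as $(ds)^{\otimes(1-q)}$ does. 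I would check this by the classical Cartier/Katz argument: the Cartier operator on top forms, $C(g\, dt) = \Phi(g\,t^{q-1})\,dt$ in the appropriate sense, is coordinate independent because it can be characterized intrinsically. Equivalently, the perfect pairing $F_*\Omega^n \otimes F_*\Omega^n\to$ top forms identifies both sides with the inverse of the relative Frobenius twist. The local isomorphisms then glue, giving $\alpha_e$. I would also check compatibility with composing Frobenii, so that $\alpha_{e+e'}$ is obtained from $\alpha_e$ and $\alpha_{e'}$.

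For $\varkappa^e$, take $\sG$ on $Y$. Note that $G^{e*}_f\omega_f\cong\omega_{f^{(q)}}$ by flat base change of $\Omega$. By base change along $F^e_Y$ we have $F^{e*}_f f^{(q)*}\sG = f^*F^{e*}_Y\sG$, and $F^{e*}_f\omega_{f^{(q)}}\cong\omega_f^{q}$. By duality for the finite flat map $F^e_f$ and the projection formula,
\[
F^{e,!}_f f^{(q)!}\sG \cong F^{e,!}_f\sO_{X^{(q)}}\otimes F^{e*}_f(\omega_{f^{(q)}}\otimes f^{(q)*}\sG) \cong \omega_f^{1-q}\otimes\omega_f^{q}\otimes F^{e*}_f f^{(q)*}\sG \cong \omega_f\otimes F^{e*}_f f^{(q)*}\sG .
\]
Here the middle isomorphism uses $\alpha_e$. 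The counit $F^e_{f*}F^{e,!}_f\to\mathrm{id}$ then gives a natural map $F^e_{f*}(\omega_f\otimes F^{e*}_f f^{(q)*}\sG)\to f^{(q)!}\sG$. In the setting of Cartier modules, where $f^!$ is applied and compared along the Frobenius of $Y$, this is the relative Cartier operator $\varkappa^e\colon F^e_{f*}f^!\to f^{(q)!}$. Naturality in $\sG$ is then formal.
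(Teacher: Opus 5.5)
Your proposal is correct in substance and follows the paper's skeleton: local $d/p$-basis, $(\mathrm{d}t_1\wedge\cdots\wedge\mathrm{d}t_n)^{\otimes(1-q)}\mapsto\Phi^e_t$, basis-independence, then $\varkappa^e$ from the trace of $F^e_f$. The decisive step, basis-independence, is done by a genuinely different route.

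\textbf{Comparison of the key step.} The paper proves $\Phi_t=\Phi_s\cdot F_*(\det J)^{p-1}$ (for $\mathrm{d}s=J\,\mathrm{d}t$) by brute force. It expresses the transition coefficient through the derivations $\partial_{t_k}=\sum_l J_{lk}\partial_{s_l}$ and extracts a universal polynomial $\xi_{\bm{p-1}}(J)$. It then shows this defines a homomorphism $\GL_n\to\mathbb{G}_m$ by checking elementary matrices, and compares it with $\det^{p-1}$ on Jordan blocks. Finally it builds $\alpha_e$ recursively from $\alpha_1$.

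Your intrinsic-Cartier-operator argument also works, but Katz's theorem is stated for smooth maps, so you must supply two things.
\begin{enumerate}
\item Katz's map $C^{-1}\colon\Omega^\bullet_{X^{(p)}/Y}\to\mathcal{H}^\bullet(F_{f*}\Omega^\bullet_{X/Y})$, $\mathrm{d}(b\otimes 1)\mapsto[b^{p-1}\mathrm{d}b]$, exists for any morphism of $\F_p$-schemes.
\item If $t$ is a $p$-basis, then as a complex of $S_{F_*R}$-modules $\Omega^\bullet_{S/R}$ has basis $t^a\,\mathrm{d}t_I$ ($0\le a_i\le p-1$), with the differential of $\Omega^\bullet_{\F_p[T]/(T^p)}$. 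Hence $C^{-1}$ is an isomorphism in top degree, and the resulting $C$ satisfies $C(g\,\mathrm{d}t)=\Phi_t(g)\,\mathrm{d}t^{(p)}$, not $\Phi(gt^{q-1})$.
\end{enumerate}
Now compute $C(g\,\mathrm{d}s)=C(g\det J\,\mathrm{d}t)$ in both bases, using that $\det J\otimes 1$ acts on $F_*S$ as $(\det J)^p$. This gives $\Phi_t=\Phi_s\cdot F_*(\det J)^{p-1}$ at once, and iterating $C$ gives the $q$-version.

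What your route buys:
\begin{itemize}
\item no combinatorics, and all $e$ are handled at once, with compatibility in $e$ automatic;
\item no gluing: the intrinsic $C^e$ is a global section of $\ssHom(F^e_{f*}\omega_f,\omega_{f^{(q)}})\cong F^e_{f*}(F^{e,!}_f\sO_{X^{(q)}}\otimes\omega_f^{q-1})$, locally equal to $\Phi^e_t\otimes(\mathrm{d}t)^{\otimes(q-1)}$;
\item an immediate identification of $\kappa^e_f$ with the classical Cartier operator.
\end{itemize}
It is precisely the alternative proof of $\xi_{\bm{p-1}}=\det^{p-1}$ that the paper's remark asks for. The paper's route, in exchange, is fully elementary and yields the combinatorial congruence.

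\textbf{Things to fix.}
\begin{itemize}
\item \emph{Local $p$-basis.} Fibrewise existence of a $p$-basis is not ``Kunz/Cartier theory''. That a $d$-basis of a geometrically regular, relatively $F$-finite algebra over a field is a $p$-basis is the field case of Tyc's theorem, which is its real content. Your Nakayama lifting is fine: the fibre of $F_f$ over $y$ is $F_{X_y/k(y)}$, and $F_{f*}\sO_X$ is locally free. But it is simpler to do what the paper does: pick $x_i$ with $\mathrm{d}x_i$ a local basis of the locally free $\Omega_f$, and invoke Tyc/Ma--Polstra directly.
\item \emph{Pullback identity.} The identity $F^{e*}_f f^{(q)*}\mathcal{G}=f^*F^{e*}_Y\mathcal{G}$ is false. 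Since $f^{(q)}\circ F^e_f=f$, one has $F^{e*}_f f^{(q)*}\mathcal{G}=f^*\mathcal{G}$; the identity involving $F^{e*}_Y$ is $f^*F^{e*}_Y=F^{e*}_f f^{(q)*}F^{e*}_Y$. With the correct identity, the source of your counit map is literally $F^e_{f*}(\omega_f\otimes f^*\mathcal{G})=F^e_{f*}f^!\mathcal{G}$, and your final hedging sentence should be dropped. The map you obtain is exactly the paper's $\varkappa^e$: the trace of $F^e_f$ composed with $\gamma_e$, twisted by $G^{e*}_f\omega_f$ via the projection formula.
\item \emph{Minor points.} $G^{e*}_f\omega_f\cong\omega_{f^{(q)}}$ needs no flatness. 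The duality pairing you want is $F_{f*}\sO_X\otimes F_{f*}\omega_f\to\omega_{f^{(p)}}$, $(a,\eta)\mapsto C(a\eta)$, not a pairing of $F_*\Omega^n$ with itself.
\end{itemize}
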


For $Y = \mathbb{F}_p$, i.e., the absolute case, \autoref{thm.regulardualizing} was also recently and independently obtained by Daichi Takeuchi in \cite[Proposition A.3]{DaichiBernsteinSatoDModulePosChar} employing similar methods.

Our construction bypasses all finite type conditions and relies instead on the local equivalence between a differential basis of $\Omega_f$ and that of a $p$-basis of $f\: X \to Y$. This equivalence is due to Tyc \cite{TycDifferentialBasisAndp-basis} (see also the work of Ma and Polstra \cite[Section 11]{polstramafsingularities} for a great exposition and complete proofs). Our relative Cartier isomorphism and operators are conceptualizations of these equivalences and of the way the corresponding change of bases matrices relate to each other. Even though our formulation might look abstract and formal, it is actually still quite down-to-earth. To illustrate this, we also obtain an explicit description of these maps in the local setting (see \autoref{le.explicitrelativecartierstructureregular}). These descriptions further show that our maps coincide with the classical one in the finite-type setup. 

As an application of this result, if $\sC$ is a Cartier $\sO_Y$-algebra, then we construct a functor $f^! \sF \coloneqq f^\ast \sF \otimes \omega_f$ from Cartier $\sC$-modules to Cartier $f^\ast \sC$-modules. Of course, these more general pairs $(f^*,f^!)$ generalize the aforementioned pullback pair defined in \cite{BlickleStablerFunctorialTestModules} for smooth morphisms. However, it should be noted that our definition is different from theirs (we cannot use any finite-type hypothesis). Furthermore, we obtain the following extensions of their results (here, $\uptau$ is the test module functor introduced in \cite{BlickleStablerFunctorialTestModules}).

\begin{theoremB*}[\autoref{cor.tauregulartransforms}]
\label{theoB}
Let $f \: X \to Y$ be a regular $F$-finite morphism of locally noetherian schemes. Then, there is a natural isomorphism $\uptau \circ f^{!} \to f^{!} \circ \uptau$ of functors.
\end{theoremB*}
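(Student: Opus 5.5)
Theorem B follows the strategy of \cite{BlickleStablerFunctorialTestModules} for smooth morphisms, with the finite-type inputs replaced by the relative Cartier isomorphism and operator of Theorem~A. The first step is to set up $f^!\sF = f^*\sF\otimes\omega_f$ as a Cartier $f^*\sC$-module. Given a local section $\phi\colon F^e_{Y*}\sF\to\sF$ of $\sC_e$, we obtain a map $F^e_{X*}(f^*\sF\otimes\omega_f)\to f^*\sF\otimes\omega_f$ as the composite of three maps:
\begin{itemize}
\item first, $F^e_{f*}$ followed by $G^e_{f*}$;
\item second, the identification $f^{(q)*}F^{e*}_Y$-type base change $G^{e*}_f f^*\sF\cong f^{(q)*}F^{e*}_Y\sF$, combined with flat base change $f^{(q)*}F^e_{Y*}\cong G^e_{f*}F^{e*}_f$ (using that $f$ is flat);
\item third, the relative Cartier operator $\varkappa^e\colon F^e_{f*}f^!\to f^{(q)!}$ of \autoref{Sec.RelCartierOperartor}.
\end{itemize}
I will check that this is compatible with composition in $\sC$, which reduces to the transitivity $\varkappa^{e+e'}=\varkappa^{e'}\circ F^{e'}_*\varkappa^e$. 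This transitivity follows from the local explicit formula of \autoref{le.explicitrelativecartierstructureregular}. The resulting functor is exact, because $f$ is flat and $\omega_f$ is invertible.

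The second step is to reduce the statement to a local, explicit claim. Test modules are compatible with restriction to opens and with nilpotence and quasi-isomorphism of Cartier crystals. Both $\uptau\circ f^!$ and $f^!\circ\uptau$ are subobjects of $f^!\sF$, so I construct the natural map as an inclusion in one direction and then prove equality locally. Take affine $X=\operatorname{Spec} S\to Y=\operatorname{Spec} R$ with a $p$-basis $x_1,\dots,x_n$. By Tyc's theorem this makes $F^e_*S$ free over $S^{(q)}=S\otimes_{R,F^e}R$ with basis the monomials $x^{\lambda}$, $0\le\lambda_i<q$. By the explicit description, $\varkappa^e$ sends $x^{\lambda}\,dx$ to $dx$ when $\lambda=(q-1,\dots,q-1)$ and to $0$ otherwise; that is, it is the trace generator of $\operatorname{Hom}_{S^{(q)}}(F^e_*S,S^{(q)})$ via $\alpha_e$. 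Consequently, the Cartier structure on $f^!M$ is $\psi(F^e_*(s\otimes m))=\sum_\lambda \phi(\,\cdot\,)$ applied coefficientwise: it is generated by $\phi\otimes\varkappa^e$.

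The third step is the two inclusions, using the characterization of $\uptau$ as the smallest Cartier submodule that agrees with $\sF$ generically on the relevant associated primes, modulo nilpotence, as in \autoref{def.testmodulesummary}. For $f^!\uptau(M)\subseteq\uptau(f^!M)$, I plan to show that $\uptau(f^!M)$ is of the form $f^!N$. Then $N$ is a Cartier submodule satisfying the non-triviality condition, because associated points of $f^*M$ lie over associated points of $M$ (flatness), and $f$ is faithfully flat onto its image locally. To show $\uptau(f^!M)=f^!N$, I use the surjectivity of $\varkappa^e$ and the basis: any Cartier submodule $P\subseteq f^!M$ satisfies $\sC_+P\supseteq f^!(\text{coefficient submodule})$. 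Applying $\psi$ to $x^{\mu}P$ extracts every coefficient of elements of $P$, so the stable image is generated by extended submodules. For the reverse inclusion, $f^!\uptau(M)$ is a Cartier submodule of $f^!M$. Its non-triviality holds because the generic points of $\operatorname{Ass}f^*M$ map to those of $\operatorname{Ass}M$ and regular fibers are reduced, so generic agreement pulls back. By minimality, $\uptau(f^!M)\subseteq f^!\uptau(M)$.

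I expect the main obstacle to be the coefficient-extraction argument in the third step. It needs $\psi(F^e_*(x^{\mu}\cdot -))$ to recover all coefficients uniformly in $e$, and it needs the non-triviality condition to descend along a non-finite-type map, where no Noether normalization is available. For the first point I would rely on the explicit formula for $\varkappa^e$. For the second, I would reduce to local rings at associated points and use faithful flatness of the local homomorphism.
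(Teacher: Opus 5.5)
\textbf{There is a genuine gap in the inclusion $f^!\uptau(M)\subseteq\uptau(f^!M)$.} Your other inclusion, $\uptau(f^!M)\subseteq f^!\uptau(M)$ by minimality, is the paper's argument. Its real inputs are \autoref{le.AssPrimesRegularMorphism} and \autoref{le.cartieractionshriek}: the associated primes of $f^!M$ are the minimal primes of $\p S$ for $\p\in\Ass M$, and nil-isomorphisms pull back.

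Coefficient extraction descends something to $M$ only when $S$ is free over $R$ on the monomials, i.e.\ for $S=R[x_1,\dots,x_n]$. For a general $p$-basis, $F^e_*S$ is free over $S\otimes_RF^e_*R$, not over $F^e_*R$. So the coefficients of an element of $f^!M$ lie in $S\otimes_RF^e_*M$, and applying $\psi$ to $F^e_*(x^\mu P)$ returns elements $\sum_i s_i\otimes\phi(F^e_*m_i)$ of $S\otimes_RM$, not elements of $M$. Along the pristine factor of $R\to R[t_1,\dots,t_n]\to S$ nothing is extracted; for $f$ étale, $n=0$ and there is nothing to extract at all.

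In fact your intermediate claim, that stable images of Cartier submodules of $f^!M$ are sums of extended submodules, is false. Let $k$ be perfect of characteristic $p>2$ and $R=k[t]$. Let $M=R$, with $\sC$ generated by $\phi(F_*r)=\kappa_R(F_*t^{p-1}r)$. Let $R\to S=k[x]_{2x-1}$, $t\mapsto x^2-x$, which is étale. Then $\phi^!(F_*s)=\kappa_S(F_*u\,x^{p-1}(x-1)^{p-1}s)$ for some $u\in S^\times$. Hence $xS$ is an $F$-pure Cartier submodule of $f^!M=S$. Yet every extended submodule contained in $xS$ lies in $f^!(xS\cap R)=tS=x(x-1)S\subsetneq xS$. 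Here $\uptau(f^!M)=tS$ is extended, as the theorem predicts. The point is that the fibre over the $F$-pure center $V(t)$ has two components, and Cartier submodules of $f^!M$ see them separately.

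So extendedness of $\uptau(f^!M)$ cannot come from a general descent principle; given your other inclusion, it is essentially the theorem itself. The paper instead proves that $F$-regularity ascends (\autoref{theo.Fregularregularpullback}). Equality then follows, since $\uptau(f^!M)\subseteq f^!\uptau(M)$ and the latter is $F$-regular.

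\begin{itemize}
\item \emph{Associated primes minimal.} The paper factors $f=g\circ h$ with $g\colon\mathbb{A}^n_R\to\Spec R$ and $h$ pristine (\autoref{co.regularpbasisfactorization}, \autoref{le.regularsmoothpristinefactorizationcompatible}). It then quotes the smooth case \cite[Lemma 4.6]{stablerunitftestmodules} and the pristine case \cite[Theorem A.3]{CarvajalRojasStablerPristineMorphisms}.
\item \emph{General case.} It chooses test elements $c_\p\in R$ such that, by excellence, $(S/\p S)_{c_\p}$ is normal. Hence it splits as a product of the $(S/\q_a)_{c_\p}$ over the minimal primes $\q_a$ of $\p S$, which is exactly the splitting in the example above. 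It concludes with \cite[Theorem 3.4]{BlickleStablerFunctorialTestModules} and \autoref{le.cartieractionshriek}.
\end{itemize}

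Your extraction argument could plausibly replace the smooth input, but the pristine input and this test-element bookkeeping are missing.

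Finally, your minimality step presupposes that $\uptau(f^!M)$ exists, which is not automatic. The paper assumes (a) or (b) of \autoref{cor.tauregulartransforms} and obtains existence from these test elements.
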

To test our theory, we generalize foundational results by P\'erez about the shape of mixed test ideals \cite{perezconstancyregions}. Pérez's results are about $\bF_q$-algebras of finite type, and his proofs rely on the base field being finite. It has not been possible yet to extend his proofs beyond this case although it is expected/conjectured that his results hold over any $F$-finite base field. Our formalism allows to ``pull back'' and so extend his results to suitable field extensions of finite fields. A special case is the following (in \autoref{sec.constancyregions} we have weaker assumptions on $\kay$).

\begin{theoremC*}[\autoref{theo.finitenessconstacyregions}, \autoref{theo.pfractal}]
Let $\kay$ be a finite extension of $\F_q(t_1, \ldots, t_k)$ and $R$ be a finite type $\kay$-algebra with ideals $\mathfrak{a}_1, \ldots, \mathfrak{a}_n$. Let $(M,\sC)$ be a Cartier module with $\sC$ a finitely generated Cartier algebra. For any $T \in \bR_{\geq 0}$,  the set
\[ \{\uptau(M, \sC, \mathfrak{a}_1^{t_1}, \ldots, \mathfrak{a}_n^{t_n}) \mid  0 \leq t_i \leq T\}\] is finite. Moreover, given $\bm{s} = (s_1, \ldots, s_n) \in \mathbb{R}^n$, the set
\[ \{ \bm{t} \in \mathbb{R}^n \mid \uptau(M, \sC, \mathfrak{a}_1^{t_1}, \ldots, \mathfrak{a}_n^{t_n}) = \uptau(M, \sC, \mathfrak{a}_1^{s_1}, \ldots, \mathfrak{a}_n^{s_n}) \} \]
is of the form $\varphi^{-1}(i)$ for some $i \in \mathbb{N}$, where $\varphi$ is a $p$-fractal function.
\end{theoremC*}

\subsection*{Acknowledgements} 
 The authors would like to thank Manuel~Blickle, Rankeya~Datta, Anne~Fayolle, Daniel~Fink,  Steffen~Fr\"ohlich, Sri Iyengar, Peter~McDonald, Daniel~Smolkin, and Maciej~Zdanowicz for very useful discussions in the preparation of this article.

\section{On Regular Morphisms}\label{sec.pristineandregular}

In the local study of morphisms of locally noetherian schemes, the archetype of what constitutes a good property requires a ``continuous variation'' on the fibers (e.g., flatness) and ``nonsingular'' fibers in some sense. The quintessential example is \emph{regularity.} A morphism of locally noetherian schemes is said to be \emph{regular} if it is flat and has geometrically regular fibers.  It turns out that this notion is encoded in the differential structure of the morphism and, more precisely, in its cotangent complex. To make this precise, let us consider the affine case given by a homomorphism $\theta \:R\to S$ of noetherian rings with cotangent complex $\mathbb{L}_{\theta}$. This reduction is justified by \cite[\href{https://stacks.math.columbia.edu/tag/07R8}{Tag 07R8}]{stacks-project}.

Then $\theta$ is regular if and only if its first André--Quillen homology with coefficients in $S$
\[
H_1(\theta,S) \coloneqq \mathbf{H}_1(\bL_{\theta}),
\]
vanishes and its zeroth one (i.e., the module of Kähler differentials) 
\[
H_0(\theta,S) \coloneqq \mathbf{H}_0(\bL_{\theta}) = \Omega_{\theta}
\] 
is flat as an $S$-module. See \cite[Proposition 5.6.2]{Majadas2010}.

Fortunately, there is a more down-to-earth way to express these homological conditions, namely the so-called \emph{jacobian criterion for regularity}.  It says that $\theta$ is regular if and only if for every (equivalently some) presentation
\[
0 \to I \to A \coloneqq R[x_i \mid i\in I] \to S \to 0
\]
of $\theta$ (as the quotient of a polynomial $A$-algebra) the canonical sequence of $S$-modules
\begin{equation} \label{eqn.IIFundamentalExactSequence}
    0 \to I/I^2 \xrightarrow{\delta} S \otimes_A \Omega_{A/R} \to \Omega_{\theta} \to 0
\end{equation}
is exact (i.e., $\delta$ is injective) and $\Omega_{\theta}$ is flat. See \cite[Propositions 1.4.6, 5.6.2]{Majadas2010}.

This shows that regular morphisms are closely related to formally smooth ones (in the noetherian case). Indeed, with $\theta$ as above, we have the following two equivalent ways to think about its formal smoothness (see \cite[Theorem 2.3.1]{Majadas2010}):
\begin{enumerate}
    \item $\theta$ is regular and $\Omega_{\theta}$ is a projective $B$-module.
    \item (Jacobian criterion for formal smoothness) For every $A$-algebra presentation $B=S/I$ as above, the homomorphism $\delta \: I/I^2 \to S \otimes_A \Omega_{A/R}$ admits a $B$-linear retraction (i.e., the sequence \autoref{eqn.IIFundamentalExactSequence} is split exact). 
\end{enumerate}

In particular, for \emph{kählerian morphisms} of locally noetherian schemes (i.e.\,those for which the cotangent sheaf is coherent), regularity and formal smoothness are the exact same notion. These are exactly the morphisms we are going to work with here.  

Observe that essentially of finite type morphisms of locally noetherian schemes are kählerian. However, in characteristic zero, they might be hard to find beyond this case. For example, $\bC[x] \to \bC \llbracket x\rrbracket$ is not kählerian. Nevertheless, $\bF_p[x] \to \bF_p \llbracket x\rrbracket$ is kählerian and, in fact, kählerian morphisms are much more abundant in positive characteristics. To understand why, we need to bring to scene the \emph{Frobenius morphisms} of a map.
  
Let $f \: X \to Y$ be a morphism of locally noetherian schemes. Its \emph{$e$-th (relative) Frobenius morphism} is the map $F_f^e \: X \to X^{(q)}$ defined by the following cartesian diagram.
\begin{equation}\label{relFrobenius} \xymatrixcolsep{5pc}\xymatrixrowsep{3pc}\xymatrix{
X \ar@/_/[ddr]_-{f} \ar@/^/[drr]^-{F^e_X} \ar@{.>}[dr]|-{F^e_{f}}\\
&X^{(q)} \ar[d]_-{f^{(q)}} \ar[r]^-{G^e_f}  & X\ar[d]^-{f} \\
&Y \ar[r]^-{F^e_Y} &Y}
\end{equation}
In the affine case $f = \Spec \theta \: \Spec S \to \Spec R$, we write
\[
F_{\theta}^e = F^e_{S/R} \: S_{F^e_* R} \coloneqq S \otimes_R F^e_* R \to F^e_* S, \quad s \otimes F^e_*r\mapsto F^e_* s^qr.
\]
For further details on Frobenius morphisms, we invite the reader to see \cite[\S2]
{CarvajalRojasStablerPristineMorphisms}. However, there is one key principle that we will use several times, and so it is worth isolating. 

\begin{remark}[Iterability of Frobenius] \label{rem.IterabilityOfFrobenius}
   As explained in \cite[p. 4]{CarvajalRojasStablerPristineMorphisms}, we have that
    \begin{equation} \label{eqn.Iterability1}
         F_f^{e+1}=F_{f^{(q)}} \circ \cdots \circ F_{f^{(p)}} \circ F_f = F_{f^{(q)}} \circ F_f^e
    \end{equation}
    and more generally
 \begin{equation} \label{eqn.Iterability2}
       F^{e+e'}_f = F^{e'}_{f^{(q)}} \circ F^e_f = F^{e}_{f^{(q')}} \circ F^{e'}_f.
    \end{equation}
Thus, in iterating the relative Frobenius, one has to use the Frobenius maps of $f^{(q)}$ accordingly. The same formulas hold with $G$ in place of $F$.
\end{remark}

The first key point is that 
$
\Omega_f = \Omega_{F^e_f}
$ and so $f$ is kählerian if $F^e_f$ is a finite map (for some/equivalently all $e \geq 1$). Following Hashimoto \cite{HashimotoCMFinjectiveHoms}, we say that $f$ is \emph{$F$-finite} if this is the case. This includes the case in which $F_f$ is an isomorphism, of which there are many maps, as studied in \cite{CarvajalRojasStablerPristineMorphisms}. Since we work with locally noetherian schemes, we call such maps \emph{pristine}. Moreover, if $X$ is $F$-finite then so is $f$, and the converse holds provided that $Y$ is $F$-finite. In particular, $X$ being $F$-finite would ensure that $f$ is kählerian. It should be noted that the use of $F$-finitenes for schemes is a mild hypothesis. It implies excellence, and in the local case, the converse is true provided that the residue field is $F$-finite (e.g. perfect) \cite[Theorem 10.5]{polstramafsingularities}.

Conversely, a result of Fogarty states that $f$ is kählerian if and only if it is $F$-finite; see \cite[Corollary 11.4]{polstramafsingularities}. These sorts of equivalences between differentials and Frobenius run deeper. Crucial for us is the relative Kunz theorem of Radu--André (\cite{RaduRelativeKunz,AndreRelativeKunz}, \cf \cite[\S10]{polstramafsingularities}). It states that $f$ is regular if and only if $F^e_f$ is flat for all $e>0$. Moreover, in either case $X^{(q)}$ is a locally noetherian scheme.

We will work with regular $F$-finite morphisms of locally noetherian schemes. We sum up their  key properties as follows.

\begin{proposition}
\label{ffiniteregularisfsmooth}
Let $f\colon X \to Y$ be a morphism of locally noetherian schemes. The following statements are equivalent.
\begin{enumerate}
    \item $f$ is a regular $F$-finite morphism,
    \item $f$ is a formally smooth map and $\Omega_f$ is a locally free coherent sheaf.
    \item $F_f^e \: X \to X^{(q)}$ is a finite flat map (of locally noetherian schemes).
\end{enumerate}
\end{proposition}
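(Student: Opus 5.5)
The plan is to reduce everything to the affine case $f=\Spec\theta$, $\theta\colon R\to S$, since all three conditions are local on $X$ and $Y$. Then I would prove the equivalences by citing the results recalled just before the statement: Fogarty's theorem (kählerian $\iff$ $F$-finite), the relative Kunz theorem of Radu--André (regular $\iff$ $F^e_f$ flat for all $e$), and the characterization of formal smoothness of noetherian maps as "regular with $\Omega_\theta$ projective" \cite[Theorem 2.3.1]{Majadas2010}.

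(1)$\Leftrightarrow$(3). If $f$ is regular and $F$-finite, then $F^e_f$ is finite for all $e$ by definition. It is flat for all $e$ by Radu--André, and $X^{(q)}$ is locally noetherian by the same theorem. Conversely, suppose $F^e_f$ is finite and flat. Here I read (3) as holding for all $e>0$; alternatively it holds for $e=1$ and one iterates via \autoref{rem.IterabilityOfFrobenius}. In that case $f$ is $F$-finite by definition and regular by Radu--André.

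One subtlety concerns the case where (3) is given only for $e=1$. Then I must show that $f^{(p)}$ inherits regularity and $F$-finiteness, so that $F^{e}_f=F_{f^{(q/p)}}\circ\cdots\circ F_f$ is a composite of finite flat maps. This follows because $f^{(p)}$ is the base change of $f$ along $F_Y$, and $F_{f^{(p)}}$ is a base change of $F_f$ along $G_f$. I expect this bookkeeping to be the only place requiring care in this equivalence.

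(1)$\Leftrightarrow$(2). Assume (1). $F$-finiteness gives that $\Omega_f=\Omega_{F_f}$ is coherent, since $F_f$ is finite and hence of finite type. Regularity gives that $\Omega_f$ is flat by the André--Quillen characterization \cite[Proposition 5.6.2]{Majadas2010}. A flat coherent sheaf on a locally noetherian scheme is locally free, so $\Omega_\theta$ is a finitely generated flat, hence projective, $S$-module. By \cite[Theorem 2.3.1]{Majadas2010}, $\theta$ is then formally smooth. Conversely, assume (2). Formal smoothness implies regularity by the same theorem. Coherence of $\Omega_f$ means $f$ is kählerian, so Fogarty's theorem \cite[Corollary 11.4]{polstramafsingularities} gives $F$-finiteness.

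The main obstacle is making sure the cited theorems apply in this generality. Fogarty's theorem and the identification $\Omega_f=\Omega_{F^e_f}$ are needed for non-finite-type $f$. Majadas' formal smoothness criterion requires the noetherian hypothesis, so I should formulate formal smoothness with the discrete topology rather than the adic one. If that criterion turns out to need $\Omega_\theta$ projective and not merely locally free, I would use that a finitely generated locally free module over a noetherian ring is projective, which settles it.
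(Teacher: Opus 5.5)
Your proposal is correct and matches the paper's own argument. The paper gives no separate proof; it assembles exactly these ingredients in the paragraphs just before the statement: $\Omega_f=\Omega_{F^e_f}$, Fogarty's k\"ahlerian $\Leftrightarrow$ $F$-finite, the Radu--Andr\'e relative Kunz theorem (including local noetherianity of $X^{(q)}$), and Majadas' characterization of formal smoothness as regularity plus projectivity of $\Omega_\theta$. One harmless slip in your $e=1$ bookkeeping: $F_{f^{(p)}}$ is the base change of $F_f$ along $G_{f^{(p)}}\colon X^{(p^2)}\to X^{(p)}$, not along $G_f$ (which is the resulting top arrow of the cartesian square).
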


\begin{corollary}[Stability under base change]
\label{co.ffiniteregularbasechange}
Let $f\: X \to Y$ be a regular $F$-finite morphism of locally noetherian schemes. If $Z \to Y$ is a base change with $Z$ and $X \times_Y Z$ locally noetherian , then $f'\:X \times_Y Z \to Z$ is a regular $F$-finite morphism.
\end{corollary}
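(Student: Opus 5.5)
The natural approach is through characterization (2) of \autoref{ffiniteregularisfsmooth}: $f$ is formally smooth and $\Omega_f$ is a locally free coherent sheaf. Both of these conditions are stable under arbitrary base change, and no noetherian hypothesis is needed for that stability. Concretely, the plan is as follows.

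First, formal smoothness is preserved under base change. This is standard; it follows directly from the lifting property, since a lifting problem for $f'$ against a square-zero thickening over $Z$ becomes, after composing with the projection $X\times_Y Z\to X$, a lifting problem for $f$ over $Y$. Second, writing $g\colon X\times_Y Z\to X$ for the projection, the cotangent sheaf satisfies $\Omega_{f'}\cong g^*\Omega_f$. Pullback preserves local freeness and coherence, since it preserves finite local presentations. Since $X\times_Y Z$ is assumed locally noetherian, applying the direction (2)$\Rightarrow$(1) of \autoref{ffiniteregularisfsmooth} to $f'$ then shows that $f'$ is regular and $F$-finite.

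As a cross-check, one can instead use (3). Relative Frobenius commutes with base change: the square formed by $F_f$ and $F_{f'}$, that is, with $X'^{(p)}=X^{(p)}\times_Y Z$ and $F_{f'}=F_f\times_Y Z$, is cartesian. This follows from the universal properties in diagram \autoref{relFrobenius}. Finite flat maps are stable under base change, so $F_{f'}$ is finite flat. The same holds for every $F^e_{f'}$, and then (3)$\Rightarrow$(1) applies. It also requires $X'^{(q)}$ to be locally noetherian; this is provided by Radu--André once $F_{f'}$ is flat, or it can be checked directly because $X'^{(q)}$ is a base change along $F^e_Z$.

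The main subtlety is which form of \autoref{ffiniteregularisfsmooth} requires what noetherian hypotheses. The proposition is stated for locally noetherian schemes, which is exactly why the corollary assumes that $Z$ and $X\times_Y Z$ are locally noetherian: that assumption places $f'$ in the setting of the proposition. I expect no real obstacle beyond verifying that the base change identifications ($\Omega$ and relative Frobenius) are compatible with these hypotheses.
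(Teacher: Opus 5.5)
Your main argument is correct and is essentially the paper's. Both rest on the stability of formal smoothness under base change together with \autoref{ffiniteregularisfsmooth}. The only difference is where the finiteness of $f'$ comes from: the paper obtains $F$-finiteness of $f'$ from Hashimoto's base-change lemma (exactly your cross-check that $F_{f'}$ is a base change of $F_f$), while your main route uses $\Omega_{f'}\cong g^*\Omega_f$ and lets the proposition (via Fogarty's theorem) turn coherence of $\Omega_{f'}$ into $F$-finiteness.

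One small correction in the cross-check: local noetherianity of $X'^{(q)}$ does not follow merely from its being a base change along $F^e_Z$. It does follow by faithfully flat descent along the finite, flat, surjective map $F^e_{f'}$. In any case, Radu--Andr\'e only needs $f'$ to be a morphism of locally noetherian schemes.
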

\begin{proof}
By \cite[Lemma 2 (5)]{HashimotoFfinitenessAndItsdescent}, $f'$ is $F$-finite. Thus, by \autoref{ffiniteregularisfsmooth}, regularity and formal smoothness coincide for $f'$, but the latter is stable under base change (\cite[\href{https://stacks.math.columbia.edu/tag/02H2}{Lemma 02H2}]{stacks-project}).
\end{proof}

\subsection{Differential bases vs $p$-bases} To conclude our remarks on regular $F$-finite morphisms of locally noetherian schemes, we recall the notion of differential basis and how it can be used to construct a simultaneous trivialization of the locally free coherent sheaves $\Omega_f$ and $F^e_{f*} \sO_X$. 

    Let $\theta \: R \to S$ be an $R$-algebra and $\Gamma = \{x_{\lambda} \mid \lambda \in \Lambda\} \subset S$ be an indexed subset of $S$. The set $\Gamma$ is a \emph{$p$-basis} for $\theta$ if the homomorphism of $S_{F_*R}$-algebras 
    \[
    S_{F_*R}[t_{\lambda} \mid \lambda \in 
    \Lambda]/\langle t_{\lambda}^p- x_{\lambda} \otimes 1 \mid \lambda \in \Lambda \rangle \xrightarrow{t_{\lambda} \mapsto F_* x_{\lambda}} F_* S
    \]
    is an isomorphism. Let $I$ denote the set of functions $\bm{i}\: \Lambda \to {[0,p-1] \cap \bN}$, which we denote as $\lambda \mapsto i_{\lambda}$, whose values are all zero except possibly for a finite number of elements in $\Lambda$. Also, let $\bm{x}^{\bm{i}} \coloneqq \prod_{\lambda \in \Lambda} x_{\lambda}^{i_{\lambda}}$ be the corresponding monomial. Then, $\Gamma$ is a $p$-basis if and only if $\{F_* \bm{x}^{\bm{i}} \mid \bm{i} \in I\}\subset F_* S$ is a basis for $F_*S$ as an $S_{F_*R}$-module. When $\Gamma=\{x_1,\ldots,x_n\}$ is a finite set, this can be expressed more succinctly as a direct sum decomposition.
    \[
    F_*S = \bigoplus_{0 \leq i_1,\ldots,i_n \leq p-1} S_{F_* R} F_* x_1^{i_1} \cdots x_n^{i_n}.
    \]
    Moreover, if we consider the corresponding dual basis $\{(F_* x_1^{i_1} \cdots x_n^{i_n})^{\vee}\}_{0 \leq i_1,\ldots,i_n \leq p-1}$, then 
    \[
    \Phi_{\theta} \coloneqq \Phi_{\theta,\Gamma} \coloneqq (F_* x_1^{p-1} \cdots x_n^{p-1})^{\vee}
    \]
    is a free generator of the $F_*S$-module $\Hom_{S_{F_*R}}(F_*S,S_{F_*R})$ as
    \[
    (F_* x_1^{i_1} \cdots x_n^{i_n})^{\vee} = \Phi_{\theta} \cdot F_* x_1^{p-1-i_1} \cdots x_n^{p-1-i_n}.
    \]
    
    On the other hand, one says that $\Gamma$ is a \emph{differential basis}, or a \emph{$d$-basis} for short, for $\theta$ if
    \[
    S^{\oplus \lambda} \xrightarrow{e_{\lambda} \mapsto \mathrm{d}x_{\lambda}} \Omega_{\theta}
    \]
is an isomorphism of $S$-modules. In other words, $\{\mathrm{d}x
_{\lambda} \mid {\lambda} \in \Lambda \} \subset \Omega_{\theta}$ is an $S$-basis. 

\begin{proposition}
\label{pro.fsmoothdbasisfactorization}
    Let $\theta\: R\to S$ be a formally smooth algebra. It admits a differential basis if and only if it factors as $R \to T \to S$
    where $\sigma \: R \to T$ is a polynomial $R$-algebra and $\rho\:T \to S$ is formally \'etale.
\end{proposition}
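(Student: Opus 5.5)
We prove the two implications separately, using only the formal smoothness of $\theta$ together with the standard behaviour of Kähler differentials under composition. Throughout, the fact to exploit is that a formally étale map $\rho\colon T\to S$ satisfies $\Omega_\rho=0$ and $H_1(\rho,S)=0$. Hence the first fundamental sequence gives an isomorphism $S\otimes_T\Omega_{T/R}\xrightarrow{\sim}\Omega_{S/R}$.

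For ``$\Leftarrow$'', assume $\theta=\rho\circ\sigma$ with $T=R[t_\lambda\mid\lambda\in\Lambda]$ and $\rho$ formally étale. Since $\rho$ is formally smooth, the first fundamental sequence
\[
0\to S\otimes_T\Omega_{T/R}\to\Omega_{S/R}\to\Omega_{S/T}\to 0
\]
is exact on the left, and it is split exact. Formal unramifiedness gives $\Omega_{S/T}=0$. Therefore $S\otimes_T\Omega_{T/R}\cong\Omega_\theta$. Since $\Omega_{T/R}$ is free on the $\mathrm{d}t_\lambda$, the elements $x_\lambda\coloneqq\rho(t_\lambda)$ form a differential basis of $\theta$.

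For ``$\Rightarrow$'', let $\{x_\lambda\}$ be a differential basis. Put $T=R[t_\lambda]$ and let $\rho\colon T\to S$ send $t_\lambda\mapsto x_\lambda$, so that $\theta=\rho\circ\sigma$. The map $S\otimes_T\Omega_{T/R}\to\Omega_\theta$ sends $\mathrm{d}t_\lambda\mapsto\mathrm{d}x_\lambda$, so it is an isomorphism. Exactness of $S\otimes_T\Omega_{T/R}\to\Omega_{S/R}\to\Omega_{S/T}\to0$ then gives $\Omega_\rho=0$, so $\rho$ is formally unramified.

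It remains to show that $\rho$ is formally smooth. We use the Jacobi--Zariski sequence in André--Quillen homology:
\[
H_1(\theta,S)\to H_1(\rho,S)\to S\otimes_T\Omega_{T/R}\to\Omega_{\theta}\to\Omega_\rho\to0.
\]
Here $H_1(\theta,S)=0$ because $\theta$ is formally smooth, hence regular. The third map is injective because it is the isomorphism just shown. Hence $H_1(\rho,S)=0$.

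Combined with $\Omega_\rho=0$, which is flat, this shows that $\rho$ is regular by the criterion recalled above (\cite[Proposition 5.6.2]{Majadas2010}). Moreover $\Omega_\rho=0$ is projective, so $\rho$ is formally smooth by \cite[Theorem 2.3.1]{Majadas2010}, and being formally unramified it is formally étale.

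The main obstacle is a noetherian caveat. The results cited from \cite{Majadas2010} characterise regularity of homomorphisms between noetherian rings, but $T$ is a polynomial ring in possibly infinitely many variables and so need not be noetherian.

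I would first try to avoid the regularity notion altogether and argue with the criterion of formal smoothness via the cotangent complex. For arbitrary rings, $\rho$ is formally smooth if and only if $H_1(\rho,S)=0$ and $\Omega_\rho$ is projective. Equivalently, one can use $\mathbb{L}_\rho\simeq0$ in degrees $\le1$, which follows from the exact triangle $S\otimes^{\mathbb{L}}_T\mathbb{L}_{T/R}\to\mathbb{L}_{S/R}\to\mathbb{L}_{S/T}$ together with $\mathbb{L}_{T/R}=\Omega_{T/R}[0]$.

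If that is insufficient, the fallback is a direct lifting argument. Given a square-zero extension $C\to C/J$ and a $T$-algebra map $S\to C/J$, first lift it to an $R$-map $S\to C$ using the formal smoothness of $\theta$. The discrepancy on the $t_\lambda$ is a derivation $T\to J$. Because $\{\mathrm{d}x_\lambda\}$ is a basis of $\Omega_\theta$, it extends uniquely to an $R$-derivation $S\to J$. Subtracting this derivation corrects the lift to a $T$-algebra map, and uniqueness of the correction gives formal unramifiedness.
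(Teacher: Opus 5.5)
Your proof is correct and is essentially the paper's argument. Using $H_1(\theta,S)=0$, the Jacobi--Zariski sequence $0\to H_1(\rho,S)\to S\otimes_T\Omega_{T/R}\to\Omega_\theta\to\Omega_\rho\to 0$ shows that $\{x_\lambda\}$ is a differential basis exactly when $H_1(\rho,S)=0=\Omega_\rho$, i.e.\ exactly when $\rho$ is formally \'etale.

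The detour through regularity is unnecessary. The criterion ``formally smooth iff $H_1=0$ and $\Omega$ projective'' holds for arbitrary rings, and the paper uses it implicitly. So the noetherian caveat you raise is resolved exactly as in your first fallback.
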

\begin{proof}
    Write $T=R[t_{\lambda} \mid {\lambda}\in \Lambda]$ and let the $R$-algebra homomorphism $\rho\:T\to S$ be given by $t_{\lambda} \mapsto x_{\lambda} \in S$. Since $\theta$ is formally smooth, $H_1(\theta,S)=0$, and we have the exact sequence
    \[
    0 \to H_1(\rho,S) \to S \otimes_T\Omega_{\sigma}=\bigoplus_{\lambda\in \Lambda} S \mathrm{d}t_{\lambda} \xrightarrow{\mathrm{d}t_{\lambda} \mapsto \mathrm{d}x_{\lambda}} \Omega_{\theta} \to \Omega_{\rho} \to 0.
    \]
    Then, $\Gamma=\{x_{\lambda} \mid {\lambda}\in \Lambda\} \subset S$ is a $d$-basis for $\theta$ if and only if $H_1(\rho,S)=0$ and $\Omega_{\rho}=0$, i.e., $\rho\: T \to S$ is formally étale.
\end{proof}

One readily verifies by direct calculation that a $p$-basis must be a differential basis. The converse is much more difficult but holds if $F_{\theta}$ is a homomorphism of noetherian rings, as established by Ma–Polstra in \cite[Theorem 11.5]{polstramafsingularities} after Tyc's work \cite{TycDifferentialBasisAndp-basis}. For future reference, we record this fundamental result as follows.

\begin{theorem}[Tyc theorem]
\label{pbasisdiffbasisequivalence}
Let $\theta \colon R \to S$ be a regular homomorphism of noetherian rings and $\Gamma\subset S$ as above. Then, $\Gamma$ is a $p$-basis if and only if it is a $d$-basis.
\end{theorem}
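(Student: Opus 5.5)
The easy direction is formal: a $p$-basis is always a $d$-basis. So the work is to show that a $d$-basis $\Gamma=\{x_\lambda\}$ of the regular homomorphism $\theta$ is a $p$-basis. By \autoref{ffiniteregularisfsmooth}, $F_\theta\colon S_{F_*R}\to F_*S$ is flat and $S_{F_*R}$ is noetherian.

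\textbf{Easy direction.} If $\Gamma$ is a $p$-basis, then $F_*S$ is the free $S_{F_*R}$-algebra generated by $p$-th roots of the $x_\lambda\otimes 1$. Since $\Omega_\theta=\Omega_{F_\theta}$, we can compute $\Omega$ from this presentation: it is $\Omega_{S_{F_*R}[t_\lambda]}$ modulo the relations $d(t_\lambda^p-x_\lambda\otimes1)=0$, tensored down. These relations vanish because $pt_\lambda^{p-1}=0$ and $x_\lambda\otimes 1$ is constant over $S_{F_*R}$. Hence $\Omega_\theta$ is free on the $dt_\lambda=dx_\lambda$.

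\textbf{Hard direction: a spanning set.} Let $\Gamma$ be a $d$-basis and put $B\coloneqq S_{F_*R}$ and $C\coloneqq F_*S$. Consider the $B$-subalgebra $C'\subset C$ generated by the $F_*x_\lambda$; it is spanned by the monomials $F_*\bm{x}^{\bm i}$ with $\bm i\in I$.

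First I show $C'=C$. Note that $C$ is generated over $B$ by $F_*S$, and $C^p\subseteq B$ in the appropriate sense. Since the $dx_\lambda$ generate $\Omega_{C/B}=\Omega_\theta$, we have $\Omega_{C/C'}=0$. A purely inseparable extension of exponent one with vanishing differentials must be trivial. For modules I would argue this locally after reducing modulo maximal ideals of $B$ using Nakayama: $C$ is a finite $B$-module, because $\theta$ is $F$-finite as in the Fogarty/Hashimoto discussion. Over a field $k=B/\mathfrak m$, the algebra $C\otimes k$ is a finite local-ish algebra with $c^p\in k$ for all $c$. Such an algebra is generated by a $p$-independent set that is exactly detected by differentials, by the classical field/Artin-algebra theory of $p$-bases. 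So the images of the $x_\lambda$ generate, and Nakayama lifts this to $C'=C$.

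\textbf{Hard direction: independence.} It remains to show the monomials are $B$-linearly independent. Since $C$ is finite flat over noetherian $B$, it is locally free. It therefore suffices to compare ranks fibrewise, i.e.\ over each residue field $k$ of $B$. There $C\otimes k$ has $k$-dimension equal to $p^{r}$, where $r=\dim(\Omega_{C/B}\otimes k)$ equals the number of $x_\lambda$ (in particular $\Gamma$ is locally finite). This is again the field/Artin case of the $p$-basis theorem: a $p$-independent generating set yields $k$-dimension $p^{r}$. Hence the surjection $\bigoplus_{\bm i}B\,F_*\bm x^{\bm i}\to C$ is a surjection of free modules of equal rank locally, thus an isomorphism.

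\textbf{Main obstacle.} I expect the main obstacle to be the fibrewise Artin-algebra statement. When $C\otimes k$ is not a field, one must show that for an algebra $A$ over a field $k$ with $a^p\in k$ for all $a$, elements whose differentials form a basis of $\Omega_{A/k}$ give $\dim_k A=p^{r}$. I would prove this by induction on $r$, adjoining one $p$-th root at a time. The crucial input is that flatness (regularity, via Radu--André) forces each step to be a free extension of rank $p$, ruling out nilpotent degeneracies.
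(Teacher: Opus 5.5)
The paper does not prove this directly. It notes that regularity makes $S_{F_*R}$ noetherian and identifiable with $R[S^p]\subseteq S$, and then quotes Ma--Polstra's form of Tyc's theorem. Your direct argument has two genuine gaps.

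\textbf{First gap: you only treat the $F$-finite case.} You assume $\theta$ is $F$-finite, so that $C=F_*S$ is a finite $B=S_{F_*R}$-module and Nakayama and the rank count apply. The statement, however, allows an arbitrary regular $\theta$ and an arbitrary indexed $\Gamma$. Fogarty's theorem gives $F$-finiteness only when $\Gamma$ is finite, since then $\Omega_\theta$ is finitely generated. Your later remark that $\Gamma$ is ``locally finite'' is therefore circular. Consider an infinite $d$-basis, e.g.\ $\F_p\to\F_p(t_1,t_2,\dots)$. Nothing in your proposal shows that the $F_*x_\lambda$ generate $F_*S$ over $B$ in that case. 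This spanning statement, without finiteness, is the real content of Tyc's theorem, and it is where the noetherianity of $S_{F_*R}$ is used.

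\textbf{Second gap: the fibrewise count is unproved.} Even for finite $\Gamma$, you leave the step you call the main obstacle unproved, and the mechanism you propose cannot work. Over a field every extension is flat, and flatness of $B\to C$ says nothing about the intermediate algebras $k[\bar x_1,\dots,\bar x_j]$.

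For instance, $A=k[s,t]/(s^p,t^p,st)$ is generated by $s,t$ and satisfies $A^p\subseteq k$, yet $\dim_k A=2p-1\neq p^2$. What fails there is that $\Omega_{A/k}$ is not free on $\mathrm{d}s,\mathrm{d}t$, because of the relation $t\,\mathrm{d}s+s\,\mathrm{d}t=0$.

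\textbf{The missing input.} What you need are the $B$-derivations $D_\lambda$ of $C$ dual to the basis $\{\mathrm{d}x_\lambda\}$. With them, no fibrewise count is needed:
\begin{enumerate}
\item Granting spanning, write $C=P/J$ with $P=B[X_\lambda]/(X_\lambda^p-x_\lambda\otimes 1)$, which is $B$-free on the monomials $\bm{X}^{\bm{i}}$, $\bm{i}\in I$.
\item The maps $D_\lambda\circ\pi$ and $\pi\circ\partial_{X_\lambda}$ are $B$-derivations $P\to C$ that agree on the $X_\mu$. Hence $J$ is stable under every $\partial_{X_\lambda}$.
\item Suppose $0\neq g=\sum_{\bm{i}} a_{\bm{i}}\bm{X}^{\bm{i}}\in J$, and let $\bm{i}_0$ have maximal total degree among the $\bm{i}$ with $a_{\bm{i}}\neq 0$. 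Then $\prod_\lambda\partial_{X_\lambda}^{(i_0)_\lambda}g=\bigl(\prod_\lambda (i_0)_\lambda!\bigr)\,a_{\bm{i}_0}$, and this factorial is a unit in $\F_p$.
\item So $a_{\bm{i}_0}$ lies in $\ker(B\to C)=\ker F_\theta$. This kernel is zero because $F_\theta$ is faithfully flat (flat by Radu--Andr\'e and surjective on spectra), which gives a contradiction.
\end{enumerate}
This argument works for any $\Gamma$, so in general the only hard part is spanning.

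\textbf{Spanning in the finite case.} Replace the appeal to a ``classical theory of $p$-bases of Artin algebras'' with a precise argument. For example, when $\theta$ is $F$-finite, the inclusion $C'\to C$ is finite, unramified ($\Omega_{C/C'}=0$) and radicial ($C^p\subseteq C'$). Hence it is a monomorphism and therefore a closed immersion, so $C'=C$.
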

\begin{proof}
Since $\theta$ is regular, the ring $S_{F_\ast R}$ is noetherian and it may be identified with its image $R[S^p]$ in $S$. Now apply \cite[Theorem 11.5]{polstramafsingularities}.
\end{proof}

\begin{corollary} \label{cor.StandardCartierOperators}
    Let $\theta \colon R \to S$ be a regular homomorphism of noetherian rings that admits a $d/p$-basis $x_1,\ldots,x_n \in S$. Then we have a direct sum decomposition
    \[
     F^e_*S = \bigoplus_{0 \leq i_1,\ldots,i_n \leq q-1} S_{F^e_* R} F^e_* x_1^{i_1} \cdots x_n^{i_n}.
    \]
    and the $F^e_*S$-module $\Hom_{S_{F^e_*R}}(F^e_*S,S_{F^e_*R})$ is freely generated by
    \[
    \Phi^e_{\theta} \coloneqq \Phi^e_{\theta,\Gamma} \coloneqq (F^e_* x_1^{q-1} \cdots x_n^{q-1})^{\vee}.
    \]
\end{corollary}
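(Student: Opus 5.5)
The plan is to pass from the case $e=1$, which is the definition of a $p$-basis together with \autoref{pbasisdiffbasisequivalence}, to arbitrary $e$ by iterating Frobenius as in \autoref{rem.IterabilityOfFrobenius}.

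\textbf{Base change of the $p$-basis.} First I check that the images of the $x_i$ again form a $p$-basis for the base change $\theta^{(p)}\colon F_*R \to S_{F_*R}$. This algebra is regular by \autoref{co.ffiniteregularbasechange}. The differentials $\mathrm{d}(x_i\otimes 1)$ form a basis of $\Omega_{\theta^{(p)}} = S_{F_*R}\otimes_S \Omega_\theta$. So the $x_i\otimes 1$ are a $d$-basis, and hence a $p$-basis by \autoref{pbasisdiffbasisequivalence}.

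\textbf{Induction on $e$.} By \eqref{eqn.Iterability2}, $F^{e}_{\theta}$ factors as
\[
S_{F^e_*R} \to F_*\bigl(S_{F^{e-1}_*R}\bigr) \to F^e_* S,
\]
where the second map is $F_*$ applied to $F^{e-1}_\theta$. By induction, $F^{e-1}_*S$ is free over $S_{F^{e-1}_*R}$ with basis $F^{e-1}_* \bm{x}^{\bm{j}}$, $0\le j_k\le p^{e-1}-1$. Applying $F_*$ gives a basis of $F^e_*S$ over $F_*(S_{F^{e-1}_*R})$. By the base-change step applied to $\theta^{(p^{e-1})}$ (equivalently, iterating), $F_*(S_{F^{e-1}_*R})$ is free over $S_{F^e_*R}$ with basis $F_*(\bm{x}^{\bm{i}}\otimes 1)$, $0\le i_k\le p-1$.

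\textbf{Combining the exponents.} Composing the two bases gives the basis $F^e_* \bm{x}^{\bm{i}+p\bm{j}}$. Under this identification $F_*(x\otimes 1)$ corresponds to $F^e_* x^{p^{e-1}}$, so the exponent of the $i$-part must be tracked with the correct power. After relabeling the order of the two factors, the exponents $i_k + p j_k$ (or $j_k + p^{e-1} i_k$) range exactly over $[0,q-1]$. This gives the claimed direct sum decomposition.

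\textbf{The generator $\Phi^e_\theta$.} With the decomposition in hand, $\Hom_{S_{F^e_*R}}(F^e_*S, S_{F^e_*R})$ is free over $S_{F^e_*R}$ on the dual basis. It remains to show the identity
\[
(F^e_*\bm{x}^{\bm{i}})^\vee = \Phi^e_\theta\cdot F^e_*\bm{x}^{\bm{q-1-i}}.
\]
To check it, evaluate the right side on $F^e_*\bm{x}^{\bm{j}}$; this gives $\Phi^e_\theta(F^e_*\bm{x}^{\bm{q-1-i+j}})$. If $\bm{j}=\bm{i}$, the value is $1$. If some $j_k>i_k$, write $x_k^{q+m} = x_k^q x_k^m$ with $m<i_k$ and pull $x_k^q$ out as $x_k\otimes 1$; the remaining monomial is not $\bm{x}^{\bm{q-1}}$, so the value is $0$. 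If some $j_k<i_k$, the exponent is below $q-1$ in that coordinate, so the value is again $0$.

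It follows that $\Phi^e_\theta\cdot F^e_*(-)$ maps the basis onto the dual basis, hence is an isomorphism $F^e_*S\to\Hom$. So $\Hom$ is freely generated by $\Phi^e_\theta$ as an $F^e_*S$-module.

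The main obstacle is the bookkeeping in the composition step. The iterated Frobenius of the base-changed map must be identified correctly with $F^e_\theta$, which uses \autoref{rem.IterabilityOfFrobenius}, and the mixed-radix exponents must be seen to cover $[0,q-1]$ exactly once.
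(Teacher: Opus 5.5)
Your proposal is correct and takes the paper's route: base-change invariance of the $d$-basis plus Tyc's theorem gives a $p$-basis for each $\theta^{(p^{e'})}$, and iterating the relative Frobenius via \autoref{rem.IterabilityOfFrobenius} gives the decomposition (with exponents $\bm{j}+p^{e-1}\bm{i}$ for the factorization you wrote), while your dual-basis check supplies the generator statement that the paper leaves implicit. One small slip: when $j_k>i_k$ the leftover exponent is $m=j_k-i_k-1\le q-2$, not $m<i_k$, and it is $m\le q-2$ that guarantees the reduced monomial differs from $\bm{x}^{\bm{q-1}}$ (after also reducing any other coordinates that are at least $q$).
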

\begin{proof}
    Since being a $d$-basis is invariant under base change, $x_1,\ldots, x_n \in S_{F^e_* R}$ is a $d/p$-basis for $f^{(q)}$ for all $e$. The rest then follows by using \autoref{eqn.Iterability1} with $f = \Spec \theta$.
\end{proof}

\begin{corollary}
\label{co.regularpbasisfactorization}
   With notation as in \autoref{cor.StandardCartierOperators},
   $\theta$ admits a factorization of $R$-algebras $R \to T=R[t_1,\ldots,t_n] \to S$ where $T \to S$ is pristine.
\end{corollary}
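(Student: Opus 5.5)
We take $T = R[t_1,\ldots,t_n]$ with $\sigma\: R \to T$ the inclusion and define $\rho\: T \to S$ by $t_i \mapsto x_i$, so that $\theta = \rho \circ \sigma$. We must show that $F_\rho \: S_{F_*T} = S \otimes_T F_*T \to F_*S$ is an isomorphism. Two inputs drive the argument. First, $t_1,\ldots,t_n$ is a $p$-basis of $\sigma$, giving $F_*T = \bigoplus_{0\le i_j \le p-1} T_{F_*R}\, F_* t_1^{i_1}\cdots t_n^{i_n}$ with $T_{F_*R} = T\otimes_R F_*R$. Second, by \autoref{pbasisdiffbasisequivalence}, $x_1,\ldots,x_n$ is a $p$-basis of $\theta$, so $F_*S = \bigoplus S_{F_*R}\, F_*x_1^{i_1}\cdots x_n^{i_n}$.

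Tensoring the first decomposition with $S$ over $T$ gives
\[
S \otimes_T F_*T \cong \bigoplus S\otimes_T T_{F_*R}\, F_*t^{\bm i} = \bigoplus S_{F_*R}\, F_* t^{\bm i},
\]
since $S \otimes_T (T\otimes_R F_*R) = S\otimes_R F_*R$. Under this identification, $F_\rho$ is $S_{F_*R}$-linear and sends $F_*t^{\bm i} \mapsto F_*x^{\bm i}$. It therefore carries one free $S_{F_*R}$-basis onto the other, and so it is an isomorphism. We should check that the $S_{F_*R}$-module structure on $F_*S$ used here is the one coming from $F_\theta$. This holds because $F_\theta$ factors as $S_{F_*R} \to S_{F_*T} \xrightarrow{F_\rho} F_*S$, where the first map is $s\otimes F_*r \mapsto s\otimes F_*r$; this compatibility of relative Frobenii with composition can be verified directly on elementary tensors.

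So $F_\rho$ is an isomorphism and $\rho$ is pristine. The main point to watch is the bookkeeping of the various base-change identifications, and the fact that pristine is defined for maps of locally noetherian schemes. The latter is fine here, since $T$ is noetherian whenever $R$ is, given that $n$ is finite.

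As an alternative that avoids $p$-bases, one can note that $\rho$ is formally étale by \autoref{pro.fsmoothdbasisfactorization}, since $\Gamma$ is a $d$-basis. Because $\theta$ is $F$-finite, $\rho$ is also $F$-finite, as $F_\rho$ is a quotient of the finite map given by base change along $S_{F_*R} \to S_{F_*T}$. It is then regular with $\Omega_\rho = 0$, hence by \autoref{ffiniteregularisfsmooth} $F_\rho$ is finite flat of rank $p^{\operatorname{rank}\Omega_\rho} = 1$, hence an isomorphism. The rank computation would itself need the $p$-basis count, so the direct argument above is the one to present.
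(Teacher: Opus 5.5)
Your proof is correct, and it is essentially the argument the paper leaves implicit, since the corollary is stated there without proof: \autoref{pro.fsmoothdbasisfactorization} produces the same map $\rho\colon t_i\mapsto x_i$, and comparing the evident $p$-basis decomposition of $F_*T$ with the decomposition of $F_*S$ from \autoref{cor.StandardCartierOperators} shows, exactly as you check, that $F_\rho$ carries the free $S_{F_*R}$-basis $\{F_*\bm{t}^{\bm i}\}$ onto the free basis $\{F_*\bm{x}^{\bm i}\}$. Incidentally, your discarded alternative needs no rank count: $\rho$ is regular with $\Omega_\rho=0$, so the empty set is a $d$-basis of $\rho$, and \autoref{pbasisdiffbasisequivalence} then says directly that $F_\rho$ is an isomorphism.
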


Regarding the existence of $d/p$-bases, we can say the following.

\begin{proposition}[{\cf \cite[Proposition 11.8]{polstramafsingularities}}]
\label{le.diffbasisexist}
Let $f\: X \to Y$ be a regular $F$-finite morphism of locally noetherian schemes. Then, there are affine open coverings $Y = \bigcup_k V_k$ and $f^{-1}V_k = \bigcup_{l} U_{kl}$ such that $\sO_Y(V_k) \to \sO_X(U_{kl})$ admits a $d/p$-basis for all $k,l$. 
\end{proposition}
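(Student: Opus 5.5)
The plan is to reduce to a purely local statement about free modules and then use Tyc's theorem (\autoref{pbasisdiffbasisequivalence}) to turn a local basis of $\Omega_f$ into a $p$-basis.

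First, the statement is local on $Y$ and then on $X$. So I will take an affine open $V=\Spec R\subset Y$ and a point $x\in f^{-1}V$, and look for an affine open neighbourhood $U=\Spec S$ of $x$ in $f^{-1}V$ such that $R\to S$ admits a $d$-basis. Covering $Y$ by such $V_k$ and each $f^{-1}V_k$ by such $U_{kl}$ then gives the claim.

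By \autoref{ffiniteregularisfsmooth}, $\Omega_f$ is a locally free coherent sheaf. Moreover, for any affine opens $U=\Spec S$ and $V=\Spec R$ with $f(U)\subset V$, we have $\Omega_f|_U=\widetilde{\Omega_{S/R}}$, since Kähler differentials are compatible with localization on both source and target.

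Next, pick an affine neighbourhood $\Spec S$ of $x$ inside $f^{-1}V$ and shrink it to a basic open $D(g)$ on which $\Omega_{S/R}$ becomes free, say of rank $n$. Since $S$ is noetherian, $R\to S$ is regular by base/localization of the regularity of $f$. We have $\Omega_{S_g/R}=(\Omega_{S/R})_g$. Its basis can be taken of the form $\mathrm{d}x_1,\dots,\mathrm{d}x_n$ with $x_i\in S_g$, because $\Omega_{S_g/R}$ is generated as an $S_g$-module by differentials of elements. The key point is that we can pass from generators to a basis of differentials $\mathrm{d}x_i$. Localize further at $x$: the images of the $\mathrm{d}y$, for $y\in S_g$, span the fiber $\Omega\otimes\kappa(x)$, so choose $y_1,\dots,y_n$ whose differentials form a basis there. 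By Nakayama, the map $S_g^{\oplus n}\to\Omega_{S_g/R}$, $e_i\mapsto \mathrm{d}y_i$, is a map of free modules of equal rank that is surjective near $x$. Hence it is an isomorphism after inverting some $h$, namely its determinant. Thus $\{y_i\}$ is a $d$-basis for $R\to S_{gh}$.

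Finally, $R\to S_{gh}$ is a regular map of noetherian rings, so \autoref{pbasisdiffbasisequivalence} upgrades this $d$-basis to a $p$-basis. That gives the $d/p$-basis on $U=\Spec S_{gh}$.

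I expect the main subtle step to be justifying the use of Tyc's theorem, namely that its hypotheses (regularity and noetherianity of $S_{F_*R}$) persist on the chosen affine opens. This follows from the locality of regularity and the relative Kunz theorem as recalled before \autoref{ffiniteregularisfsmooth}. The Nakayama/determinant argument itself is routine.
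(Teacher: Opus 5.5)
Your proposal is correct and follows essentially the same route as the paper. The paper also reduces to a point, uses that $\Omega_f$ is free of finite rank and generated by the $\mathrm{d}x$, and applies Nakayama to get a basis $\mathrm{d}x_1,\ldots,\mathrm{d}x_n$. You additionally make explicit two steps the paper leaves implicit: spreading this basis out to an affine neighbourhood by inverting the determinant, and upgrading the $d$-basis to a $p$-basis via Tyc's theorem (\autoref{pbasisdiffbasisequivalence}).
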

\begin{proof}
Since $\Omega_f$ is coherent, we may assume that $f$ is the spectrum of a local homomorphism $\theta\: (R, \m, \kay) \to (S, \n, \el)$. Since $\Omega_{\theta}$ is generated by $\mathrm{d}x$ with $x \in S$ and is free of finite rank, Nakayama's lemma implies that there are $\mathrm{d}x_1, \ldots, \mathrm{d}x_n$ that form a basis of $\Omega_{\theta}$.
\end{proof}

\begin{remark}
The existence of a $d$-basis is \emph{not} equivalent to $\Omega_f$ being free; see \cite[p. 392]{TycDifferentialBasisAndp-basis}.
\end{remark}

With notation as in \autoref{le.diffbasisexist}, we can then trivialize $\Omega_f$ and $F_{f*} \sO_X$ simultaneously. Indeed, letting $\Gamma_{kl} \subset \sO_X(U_{kl})$ be the (finite) $d/p$-basis on $U_{kl}$, then
\[
\Omega_f|_{U_{kl}} = \bigoplus_{x\in \Gamma_{kl}} \sO_{U_{kl}} \mathrm{d}x
\qquad \text{and} \qquad
F_{f*} \sO_X|_{U_{kl}} = \bigoplus_{\bm{i} \in I_{kl}} \sO_{U^{(p)}_{kl}} F_* \bm{x}^{\bm{i}}.
\]
But what can be said about the relationship between the corresponding transition matrices? This issue lies at the core of our construction of a twisted inverse image for Cartier modules. We will give a partial answer; yet, a complete solution to the following problem remains open.

\begin{question}[Tyc theorem and change of basis]
    Let $\theta\: R \to S$ be a regular $F$-finite homomorphism of noetherian rings. Let $x_1,\ldots,x_n \in S$ and $y_1,\ldots,y_n \in S$ be two $d/p$-bases. Consider the jacobian matrix $J \in \GL_n(S)$ given by
    $
    \mathrm{d} y_i = \sum_{j=1}^n J_{ij} \mathrm{d} x_j.
    $
    Likewise, consider the ``Frobenius-theoretic jacobian'' matrix $\Xi \in \GL_{p^n}(S_{F_* R})$ defined by
    $
    F_* \bm{y}^{\bm{i}} = \bigoplus_{\bm{j}} \Xi_{\bm{i},\bm{j}} F_* \bm{x}^{\bm{j}}.
    $
    How do $J$ and $\Xi$ compare to each other?
\end{question}

\section{Twisted Inverse Image for Regular $F$-finite Morphisms}
\label{RegularPullbacks}
The goal of this section is to construct a regular inverse image of a Cartier module. Throughout this section, we set $f\: X \to Y$ as a regular $F$-finite morphism of locally noetherian schemes. The first input is the construction of a relative Cartier isomorphism, which will enable the construction of relative Cartier operators and subsequently the sought twisted inverse image of Cartier modules. Recall that the sheaf of differentials $\Omega_f$ is a locally free coherent sheaf; see \autoref{ffiniteregularisfsmooth}. We then set $\omega_f \coloneqq \det \Omega_f$. 

\subsection{Relative Cartier isomorphisms}
As mentioned above, to construct an inverse image functor, we rely on (compatible) relative Cartier isomorphisms 
\[
\alpha_e\: \omega_f^{1-q} \xrightarrow{\sim} F_f^{e,!} \sO_{X^{(q)}}
\] 
which we shall construct next. Here, $F_f^{e,!} \sO_{X^{(q)}} \coloneqq \ssHom(F_{f \ast}^e \sO_X, \sO_{X^{(q)}})$. 
More generally, $F_f^{e,!}$ is the exceptional inverse image of the Frobenius as a finite map—it is the right adjoint functor of $F^e_{f*}$. Since $F^e_f$ is a faithfully flat finite morphism, we have a natural isomorphism 
\[
F_f^{e,!} \sO_{X^{(q)}} \otimes F_f^{e*} \xrightarrow{\sim} F_f^{e,!}.
\]
To simplify notation, we may sometimes write this as an equality.
\begin{remark}
    The above is equivalent to the ordinary relative Cartier isomorphism in the smooth case (\cf \eg \cite[A.2]{EmertonKisinTheRiemannHilbertCorrespondence}, \cite[Section 3]{stablerunitftestmodules}) because, for $e =1$, we obtain 
    \[
    \omega_f \cong F_f^! \sO_{X^{(p)}} \otimes \omega_f^p \cong F_f^! \sO_{X^{(p)}} \otimes (G_f \circ F_f )^\ast \omega_f \cong  F_f^! \sO_{X^{(p)}} \otimes F_f^* G^*_f \omega_f  \cong F_f^! G_f^\ast \omega_f.
    \]
\end{remark}

We also want this family of canonical isomorphisms $\{\alpha_e\}_{e\geq 1}$ to be compatible with one another. For example, we want them to be constructed inductively starting from $\alpha\coloneqq \alpha_1$. To this end, we start by recalling a well-known result, which we demonstrate for completeness.

\begin{lemma}
\label{le.shriekisofiniteflat}
 Consider a cartesian square of schemes
\[
\xymatrix{
T' \ar[r]^-{a'} \ar[d]_-{b'}& T \ar[d]^-{b} \\
S'\ar[r]^-{a}& S
}
\]
where $b$ is finite flat. 
There is a natural isomorphism $a'^* \circ b^! \to b'^! \circ a^*$.
\end{lemma}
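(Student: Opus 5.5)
The plan is to reduce to the affine case and use the description $b^!(-) = \ssHom(b_*\sO_T, -)$ as a sheaf on $T$. Concretely, $b^!\mathcal{G}$ is the $\sO_T$-module associated to the $b_*\sO_T$-module $\ssHom_{\sO_S}(b_*\sO_T,\mathcal{G})$. Both sides of the claimed map are quasi-coherent on $T'$, and their formation is compatible with restriction to opens. Hence it suffices to construct a natural map locally, check that it glues, and verify that it is an isomorphism on affines $S=\Spec A$, $S'=\Spec A'$, $T=\Spec B$ with $B$ finite flat over $A$, so $T'=\Spec(B\otimes_A A')$.

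To build the map globally, I would use base change for $b_*$, namely $a^*b_*\sO_T \cong b'_*\sO_{T'}$, which holds for affine $b$. The morphism is then the composite
\[
a^*\ssHom_{\sO_S}(b_*\sO_T,\mathcal{G}) \to \ssHom_{\sO_{S'}}(a^*b_*\sO_T,a^*\mathcal{G}) \cong \ssHom_{\sO_{S'}}(b'_*\sO_{T'},a^*\mathcal{G}).
\]
This is a map of $a^*b_*\sO_T = b'_*\sO_{T'}$-modules, so since $b'$ is affine it corresponds to a map $b'_*a'^*b^!\mathcal{G} \to b'_*b'^!a^*\mathcal{G}$. Here I use $b'_*a'^* \cong a^*b_*$, again base change for the affine morphism $b$. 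Naturality in $\mathcal{G}$ is clear from the construction.

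For bijectivity, I would work locally on $S$. Since $b$ is finite flat and $S$ may be taken locally noetherian (or, in general, using that finite flat of finite presentation means locally free of finite rank), $b_*\sO_T$ is locally free of finite rank. Shrinking $S$, assume $B\cong A^{n}$ as an $A$-module. Then $\Hom_A(B,M)\cong M^{n}$ and $\Hom_{A'}(B\otimes_AA',M\otimes_AA')\cong (M\otimes_A A')^n$. The map above becomes the identity under these identifications, so the canonical map $\Hom_A(B,M)\otimes_AA'\to\Hom_{A'}(B\otimes_AA',M\otimes_AA')$ is an isomorphism.

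The main subtlety is the finiteness hypothesis. If ``finite flat'' does not include finite presentation, $b_*\sO_T$ is only flat and finite, not necessarily locally free. In our applications everything is locally noetherian (e.g.\ $b=F^e_f$ from \autoref{ffiniteregularisfsmooth}), so a finite flat module over a noetherian ring is finitely presented and hence locally free. I would state this reduction explicitly and then note that the rest is formal.
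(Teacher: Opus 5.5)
Your proof is correct and is essentially the paper's argument: both reduce to the fact that $\Hom_A(B,-)$ commutes with base change $A\to A'$ when $B$ is a finite projective $A$-module. The difference is only organizational. The paper first uses $b^!\cong b^!\sO_S\otimes b^*$ (and likewise for $b'$) to reduce to $\mathcal{G}=\sO_S$, whereas you build the canonical map $a^*\ssHom(b_*\sO_T,\mathcal{G})\to\ssHom(b'_*\sO_{T'},a^*\mathcal{G})$ for all $\mathcal{G}$ at once. Your caveat that finite presentation is needed (automatic in the locally noetherian setting) is apt, since the paper's ``finitely generated flat'' tacitly relies on it.
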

\begin{proof}
Since $b$ is a finite flat map, we have a natural isomorphism $b^! \sO_S \otimes b^* \to b^!$.  This induces the natural isomorphism $a'^*b^!\sO_S \otimes c^* \to a'^* \circ b^!$,
where $c$ is the canonical map $T' \to S$. Similarly, $b'$ is also finite and flat, so we have a natural isomorphism $b'^! \sO_{S'} \otimes b'^* \to b'^!$, which yields the natural isomorphism $b'^! \sO_{S'} \otimes c^* \to b'^! \circ a^*$.
Thus, it suffices to construct a canonical isomorphism $a'^*b^!\sO_S \to b'^! \sO_{S'}$. To this end, we may work locally with $S = \Spec A$, $T = \Spec B$, and $S' = \Spec A'$. Then, the canonical homomorphism
\[
(A' \otimes_A B) \otimes_{B} \Hom_A(B,A) = A' \otimes_A \Hom_A(B,A) \xrightarrow{\sim} \Hom_{A'}(A' \otimes_A B,B)
\]
is an isomorphism as $B$ is a finitely generated flat $A$-module.
\end{proof}

   Using \autoref{eqn.Iterability2}, we may write $F^{e+e'}_f = F^{e}_{f^{(q')}} \circ F^{e'}_f$ and so
 \[
   F_f^{(e+e'),!} \sO_{X^{(qq')}} = F^{e',!}_f F^{e,!}_{f^{(q')}} \sO_{X^{(q'q)}} = F^{e',!}_f F^{e,!}_{f^{(q')}} G_{f^{(q)}}^{e'*}\sO_{X^{(q)}}.
    \]
Now, applying \autoref{le.shriekisofiniteflat} to the cartesian diagram
\[
\xymatrixcolsep{5pc}\xymatrix{
X^{(q')} \ar[r]^-{G^{e'}_f} \ar[d]_-{F^e_{f^{(q')}}} & X \ar[d]^-{F^e_f} \\
X^{(qq')} \ar[r]^-{G^{e'}_{f^{(q)}}} & X^{(q)}
}
\]
yields a natural isomorphism
    \[
 G_{f}^{e'*} \circ F^{e,!}_f \xrightarrow{\sim} F^{e,!}_{f^{(q')}} \circ G_{f^{(q)}}^{e'*},
    \]
and, in particular, a canonical isomorphism
\[
 G_{f}^{e'*} F^{e,!}_f \sO_{X^{(q)}} \xrightarrow{\sim} F^{e,!}_{f^{(q')}}  G_{f^{(q)}}^{e'*} \sO_{X^{(q)}}.
\]
We can summarize everything so far as follows.
\begin{definition}
    Defining the endofunctor on coherent sheaves $\Theta_f^e \coloneqq F_f^{e,!} \circ G^{e*}_f$, denote the above canonical isomorphisms as
    \[
\beta_e^{e'} \: \Theta^{e'}_f F^{e,!}_{f} \sO_{X^{(q)}}\xrightarrow{\sim}  F_f^{(e+e'),!} \sO_{X^{(qq')}}.
\]
\end{definition}

On the other hand, since $F_f$ is finite flat, we have a canonical isomorphism
\begin{align*}
 \Theta_f^{e'} \omega_f^{1-q} = F_f^{e',!}G_f^{e'*} \omega_f^{1-q} \xleftarrow{\sim } F_f^{e',!} \sO_{X^{(p)}} \otimes F_f^{e'*} G_f^{e'*} \omega_f^{1-q} &= F_f^{e',!} \sO_{X^{(p)}} \otimes F_X^{e'*} \omega_f^{1-q}\\
 &=  F_f^{e',!} \sO_{X^{(p)}} \otimes \omega_f^{q'(1-q)}.
\end{align*}
For ease of notation, we may write this as an equality. In this way, 
an isomorphism $\alpha_{e'}\colon \omega_f^{1-q'} \to F_f^{e',!} \sO_{X^{(q')}}$ induces an isomorphism 
\[
\alpha_{e'} \otimes \omega_f^{q'(1-q)} \: \omega_f^{1-qq'} \xrightarrow{\sim} \Theta_f^{e'} \omega_f^{1-q}.
\]

Suppose that we have further constructed an isomorphism $\alpha_e \: \omega_f^{1-q} \to F^{e,!}_f \sO_{X^{(q)}}$. Then we can construct $\alpha_{e+e'}$ as the composition of isomorphisms
 \[
 \omega_{f}^{1-qq'} \xrightarrow{\alpha_{e'} \otimes \omega_f^{q'(1-q)}} \Theta_f^{e'} \omega_f^{1-q}  \xrightarrow{ \Theta_f^{e'} \alpha_e} \Theta_f^{e'} F_f^{e,!} \sO_{X^{(q)}} \xrightarrow{\beta_e^{e'}} F_f^{(e+1),!} \sO_{X^{(qq')}}. 
 \]
 In other words, up to a canonical isomorphism of the target, $\alpha_{e+e'}$ can be obtained from $\alpha_e$ and $\alpha_{e'}$ as the composition of $\Theta_f^{e'} \alpha_e $ with $\alpha_{e'} \otimes \omega_f^{q'(1-q)}$.

 We can employ the above to construct the isomorphisms $\alpha_e$ inductively. We may start by constructing $\alpha = \alpha_1 \: \omega_f^{1-p} \to F_f^! \sO_{X^{(p)}}$ and then define $\alpha_e$ following the recursive rule
 \[
 \alpha_{e+1} \coloneqq \beta_e^1 \circ \Theta_f^1 \alpha_e \circ \Big(\alpha \otimes \omega_f^{p(1-q)}\Big) = \beta_1^{e} \circ \Theta_f^{e} \alpha \circ \Big(\alpha_e \otimes \omega_f^{q(1-p)}\Big).
 \]

\begin{theorem}[Relative Cartier isomorphisms]
\label{thm.regulardualizing}
Let $f\: X \to Y$ be a regular $F$-finite morphism of locally noetherian schemes as above. There are canonical isomorphisms \begin{equation}\label{eqn.ChoiceCartierOperators}\alpha_e\: \omega_f^{1-q} \xrightarrow{\sim} F_f^{e,!} \sO_{X^{(q)}}
\end{equation}
 such that (with notation as in \autoref{cor.StandardCartierOperators})
\[
\alpha_e \:(\mathrm{d}x_1 \wedge \cdots \wedge \mathrm{d}x_n)^{\otimes (1-q)} \mapsto \Phi^e_{f,\{x_1,\ldots,x_n\}}
\]
on any affine chart $f \: \Spec S \to \Spec R$ where there is a $d/p$-basis $x_1,\ldots,x_n \in S$. Moreover, 
\[
\alpha_{e+e'} \coloneqq \beta_e^{e'} \circ \Theta_f^{e'} \alpha_e \circ \Big(\alpha_{e'} \otimes \omega_f^{q'(1-q)}\Big)
\]
for all $0\neq e,e'\in \bN$. 
In particular, this yields compatible natural isomorphisms
 \[
 \gamma_e \:\omega_f^{1-q} \otimes F^{e*}_f \xrightarrow{\sim} F^{e,!}_f.
 \]
\end{theorem}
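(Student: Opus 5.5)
We construct $\alpha_1$ locally from $d/p$-bases and glue, then obtain $\alpha_e$ from the recursion. On an affine chart $\theta\colon R\to S$ with a $d/p$-basis $\Gamma=\{x_1,\ldots,x_n\}$ (available by \autoref{le.diffbasisexist} and \autoref{pbasisdiffbasisequivalence}), let $\omega_\theta$ be free on $\mathrm{d}x_1\wedge\cdots\wedge\mathrm{d}x_n$. By \autoref{cor.StandardCartierOperators}, $\Hom_{S_{F_*R}}(F_*S,S_{F_*R})$ is free on $\Phi_{\theta,\Gamma}$. Hence the assignment
\[
(\mathrm{d}x_1\wedge\cdots\wedge\mathrm{d}x_n)^{\otimes(1-p)}\mapsto\Phi_{\theta,\Gamma}
\]
defines an isomorphism $\alpha_\Gamma$ of free rank-one $S$-modules. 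To glue these, we must show $\alpha_\Gamma$ does not depend on $\Gamma$. Let $\Gamma'=\{y_1,\ldots,y_n\}$ be another $d/p$-basis on an overlapping chart, with Jacobian $J$. The claim is then
\[
\Phi_{\theta,\Gamma'}=\det(J)^{p-1}\cdot\Phi_{\theta,\Gamma},
\]
where $\det(J)^{p-1}$ acts through $F_*S$, i.e.\ the generator is multiplied by $F_*\det(J)^{p-1}$. Since $(\mathrm{d}\bm{y})^{1-p}=\det(J)^{1-p}(\mathrm{d}\bm{x})^{1-p}$, this identity is exactly what makes the two local maps agree. Compatibility with localization is clear, because $d/p$-bases and the dual-basis generators localize. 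So independence of the basis is the only real content.

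The main obstacle is this change-of-basis formula for $\Phi$, which is a partial answer to the Question above, at the level of top forms. I would prove it by reducing to a universal local computation. Both sides are $F_*S$-multiples of one another, $\Phi_{\Gamma'}=F_*u\cdot\Phi_{\Gamma}$ with $u\in S^\times$. To identify $u$, evaluate both sides on $F_*\bm{y}^{p-1}$, expressed via the $\bm{x}$-monomial basis. Since $S\to S_{F_*R}$-linear duals commute with flat base change (\autoref{le.shriekisofiniteflat}), we may localize and complete at primes, so $S$ may be taken local and $u$ is checked modulo a faithfully flat extension. Use \autoref{co.regularpbasisfactorization}: $R[t_1,\ldots,t_n]\to S$ is pristine, $t_i\mapsto x_i$, so $F_*S=S_{F_*R}\otimes_{F_*T'}F_*T$ with $T$ the polynomial ring. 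Write each $y_i$ in terms of the $x$'s modulo $p$-th powers, $y_i=\sum_{\bm{j}} c_{i\bm{j}}^{\,p}\bm{x}^{\bm{j}}$ with $c_{i\bm{j}}\in S_{F_*R}$. This makes the question a polynomial identity in the $x$'s over the ring of coefficients. There it is the classical fact that the Cartier operator is coordinate independent: the trace $\Phi$ on $k[x]$, $x^{p-1}\mapsto1$, transforms by $(\partial y/\partial x)^{p-1}$. I would verify this via the classical Cartier operator on $\Omega^n$ of a polynomial ring, or directly by the one-variable computation combined with the chain rule for compositions. I expect this verification to be the delicate step. Alternatively, if it is cleaner, one can invoke the known smooth/finite-type case (\cite{EmertonKisinTheRiemannHilbertCorrespondence}) after this reduction.

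Having $\alpha=\alpha_1$ globally, we define $\alpha_e$ by the recursion $\alpha_{e+1}=\beta_e^1\circ\Theta^1_f\alpha_e\circ(\alpha\otimes\omega_f^{p(1-q)})$. We then check by induction on $e$ that it sends $(\mathrm{d}\bm{x})^{1-q}\mapsto\Phi^e_{\Gamma}$. Locally, with the factorization \autoref{eqn.Iterability1}, this reduces to the standard fact that the composite of the dual-basis generators $\Phi_{f^{(q)}}$ and the base change of $\Phi^e_f$ is $(F^{e+1}_*\bm{x}^{pq-1})^\vee=\Phi^{e+1}$. The point is that $x_i\otimes1$ remains a $d/p$-basis after base change, and the monomials $\bm{x}^{\bm{i}+q\bm{j}}$ recover the base. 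Once the local formula holds for every $e$, the general identity $\alpha_{e+e'}=\beta_e^{e'}\circ\Theta^{e'}_f\alpha_e\circ(\alpha_{e'}\otimes\omega_f^{q'(1-q)})$ follows: both sides are isomorphisms between invertible sheaves that agree on local generators, by the same monomial computation with $q'$ in place of $p$. Canonicity follows from the basis independence. Finally, $\gamma_e$ is $\alpha_e\otimes F^{e*}_f$ composed with the isomorphism $F^{e,!}_f\sO\otimes F^{e*}_f\xrightarrow{\sim}F^{e,!}_f$ for finite flat maps. Its compatibility comes from that of the $\alpha_e$ and the naturality of $\beta$.
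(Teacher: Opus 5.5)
Your overall architecture is the paper's: define $\alpha_1$ on charts with a $d/p$-basis by $(\mathrm{d}\bm{x})^{1-p}\mapsto\Phi_\Gamma$, glue via a change-of-basis identity, and obtain $\alpha_e$ from the recursion. Your inductive check of the local formula for all $e$, and hence of the $e+e'$ recursion, is more explicit than the paper, which only says it suffices to construct $\alpha_1$. However, the one step that carries all the content is misstated and then not proved.

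As written, the identity is inverted. Since $\alpha_\Gamma$ is $S$-linear, $\alpha_\Gamma\big((\mathrm{d}\bm{y})^{1-p}\big)=F_*\det(J)^{1-p}\cdot\Phi_\Gamma$. So gluing requires $\Phi_{\Gamma'}=F_*\det(J)^{1-p}\cdot\Phi_\Gamma$, i.e.\ $\Phi_\Gamma=F_*\det(J)^{p-1}\cdot\Phi_{\Gamma'}$, which is the paper's $\xi=(\det J)^{p-1}$. With your version the two local maps differ by $\det(J)^{2(p-1)}$. It already fails for $S=R[x]$, $y=cx$, $c\in R^{\times}$: $\Phi_y(F_*y^{p-1})=1$, whereas $(F_*c^{p-1}\cdot\Phi_x)(F_*y^{p-1})=\Phi_x\big(F_*c^{2(p-1)}x^{p-1}\big)=1\otimes F_*c^{2(p-1)}$, which is not $1$ in general.

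More seriously, the corrected identity is only deferred to ``the classical fact'', and your reduction does not reach a setting where that fact applies. After the pristine factorization you have $F_*S\cong S_{F_*R}[\tau_1,\ldots,\tau_n]/(\tau_i^p-x_i\otimes 1)$, with the $y_i$ written using coefficients in $S_{F_*R}$, an arbitrary noetherian ring. This is not a smooth variety over a perfect field, and localizing or completing $S$ does not change that.

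To make your route work you must pass to a universal family. Over $R_0=\mathbb{F}_p[a_{i\bm{j}}]$, take $T_0=R_0[\tau_1,\ldots,\tau_n]$ localized at $\det J^{\mathrm{univ}}$, where $y_i^{\mathrm{univ}}=\sum_{\bm{j}}a_{i\bm{j}}\bm{\tau}^{\bm{j}}$. There $\bm{\tau}$ and $\bm{y}^{\mathrm{univ}}$ are both \'etale coordinates of a smooth map. The Cartier isomorphism \cite[Theorem 7.2]{KatzNilpotentConnections} then gives basis-independence of the Cartier operator on top forms, which unwinds to the identity. You would still have to check that the relative Frobenius, the dual-basis generators and the Jacobian specialize correctly along $a_{i\bm{j}}\mapsto(\text{coefficients of }y_i)$ and $\tau_i^p\mapsto x_i\otimes 1$.

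Your fallback (one variable plus the chain rule) needs two further things you do not supply. First, a local decomposition of the coordinate change into one-at-a-time replacements. Second, the one-variable case itself: with $\Phi_y=-\partial_y^{p-1}$ and $g=\partial_x y$, it is the operator identity $(g\partial_y)^{p-1}(s)=\partial_y^{p-1}(g^{p-1}s)$, which is not formal (it uses $\partial_x^p=0$).

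The paper instead proves the identity directly and self-containedly. From $\Phi_{\bm{x}}=(-1)^n\prod_k\partial_{x_k}^{p-1}$ and $\partial_{x_k}=\sum_l J_{lk}\partial_{y_l}$, it shows that only the top symbol survives. Hence $\xi$ is the coefficient of $\bm{\eta}^{\bm{p-1}}$ in $\prod_k\big(\sum_l J_{lk}\eta_l\big)^{p-1}$. It then proves this polynomial equals $\det(J)^{p-1}$ by showing it defines a homomorphism $\GL_n\to\mathbb{G}_{\mathrm{m}}$ over $\mathbb{F}_p$ and evaluating on Jordan forms.

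Once the universal reduction is written out, your route is a legitimate alternative that trades this computation for the classical Cartier isomorphism. As it stands, though, the heart of the theorem is missing.
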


\begin{proof}
As we have already explained, it suffices to construct $\alpha = \alpha_1$. By \autoref{le.diffbasisexist}, we have an open covering of $f$ by affine open charts $U \to V$ such that
\[
\bigoplus_{i=1}^n \sO_{U} \mathrm{d}x_{i} \xrightarrow{\sim} \Omega_{f}|_{U}
\]
for some $x_1,\ldots,x_n \in \sO_X(U)$. By \autoref{pbasisdiffbasisequivalence} and \autoref{cor.StandardCartierOperators}, we obtain a trivialization of $F_{f*} \sO_{X}$
\[
\bigoplus_{0 \leq i_1,\ldots,i_n \leq p-1} \sO_{U^{(p)}} F_{f*} x_{1}^{i_1} \cdots x_{n}^{i_n}  \xrightarrow{\sim} \big(F_{f*} \sO_{X}\big)\big|_{U^{(p)}},
\]
and further, a trivialization of  $F_f^{!} \sO_{X^{(p)}}$
\[
\sO_{U} \cdot \big(F_{f*} x_{1}^{p-1} \cdots x_{n}^{p-1}\big)^{\vee} \xrightarrow{\sim} \big(F_f^{!} \sO_{X^{(p)}}\big)\big|_{U}.
\]

We must prove that this local trivialization realizes $F_f^{!} \sO_{X^{(p)}}$ as $\omega_X^{1-p}$ canonically; i.e., it is compatible with change of $d/p$-basis. More precisely, write $V = \Spec R$, $U = \Spec S$, and  let $y_1, \ldots, y_n \in S$ be another $d/p$-basis. Consider the jacobian matrix $J \in \GL_n(S)$ given by
    \[
    \mathrm{d} y_i = \sum_{j=1}^n J_{ij} \mathrm{d} x_j.
    \]
In particular, $\mathrm{d}y_1 \wedge \cdots \wedge \mathrm{d}y_n = (\det J) \cdot \mathrm{d}x_1 \wedge \cdots \wedge \mathrm{d}x_n$. On the other hand,
\begin{equation}
\label{eq.basechange}
F_{f*} \bm{y}^{\bm{j}} = \sum_{\bm 0\leq \bm{i} \leq \bm{p-1}} \Xi_{\bm{i},\bm{j}} F_{f*} \bm{x}^{\bm{i}}, \quad \bm 0 \leq \bm{j} \leq \bm{p-1}
\end{equation}
for some uniquely determined $\Xi_{\bm{i},\bm{j}} \in S_{F_* R}$, such that $\Xi \in \GL_{p^n}(S_{F_* R})$. Here, we use multi-index notation and write $\bm n = (n, \ldots, n)$ for $n \in \N$. Moreover,
\[
\left(F_{f \ast} \bm{x^{p-1}}\right)^\vee = \left (F_{f \ast} \bm{y^{p-1}}\right)^\vee \cdot F_{f*}\xi
\]
for some uniquely determined $\xi \in S^{\times}$. We need to prove the identity
\[
\xi = (\det J)^{p-1}.
\]

\begin{claim}
\label{claim.tracedualbasechange}
With notation as above, we have
\[ 
\xi = \sum_{\bm 0 
\leq \bm j \leq \bm{p-1}} (-1)^n \prod_{k=1}^n \left(\sum_{l=1}^n J_{lk} \partial_{y_l}\right)^{p-1}\left(\bm{y^{j}}\right) \cdot \bm{y^{p-1 -j}}.\]
\end{claim}
\begin{proof}[Proof of the claim]
We have, using \autoref{eq.basechange},
\[\left(F_{f \ast}\bm x^{\bm{p-1}}\right)^\vee \left(F_{f \ast} \bm y^{\bm j} \right) = \left(F_{f \ast}\bm x^{\bm{p-1}}\right)^\vee \left(  \Xi_{\bm{ p-1}, \bm j} F_{f \ast}\bm x^{\bm{p-1}}\right) = F_f\left(\Xi_{\bm{p -1}, \bm j}\right).\]
It follows that \begin{equation}
    \label{eq.basechange2}
\left(F_{f \ast} \bm x^{\bm{p-1}}\right)^\vee = \left(F_{f, \ast} \bm y^{\bm{p-1}}\right)^\vee \cdot \sum_{\bm 0 
\leq \bm j \leq \bm{p-1}}\Xi_{\bm{p-1}, \bm j} F_{f \ast}\bm y^{\bm{p-1-j}}.
\end{equation}

Let $\{\partial_{x_1}, \ldots, \partial_{x_n}\}$ and by $\{\partial_{y_1}, \ldots, \partial_{y_n}\}$ be the dual bases to $\mathrm{d}\bm x$ and $\mathrm{d}\bm y$ in $\Omega_{S/R}^\vee = \Der_R(S)$. 
In particular, \[
\partial_{x_k} = \sum_{l=1}^n J_{lk} \partial_{y_l}.
\]
From \autoref{eq.basechange}, we deduce that
\[ \partial_{x_1}^{p-1} \cdots \partial_{x_n}^{p-1}\left( F_{f \ast} \bm y^{\bm j}\right) = (p-1)!^n F_f\left(\Xi_{\bm{p-1}, \bm j}\right) = (-1)^n F_f\left(\Xi_{\bm{p-1}, \bm j}\right)\]
Therefore
\[
F_f\left(\Xi_{\bm{p-1}, \bm j}\right) = (-1)^n \prod_{k=1}^n \left(\sum_{l=1}^n J_{lk} \partial_{y_l}\right)^{p-1}\left(F_{f \ast}\bm y^{\bm j}\right).
\]
Plugging this into \autoref{eq.basechange2} yields the claimed formula for $\xi$.
\end{proof}
Using the Leibniz rule multiple times, we can further rewrite the expression for $\xi$ as
\[
   \xi = \sum_{\bm 0 
\leq \bm j \leq \bm{p-1}} (-1)^n \sum_{\bm 0 \neq \bm{i} \leq \bm{p-1}} \xi_{\bm{i}}(J) \partial_{\bm{y}}^{\bm{i}} \left(\bm{y^j} \right) \cdot\bm{y^{p-1 -j}}\nonumber
\]
for some $\xi_{\bm{i}}(J) \in S$. Moreover,
\begin{align}
\label{eq.nu1}
\xi &=  (-1)^n \sum_{\bm 0 
\leq \bm j \leq \bm{p-1}}\sum_{\bm 0 \neq \bm{i} \leq \bm{p-1}} \xi_{\bm{i}}(J) \partial_{\bm{y}}^{\bm{i}}(\bm{y^{j}}) \cdot \bm{y^{p-1-j}}\\
&= (-1)^n \sum_{\bm 0 
\leq \bm j \leq \bm{p-1}}\sum_{\bm 0 \neq \bm{i} \leq \bm{j}} \xi_{\bm{i}}(J) \prod_{l=1}^n \prod_{k=0}^{i_l-1} (j_l -k) \bm{y^{p-1-i}}\nonumber\\
& = (-1)^n \sum_{\bm 0 
\neq \bm i \leq \bm{p-1}} \xi_{\bm{i}}(J)\sum_{\bm i \leq \bm{j} \leq \bm{p-1}}  \prod_{l=1}^n \prod_{k=0}^{i_l-1} (j_l -k) \bm{y^{p-1-i}}\nonumber\\
&= (-1)^n \sum_{\bm 0 
\neq \bm i \leq \bm{p-1}} \xi_{\bm{i}}(J) \sum_{u=1}^n \sum_{j_u = i_u}^{p-1} \prod_{l=1}^n \prod_{k=0}^{i_l-1} (j_l -k) \bm{y^{p-1-i}}\nonumber.
\end{align}

\begin{claim} The following equalities hold in $\bF_p$
    \[
     \sum_{j=i}^{p-1}  \prod_{k=0}^{i-1}(j-k) = \begin{cases}
         0 & \text{if } i=1,\ldots,p-2\\
         -1 & \text{if } i=p-1.
     \end{cases}
    \]
\end{claim}
\begin{proof}[Proof of the claim]
    Observe that 
     \[
     \sum_{j=i}^{p-1}  \prod_{k=0}^{i-1}(j-k) = \sum_{j=i}^{p-1} i! \binom{j}{i} = i! \sum_{j=i}^{p-1} \binom{j}{i} = i! \binom{p}{i+1}
    \]
    from which the result follows.
\end{proof}
With this, \autoref{eq.nu1} simplifies to
\[
\xi = (-1)^n \xi_{\bm{p-1}}(J) \cdot (-1)^n = \xi_{\bm{p-1}}(J).
\]

\begin{claim} \label{Claim.CombinatorialFormula}
We have 
\[
\xi_{\bm{p-1}}(J) = \sum_{\substack{a \: \{1,\ldots,n\} \times \{1,\ldots,n\} \to \{0,\ldots,p-1\} \\ \sum_{k} a_{kl} = p-1 = \sum_l a_{kl}}  } \frac{(p-1)!^n}{\prod_{k,l} a_{lk}!} \prod_{k,l} J_{lk}^{a_{lk}}.
\]
In particular, $\xi_{\bm{p-1}}(J) = \xi_{\bm{p-1}}(J^\top)$
\end{claim}
\begin{proof}[Proof of the claim]
Since $y_1, \ldots, y_n$ are a $p$-basis and the $\partial_{y_i}$ are $S_{F_*R}$-linear, we have that 
\[
\partial_{y_i} a \partial_{y_j} =  (a\partial_{y_i} -1) \partial_{y_j} = a \partial_{y_i} \partial_{y_j} - \partial_{y_j}\]
for all $a \in S$. Applying this reasoning multiple times, we see that
\[ \left(\sum_{l=1}^n J_{lk} \partial_{y_l}\right)^{p-1} = \sum_{a_{1k}+ \cdots +a_{nk}=p-1} \binom{p-1}{a_{1k},\ldots,a_{nk}} J_{1k}^{a_{1k}}\cdots J_{nk}^{a_{nk}} \partial_{y_1}^{a_{1k}} \cdots \partial_{y_n}^{a_{nk}} + g\]
where $g$ is a polynomial in the $\bm \partial_y$ of total degree $< p-1$.\footnote{The degree of $\bm{\partial_y^j}$ is $\bm j$ and its total degree is $j_1 + \cdots + j_n$.}  Then,
\begin{align*}
&\prod_{k=1}^n \left(\sum_{l=1}^n J_{lk} \partial_{y_l} \right)^{p-1} \\ 
={} &\prod_{k=1}^n \sum_{a_{1k}+ \cdots +a_{nk}=p-1} \binom{p-1}{a_{1k},\ldots,a_{nk}} J_{1k}^{a_{1k}}\cdots J_{nk}^{a_{nk}} \partial_{y_1}^{a_{1k}} \cdots \partial_{y_n}^{a_{nk}} + g\\
={} &\sum_{\substack{a \: \{1,\ldots,n\} \times \{1,\ldots,n\} \to \{0,\ldots,p-1\} \\ \sum_{k} a_{kl} = p-1 = \sum_l a_{kl}}  } \left( \prod_{k=1}^n \binom{p-1}{a_{1k},\ldots,a_{nk}} J_{1k}^{a_{1k}}\cdots J_{nk}^{a_{nk}}\right) \partial_{y_1}^{a_{11}+ \cdots + a_{1n}} \cdots \partial_{y_n}^{a_{n1}+\cdots +a_{nn}}+h
\end{align*}
where $h$ is a polynomial whose term of degree $\bm {p-1}$ is zero. Hence, we obtain
\begin{align*}
\xi_{\bm{p-1}}(J) &= \sum_{\substack{a \: \{1,\ldots,n\} \times \{1,\ldots,n\} \to \{0,\ldots,p-1\} \\ \sum_{k} a_{kl} = p-1 = \sum_l a_{kl}}  } \prod_{k=1}^n \binom{p-1}{a_{k1},\ldots,a_{kn}} J_{1k}^{a_{1k}}\cdots J_{nk}^{a_{nk}} \\
&= \sum_{\substack{a \: \{1,\ldots,n\} \times \{1,\ldots,n\} \to \{0,\ldots,p-1\} \\ \sum_{k} a_{kl} = p-1 = \sum_l a_{kl}}  } \frac{(p-1)!^n}{\prod_{k,l} a_{lk}!} \prod_{k,l} J_{lk}^{a_{lk}}.
\end{align*}
\end{proof}

Inspired by the above, we are going to think of $\xi_{\bm{p-1}}$ as an operator
\[
\xi_{\bm{p-1}}=\xi_{\bm{p-1}}(S) \: \GL_n(S) \to \GL_1(S)=S^{\times}
\]
given by the formula
\begin{equation} \label{eq.identifyingxip-1}
\xi_{\bm{p-1}}(\bm{\mu}) = \xi_{\bm{p-1}}(S)(\bm{\mu}) \coloneqq \sum_{\substack{a \: \{1,\ldots,n\} \times \{1,\ldots,n\} \to \{0,\ldots,p-1\} \\ \sum_{k} a_{kl} = p-1 = \sum_l a_{kl}}  } \frac{(p-1)!^n}{\prod_{k,l} a_{lk}!} \prod_{k,l} \mu_{lk}^{a_{lk}}.
\end{equation}
Moreover, we may think of $\xi_{\bm{p-1}}$ as a morphism between group $\bF_p$-varieties\footnote{That is, of geometrically reduced algebraic groups over $\bF_p$.}
\[
\xi_{\bm{p-1}} \: \GL_n \to \GL_1=\bG_{\mathrm{m}}.
\]

\begin{claim}
\label{clai.Mcompatiblewbasechange} $\xi_{\bm{p-1}}$ is further a group-scheme homomorphism. 
\end{claim}
\begin{proof}[Proof of the claim]
We may assume that $S$ is a separably closed field $\kay$ and show that
\[
\xi_{\bm{p-1}}(\bm \mu \cdot \bm \nu) = \xi_{\bm{p-1}}(\bm \mu) \cdot \xi_{\bm{p-1}}(\bm \nu)
\] 
for all $\bm{\mu}, \bm{\nu} \in \GL_n(\kay)$. See the discussion on density in \cite[pp. 9--13]{MilneAlgebraicGroups}. By gaussian elimination, $\bm \mu$ the product of elementary matrices. Thus, we may assume that $\bm{\mu}$ is an elementary matrix. We proceed with a case by case analysis. To this end, let $\bm{\epsilon}_{ij}$ denote the matrix whose only nonzero entry is the $i,j$ entry which is a $1$. Let us $\bm{\iota}$ denote the identity.

\emph{Assume that $\bm \mu = \bm{\iota} + (c-1) \bm{\epsilon}_{ii}$ with $c \in \kay^{\times}$.} In computing $\xi_{\bm{p-1}}(\bm \mu)$, the only matrices with $a_{kl} = 0$ for $k \neq l$ yield a non-trivial contribution. Thus, we only have the summand where all $a_{kk} = p-1$ and we obtain $\xi_{\bm{p-1}}(\bm \mu) = c^{p-1}$.
The formula for $\xi_{\bm{p-1}}(\bm {\mu} \cdot \bm{\nu})$ is
\begin{align*} \xi_{\bm{p-1}}(\bm {\mu} \cdot \bm{\nu}) &= \sum_{\substack{a \: \{1,\ldots,n\} \times \{1,\ldots,n\} \to \{0,\ldots,p-1\} \\ \sum_{k} a_{kl} = p-1 = \sum_l a_{kl}}  } \frac{(p-1)!^n}{\prod_{k,l} a_{k,l}!} \prod_{l} (c\nu_{il})^{a_{il}}\prod_{k\neq i, l}  \nu_{kl}^{a_{kl}}\\
&= \sum_{\substack{a \: \{1,\ldots,n\} \times \{1,\ldots,n\} \to \{0,\ldots,p-1\} \\ \sum_{k} a_{kl} = q-1 = \sum_l a_{kl}}  } \frac{(p-1)!^n}{\prod_{k,l} a_{k,l}!} c^{p-1} \prod_{k, l}  \nu_{kl}^{a_{kl}} = c^{p-1} \xi_{\bm{p-1}}(\bm {\nu}).\end{align*}

\emph{Assume that $\bm \mu$ is a permutation matrix  $\bm \mu = \bm{\iota} - \bm{\epsilon}_{ii} - \bm{\epsilon}_{jj} + \bm{\epsilon}_{ij} + \bm{\epsilon}_{ji}$.} A similar argument shows that $\xi_{\bm{p-1}}(\bm \mu) = 1$. For the assertion that $\xi_{\bm{p-1}}(\bm {\mu} \cdot \bm{\nu}) = \xi_{\bm{p-1}}(\bm{\nu})$, note that for a matrix $a$ as in the summation, the matrix $a'$ where the $i$th and $j$th rows are switched also occurs in the summation.

\emph{Assume that $\bm \mu = \bm{\iota} + c \bm{\epsilon}_{ij}$ with $c \in \kay$ and $i \neq j$.}
For $\xi_{\bm{p-1}}(\bm \mu)$, we note that only the matrices with $a_{kl} = 0$, except possibly for $k = l$ or $(k,l) = (i,j)$, produce a contribution. If $a$ is such a matrix, then necessarily $a_{ii} = p -1 - a_{ij}$, but $a_{ii}$ is the only entry in the $i$th column, so that $a_{ii} = p-1$ and $a_{ij} = 0$. Thus, there is only the summand for $a = (p-1)\bm{\iota}$, and so $\xi_{\bm{p-1}}(\bm \mu) =1$. Consider the linear change of variables
\[ 
\begin{bmatrix} s_1 \\ \vdots \\ s_n \end{bmatrix} \coloneqq \bm \mu \cdot \bm \nu \begin{bmatrix} t_1 \\ \vdots \\ t_n\end{bmatrix}
\]
in the field of rational functions $\kay(t_1,\ldots,t_n)$
A computation as the one leading to \autoref{Claim.CombinatorialFormula} shows that the coefficient of $\bm t^{\bm {p-1}}$ in $\bm s^{\bm {p-1}}$ is $\xi_{\bm{p-1}}( (\bm{\mu} \cdot \bm{\nu})^\top) = \xi_{\bm{p-1}}(\bm{\mu} \cdot \bm{\nu})$. On the other hand,
\begin{align*}
s_1^{p-1} \cdots s_n^{p-1} &=   \left(\sum_{l=1}^n (\nu_{il} + c \nu_{jl}) t_l\right)^{p-1} \cdot \prod_{k=1, k \neq i}^n\left(\sum_{l=1}^n\nu_{kl} t_l\right)^{p-1}\\&=
 \Bigg( \underbrace{\sum_{l=1}^n \nu_{il}t_l}_u + \underbrace{c \sum_{l=1}^n \nu_{jl} t_l}_v\Bigg)^{p-1} \cdot \prod_{k=1, k \neq i}^n\left( \sum_{l=1}^n \nu_{kl} t_l\right)^{p-1}\\
 &= \left(\sum_{r=0}^{p-1} \binom{p-1}{r} u^{p-1-r} v^r \right) \cdot \prod_{k=1, k \neq i}^n\left( \sum_{l=1}^n \nu_{kl} t_l\right)^{p-1}.
\end{align*}
For any summand in the left hand term with $r \geq 1$, we obtain an expression of the form
\[\bigg(\sum_{l=1}^n \nu_{jl} x_l\bigg)^p \cdot v = \sum_{l=1}^n \nu_{jl}^p x_l^p \cdot v\]
which does not contribute to the coefficient of $t_1^{p-1} \cdots t_n^{p-1}$. Hence, $\xi_{\bm{p-1}}(\bm{\mu} \cdot \bm{\nu}) = \xi_{\bm {p-1}}(\bm \nu)$.
\end{proof}

Thus, we are left with proving that $\xi_{\bm{p-1}} = \det^{p-1}$ as homomorphisms $\GL_n \to \bG_{\mathrm{m}}$ of group $\bF_p$-varieties. For this, it suffices to check equality at $\bar{\bF}_p$-points; see \cite[pp. 9--13]{MilneAlgebraicGroups}. In particular, we may assume that $S$ is an algebraically closed field $\kay$ and show that $\det(\bm \mu)^{p-1} = \xi_{\bm{p-1}}(\bm \mu)$ holds for all $\bm{\mu} \in \GL_n(\kay)$. By \autoref{clai.Mcompatiblewbasechange}, this identity is invariant under similarity/conjugacy of matrices. Hence, we may assume that $\bm \mu$ is in its Jordan normal form. 

Observe that it suffices to consider a single Jordan block with eigenvalue $\lambda$. Indeed, this is clear for $\det(\bm \mu)^{p-1}$. 
For $\xi_{\bm{p-1}}(\bm \mu)$, note that each matrix $a_{kl}$ occurring in \autoref{eq.identifyingxip-1} that yields a non-trivial contribution to $\xi_{\bm{p-1}}(\bm \mu)$ also has the same block diagonal form as $\bm \mu$ (if $a_{kl} \neq 0$ with $l \neq k, k+1$, then $\mu_{kl}^{a_{kl}} = 0)$.) Thus, it suffices to show that, in this case, $\xi_{\bm{p-1}}(\bm \mu) = \lambda^{n(p-1)}$.

In the summation of \autoref{eq.identifyingxip-1}, only matrices with only $a_{kk} \neq 0$ or $a_{k k+1}\neq 0$ yield a non-trivial contribution. If $a_{11} \neq p-1$, then the first column cannot add up to to $p-1$. Hence, $a_{11} = p-1$ and then $a_{12} = 0$. Continuing in this way we see that all $a_{kk}$ must be $p-1$ and all $a_{kk+1}$ have to be zero. Thus $\xi_{\bm{p-1}}(\bm \mu) = \lambda^{n(p-1)}$; as claimed.

This finishes the proof of the theorem.
\end{proof}

\begin{remark}
We note that one can also extract a combinatorial identity from the above theorem.
Letting $S_n$ denote the $n$-th symmetric group, we have:
\begin{align*}
    (\det(\mu_{kl}))^{p-1} &= \left(\sum_{\sigma \in S_n} \sgn \sigma \cdot \mu_{1\sigma(1)} \cdots \mu_{n\sigma(n)}\right)^{p-1}\\
    &= \sum_{\substack{b\: S_n \to \{0,\ldots,p-1\} \\ \sum_{\sigma \in S_n} b_{\sigma} = p-1}} \frac{(p-1)!}{\prod_{\sigma \in S_n} b_{\sigma}!} \prod_{\sigma \in S_n} \big(\sgn \sigma \cdot \mu_{1\sigma(1)} \cdots \mu_{n\sigma(n)}\big)^{b_{\sigma}} \\
    &= \sum_{\substack{b\: S_n \to \{0,\ldots,p-1\} \\ \sum_{\sigma \in S_n} b_{\sigma} = p-1}} \frac{(p-1)!}{\prod_{\sigma \in S_n} b_{\sigma}!}  \prod_{\sigma \in S_n}  \big(\sgn \sigma \big)^{b_{\sigma}} \prod_{k,l} \mu_{kl}^{\sum_{\sigma, \sigma(k) = l} b_{\sigma}}
\end{align*}
Thus, the equality $\xi_{\bm{p-1}}(\bm{\mu}) = \det(\bm \mu)^{p-1}$ is equivalent to the following congruence. Let $(a_{kl})$ be an $n \times n$ matrix with entries in $\{0,\ldots, p-1\}$ whose rows and columns each add up to $p-1$, that is, $\sum_k a_{kl} = p-1 = \sum_l a_{kl}$ (a sort of arithmetic doubly stochastic matrix). Then:
\[
\frac{(p-1)!^n}{\prod_{k,l} a_{k,l}!} \equiv \sum_{\substack{b \: S_n \to \{0,\ldots,p-1\} \\ \sum_{\sigma} b_{\sigma} = p-1 \\  \sum_{\sigma(k)=l} b_{\sigma} = a_{k,l}}} \frac{(p-1)!}{\prod_{\sigma \in S_n} b_{\sigma}!}  \prod_{\sigma \in S_n}  \big(\sgn \sigma \big)^{b_{\sigma}} \qquad \mod p
\]
In other words, let $M_n$ be the set of $n \times n$ matrices with entries in $\{0,\ldots, p-1\}$ whose rows and columns add up to $p-1$ and $B_n$ be the set of functions $b\: S_n \to \{0,\ldots,p-1\}$ such that $\sum_{\sigma \in S_n} b_{\sigma} = p-1$. There is a mapping:
\[
\xi\: B_n \to M_n, \quad b \mapsto \sum_{\sigma \in S_n} b_{\sigma} P_{\sigma}
\]
where $P_{\sigma}$ denotes the matrix obtained by permuting the columns of the identity according to $\sigma$ (i.e., $(P_{\sigma})_{kl} = \delta_{k\sigma(k)}$). Then, for all $a \in M_n$, it follows that
\[
\frac{(p-1)!^n}{\prod_{k,l} a_{k,l}!} \equiv \sum_{b \in \xi^{-1}a} \frac{(p-1)!}{\prod_{\sigma \in S_n} b_{\sigma}!}  \prod_{\sigma \in S_n}  \big(\sgn \sigma \big)^{b_{\sigma}} \qquad \mod p.
\]
This is a combinatorial identity that is independent of $S, R$. If there were another way to show this, we would have another proof for $\xi_{\bm{p-1}} = \det^{p-1}$.
\end{remark}

\subsection{Relative Cartier operators} \label{Sec.RelCartierOperartor}
Let $f\: X \to Y$ be a regular $F$-finite morphism of locally noetherian schemes, as before. Given a coherent $\sO_Y$-module $\sF$, we set \[
f^! \sF \coloneqq \omega_f \otimes f^\ast \sF.
\]
This defines a covariant functor $f^!$ from coherent sheaves on $Y$ to coherent sheaves on $X$. Our next goal is to construct a \emph{relative Cartier operator}, which is a natural transformation 
\[
\varkappa^e \: F_{f \ast}^e f^!  \longrightarrow f^{(q)!}  \coloneqq \omega_{f^{(q)}} \otimes f^{(q)*} =  G^{e\ast}_f \omega_f \otimes f^{(q)*}.
\] 
Note that $f^{(q)}$ is another regular $F$-finite morphism of locally noetherian schemes.
Further, observe that this recovers the classical Cartier operator by plugging in $f\:X \to Y=\Spec \kay$ a smooth morphism to a perfect groundfield $\kay$ and taking $\kappa^e_X = \varkappa^e_{\kay}$. We will also describe it locally in explicit terms in \autoref{le.explicitrelativecartierstructureregular} below.

For the construction of $\varkappa^e$, consider first the Grothendieck trace of $F^e_f$
\begin{equation}
\label{eq.relativeCartier1}
\Tr_{f^{(q)*}\sF} \: F_{f*}^e F_f^{e,!} f^{(q)*}\sF \to f^{(q)*}\sF. 
\end{equation}
Plugging in $f^{(q)*} \sF$ in the natural isomorphism $\gamma_e$, which was defined in \autoref{thm.regulardualizing}, yields
\[
\gamma_{e,f^{(q)*} \sF} \: \omega_f^{1-q} \otimes f^* \sF \xrightarrow{\sim}  F^{e,!}_f f^{(q)*} \sF.
\]
Pushing it forward along $F^e_{f}$ and composing with \autoref{eq.relativeCartier1} then yields a new trace
\begin{equation} \label{eq.relativeCartier2}
F_{f*}^e \bigl( \omega_f^{1-q} \otimes f^* \sF \bigr) \xrightarrow{\sim}   F_{f \ast}^e F^{e,!}_f f^{(q)*} \sF \xrightarrow{\Tr} f^{(q)*}  \sF.
\end{equation}
 Tensoring \autoref{eq.relativeCartier2} by $G^{e\ast}_f \omega_f$ yields
\[
\varkappa^e_{\sF} \: F_{f \ast}^e \big(\omega_f \otimes f^\ast \sF\big) = F_{f \ast}^e f^! \sF \longrightarrow G^{e\ast}_f \omega_f \otimes f^{(q)*}  \sF = f^{(q)!} \sF,
\]
by the projection formula and noting that $\omega_f^q = F^{e\ast}_X \omega_f = F_f^{e \ast} G^{e\ast}_f \omega_f$. In a nutshell, $\varkappa^e$ is the natural transformation $\Tr_{f^{(q)*}}$ tensored by $\omega_{f^{(q)}} = G^{e\ast}_f \omega_f$. 

Conceptually, the main use of $\varkappa^e$ for our purposes is that its pushforward along $G^e_f$ allows us to swap inverse images with Frobenius pushforwards, as the following proposition establishes.

\begin{proposition}
    With notation as above, $G^e_{f \ast} \varkappa^e$ is a natural transformation
\begin{equation}\label{eq.PushforwardRelativeCartier}
G^e_{f\ast} \varkappa^e \: F_{X,*} f^!  \longrightarrow  f^! F_{Y,*}^e .\end{equation}
\end{proposition}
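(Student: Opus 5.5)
We only need to identify source and target of $G^e_{f*}\varkappa^e_{\sF}$ with $F^e_{X*} f^!\sF$ and $f^! F^e_{Y*}\sF$, naturally in $\sF$; naturality then comes for free because $\varkappa^e$ is a natural transformation and $G^e_{f*}$ is a functor. (We read $F_{X,*}$ in the statement as $F^e_{X*}$.)

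For the source, $G^e_f \circ F^e_f = F^e_X$ by the diagram \autoref{relFrobenius}. Hence
\[
G^e_{f*}F^e_{f*} f^!\sF = (G^e_f\circ F^e_f)_* f^!\sF = F^e_{X*} f^!\sF.
\]
This is a canonical identification, natural in $\sF$.

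For the target, $G^e_{f*}$ applied to $G^{e*}_f\omega_f\otimes f^{(q)*}\sF$ is handled by the projection formula:
\[
G^e_{f*}\big(G^{e*}_f\omega_f\otimes f^{(q)*}\sF\big)\cong \omega_f\otimes G^e_{f*} f^{(q)*}\sF.
\]
This holds because $\omega_f$ is an invertible sheaf, so the projection formula applies to any morphism, even one that is not quasi-compact. Next, base change for the cartesian square with vertical arrows $f^{(q)}$, $f$ and horizontal arrows $G^e_f$, $F^e_Y$ gives a natural map
\[
f^* F^e_{Y*}\sF \to G^e_{f*} f^{(q)*}\sF.
\]
This map is an isomorphism, since $F^e_Y$ is affine and flat base change is not even needed: for affine $F^e_Y$, pushforward commutes with arbitrary base change of quasi-coherent sheaves. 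Locally, with $f=\Spec(R\to S)$, both sides are $S\otimes_R F^e_*M$ viewed as an $S$-module. Tensoring with $\omega_f$ then identifies the target with $\omega_f\otimes f^*F^e_{Y*}\sF = f^!F^e_{Y*}\sF$.

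Composing $G^e_{f*}\varkappa^e_\sF$ with these identifications defines the desired transformation $F^e_{X*}f^!\sF \to f^!F^e_{Y*}\sF$. Each identification is natural in $\sF$, so the composite is a natural transformation.

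There is one point of care, which is the main (mild) obstacle. $f^!$ was defined on coherent sheaves, but $F^e_{Y*}\sF$ need not be coherent, since $Y$ is not assumed $F$-finite. We handle this by extending $f^! = \omega_f\otimes f^*$ verbatim to quasi-coherent sheaves. Likewise $F^e_{X*}f^!\sF$ is only quasi-coherent, which is harmless because all the identifications above hold for quasi-coherent sheaves.
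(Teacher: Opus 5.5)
Your proposal is correct and follows the paper's proof. You identify the source via $G^e_f\circ F^e_f=F^e_X$, and the target via the projection formula for the invertible sheaf $\omega_f$ together with the base-change isomorphism $G^e_{f*}f^{(q)*}\cong f^*F^e_{Y*}$; your remark that $f^!=\omega_f\otimes f^*$ must be extended to quasi-coherent sheaves (since $F^e_{Y*}\sF$ need not be coherent) is a point the paper leaves implicit.
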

\begin{proof}
Note that $G^e_{f\ast} F^e_{f\ast} = F^e_{X\ast}$. On the other hand, using the projection formula and the natural isomorphism $G^e_{f\ast} f^{(q)*}= f^\ast F_{Y\ast}^e$, we obtain the natural isomorphisms
\[
G^e_{f\ast}f^! = G^e_{f\ast} \big( G^{e\ast}_f \omega_f \otimes f^{(q)*}  \big) = \omega_f \otimes  G^e_{f\ast} f^{(q)*}  = \omega_f \otimes f^\ast F^e_{Y\ast}  = f^! F^e_{Y\ast}.
\]
Thus, up to a natural isomorphism on the source, $G^e_{f\ast} \varkappa^e$ has the desired form.
\end{proof}

We next highlight how to calculate $\varkappa^e$ on local affine charts on which $f$ admits a $d/p$-basis.

\begin{proposition}[Local description of the Cartier operators]
\label{le.explicitrelativecartierstructureregular}
With notation as above, suppose that $f\: \Spec S \to \Spec R$ is affine and admits a $d/p$-basis $x_1, \ldots, x_n \in S$.  Letting $M$ be a finitely generated $R$-module, the Cartier operator
\[ 
\varkappa^e = \varkappa^e_M \: F^e_{f \ast}f^! M \longrightarrow f^{(q)!} M
\] is explicitly given by the $S_{F^e_*R}$-linear map
\begin{align*}
F^e_{f*}(S \cdot \mathrm{d} x_1 \wedge \cdots \wedge \mathrm{d} x_n \otimes M) &\to S_{F^e_* R} \cdot \mathrm{d} x_1 \wedge \cdots \wedge \mathrm{d} x_n \otimes M ,\\
F^e_{f*}(s \cdot \mathrm{d} x_1 \wedge \cdots \wedge \mathrm{d} x_n \otimes m ) &\mapsto \Phi^e_f(F^e_{f*}s) \cdot \mathrm{d} x_1 \wedge \cdots \wedge \mathrm{d} x_n \otimes m .
\end{align*} 
Moreover, the map
\[G^e_{f\ast} \varkappa^e_M\: F_{S\ast}^e f^! M \longrightarrow f^! F^e_{R*}M 
\] is explicitly given by
\begin{align*}
F^e_{S*}(S \cdot \mathrm{d} x_1 \wedge \cdots \wedge \mathrm{d} x_n \otimes M) &\to S \cdot \mathrm{d} x_1 \wedge \cdots \wedge \mathrm{d} x_n \otimes F^e_{R*}M ,\\
F^e_{S*}(s \cdot \mathrm{d} x_1 \wedge \cdots \wedge \mathrm{d} x_n \otimes m ) &\mapsto \sum_i s_i \cdot \mathrm{d} x_1 \wedge \cdots \wedge \mathrm{d} x_n \otimes F^e_{R*}r_i m 
\end{align*} 
where $\Phi^e_f(F^e_{f*}s) = \sum_i s_i \otimes F^e_{R*} r_i.$
\end{proposition}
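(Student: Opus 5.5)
The plan is to unwind the definition of $\varkappa^e$ on the affine chart $\Spec S \to \Spec R$, step by step, and to use the explicit form of $\alpha_e$ from \autoref{thm.regulardualizing}. On this chart $\omega_f = S\cdot \mathrm{d}x_1\wedge\cdots\wedge\mathrm{d}x_n$ is free, $f^{(q)*}M = S_{F^e_*R}\otimes_R M$ (base change along $F^e_R$), and $F^{e,!}_f$ of a module $N$ over $S_{F^e_*R}$ is $\Hom_{S_{F^e_*R}}(F^e_*S, N)$. Since $F^e_f$ is finite flat, this equals $\Hom_{S_{F^e_*R}}(F^e_*S,S_{F^e_*R})\otimes N$. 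By \autoref{cor.StandardCartierOperators} the first factor is freely generated over $F^e_*S$ by $\Phi^e_f$. The Grothendieck trace $\Tr_N$ is evaluation at $F^e_*1$, i.e.\ $\phi\otimes n \mapsto \phi(F^e_*1) n$. Equivalently, under the identification $F^e_*S\cdot\Phi^e_f\otimes N$, we have $F^e_*s\cdot\Phi^e_f\otimes n\mapsto \Phi^e_f(F^e_*s)\, n$.

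First, I would compute $\gamma_{e,f^{(q)*}M}$. By construction (tensoring $\alpha_e$ with the natural isomorphism for finite flat maps), it sends $(\mathrm{d}\bm x)^{\otimes(1-q)}\otimes (s\otimes m)$ to $\Phi^e_f\cdot F^e_*s\otimes m$. Here I use $f^*M = F^{e*}_f f^{(q)*}M$ and the normalization $\alpha_e\colon (\mathrm{d}\bm x)^{\otimes(1-q)}\mapsto\Phi^e_f$ from \autoref{thm.regulardualizing}. Composing with the trace, the map \autoref{eq.relativeCartier2} becomes $F^e_*\big(s(\mathrm{d}\bm x)^{1-q}\otimes m\big)\mapsto \Phi^e_f(F^e_*s)\otimes m$.

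Next, I tensor with $G^{e*}_f\omega_f = S_{F^e_*R}\cdot \mathrm{d}\bm x$ via the projection formula. The element $s\,\mathrm{d}\bm x\otimes m$ is rewritten as $s(\mathrm{d}\bm x)^{1-q}\otimes m$ times $(\mathrm{d}\bm x)^{q} = F^{e*}_f(1\otimes \mathrm{d}\bm x)$. So it maps to $\Phi^e_f(F^e_*s)\,\mathrm{d}\bm x\otimes m$, which is the first formula. The main point requiring care is this bookkeeping: one must check that the identification $\omega_f^q = F^{e*}_f G^{e*}_f\omega_f$ sends $(\mathrm{d}\bm x)^{\otimes q}$ to the pullback of $\mathrm{d}\bm x$, so that no unit factor appears. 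This follows because $F^e_X$ pulls a local generator back to its $q$-th tensor power.

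Finally, for $G^e_{f*}\varkappa^e_M$, I would apply $G^e_{f*}$, i.e.\ regard everything as $S$-modules via $S\to S_{F^e_*R}$. The target identification $S_{F^e_*R}\,\mathrm{d}\bm x\otimes_R M \cong S\,\mathrm{d}\bm x\otimes_R F^e_{R*}M$ is the one from the preceding proposition. It sends $(s_i\otimes F^e_*r_i)\,\mathrm{d}\bm x\otimes m$ to $s_i\,\mathrm{d}\bm x\otimes F^e_{R*}(r_im)$. Writing $\Phi^e_f(F^e_*s)=\sum_i s_i\otimes F^e_{R*}r_i$ and substituting gives the second formula.
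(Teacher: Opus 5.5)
Your proposal is correct and takes essentially the same approach as the paper. The paper gives no separate proof of this proposition: it is read off directly from the construction of $\varkappa^e$ (the trace of $F^e_f$ as evaluation at $F^e_*1$, precomposed with $\gamma_e$ and tensored with $G^{e*}_f\omega_f$ via the projection formula), using the normalization $\alpha_e\colon(\mathrm{d}x_1\wedge\cdots\wedge\mathrm{d}x_n)^{\otimes(1-q)}\mapsto\Phi^e_f$ from \autoref{thm.regulardualizing}, together with the identifications $F^{e*}_X\omega_f\cong\omega_f^{\otimes q}$ (no unit factor appears) and $G^e_{f*}f^{(q)*}M\cong S\otimes_R F^e_{R*}M$, which is exactly the unwinding you carry out.
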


\autoref{le.explicitrelativecartierstructureregular} shows that $\varkappa^e_{\sF}$ is essentially $\kappa^e_f \otimes_{\sO_Y} \sF$ where \[
\kappa^e_f \coloneqq \varkappa^e_{\sO_Y} \: F^e_{f*} \omega_f \to \omega_{f^{(q)}} \]
is the \emph{Cartier operator of $f$}. Furthermore, it has the following generating property.

\begin{corollary} \label{le.relativeCartierGenerates}
    With notation as above, the map 
    \[
\sO_X \xrightarrow{1 \mapsto \kappa^e_f}    \ssHom_{X^{(q)}}(F^e_{f*} \omega_f,\omega_{f^{(q)}})
    \]
    is an isomorphism of $\sO_X$-modules.
\end{corollary}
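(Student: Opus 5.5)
The plan is to reduce the statement to the isomorphism $\alpha_e$ of \autoref{thm.regulardualizing}, using adjunction for the finite flat map $F^e_f$. The map $\sO_X \to \ssHom_{X^{(q)}}(F^e_{f*}\omega_f,\omega_{f^{(q)}})$ is a morphism of $\sO_X$-modules. Here the $\sO_X$-structure on the target comes from precomposition with multiplication on $F^e_{f*}\omega_f$. Hence being an isomorphism is local on $X$, and by \autoref{le.diffbasisexist} it suffices to check it on affine charts $\Spec S \to \Spec R$ admitting a $d/p$-basis $x_1,\ldots,x_n$. There it becomes the assertion that $\Hom_{S_{F^e_*R}}(F^e_*S, S_{F^e_*R})$ is a free $F^e_*S$-module generated by $\kappa^e_f$ after trivializing $\omega_f$ and $\omega_{f^{(q)}}$.

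The local computation goes as follows. Trivialize $\omega_f = S\,\mathrm{d}x_1\wedge\cdots\wedge\mathrm{d}x_n$. Since $\omega_{f^{(q)}} = G^{e*}_f\omega_f$, it is trivialized by the same generator over $S_{F^e_*R}$. By \autoref{le.explicitrelativecartierstructureregular} with $M=R$, the map $\kappa^e_f$ is identified with $\Phi^e_f = (F^e_* x_1^{q-1}\cdots x_n^{q-1})^\vee$, and $s \in S$ acts on it as $\Phi^e_f(F^e_*s\,\cdot\,-)$. Therefore the map in question is identified with
\[
F^e_*S \longrightarrow \Hom_{S_{F^e_*R}}(F^e_*S,S_{F^e_*R}), \qquad F^e_*s \mapsto \Phi^e_f\cdot F^e_*s .
\]
By \autoref{cor.StandardCartierOperators}, this is an isomorphism, since $\Phi^e_f$ is a free generator.

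The independence of the chosen $d/p$-basis is not an issue, because the global map $1\mapsto\kappa^e_f$ is already globally defined. Only the check that it is an isomorphism is local. Alternatively, one can argue globally: $\ssHom(F^e_{f*}\omega_f,\omega_{f^{(q)}}) \cong F^e_{f*}\ssHom(\omega_f, F^{e,!}_f\omega_{f^{(q)}})$ by duality for the finite map $F^e_f$. Then $\gamma_e$ from \autoref{thm.regulardualizing} identifies $F^{e,!}_f G^{e*}_f\omega_f$ with $\omega_f^{1-q}\otimes\omega_f^q = \omega_f$, so the target is $\ssHom(\omega_f,\omega_f)=\sO_X$. One then has to check that $\kappa^e_f$ corresponds to $1$, which holds by construction, since $\kappa^e_f$ is the trace composed with $\gamma_e$.

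The only point requiring care, and the main obstacle, is the compatibility of the $\sO_X$-module structures. One must confirm that the identification in \autoref{le.explicitrelativecartierstructureregular} intertwines premultiplication by $F^e_*s$ with the $F^e_*S$-module structure on $\Hom$. This follows from the $S_{F^e_*R}$-linearity of the displayed formula, which gives $\varkappa^e(F^e_*(s's\,\omega)) = \Phi^e_f(F^e_*(s's))\,\omega$.
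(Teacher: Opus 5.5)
Your proof is correct and follows the route the paper intends: the corollary is read off from \autoref{le.explicitrelativecartierstructureregular}, which identifies $\kappa^e_f$ locally with $\Phi^e_f$ after trivializing $\omega_f$ and $\omega_{f^{(q)}}=G^{e*}_f\omega_f$, combined with \autoref{cor.StandardCartierOperators}, which says that $\Phi^e_f$ freely generates $\Hom_{S_{F^e_*R}}(F^e_*S,S_{F^e_*R})$ over $F^e_*S$. Your global variant via adjunction for $F^e_f$ is also valid, because the trace is the counit, so the adjoint of $\kappa^e_f$ is just $\gamma_e$ evaluated at $G^{e*}_f\omega_f$, which is an isomorphism.
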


\subsection{Pulling back Cartier modules}

Having constructed the relative Cartier operator, we can explain how to pull back Cartier linear maps. If $\phi\: F_{Y\ast}^e \sF \to \sF$ is an $\sO_Y$-linear map, then we define a Cartier linear map $\phi^{!}\: F_{X\ast}^e f^! \sF \to f^! \sF$ by setting
\[ 
\phi^! \coloneqq f^! \phi \circ G_{f\ast}^e \varkappa_{\sF}^e.
\]
In particular, in the setup of \autoref{le.explicitrelativecartierstructureregular}, the map $\phi^{!}$ is explictly given locally by
\begin{equation} \label{eqn.DefOfPhiUpperShriek}
    \phi^! \: F^e_{S*}(s \cdot \mathrm{d} x_1 \wedge \cdots \wedge \mathrm{d} x_n \otimes m ) \mapsto \sum_i s_i \cdot \mathrm{d} x_1 \wedge \cdots \wedge \mathrm{d} x_n \otimes \phi(F^e_{R*}r_i m),
\end{equation}
where $\Phi^e_f(F^e_{f*}s) = \sum_i s_i \otimes F^e_{R*} r_i$.

Let us recall that a \emph{Cartier algebra} on $Y$ is a sheaf of $\mathbb{N}$-graded rings $\sC = \bigoplus_{e\in \bN} \sC_e$ with $\sC_{0} = \sO_Y$ that is a quasi-coherent $\sO_Y$-bimodule with $r \cdot \varphi = \varphi \cdot r^{q}$ for local sections $\varphi \in \sC_{e}$. In particular, the right $\sO_Y$-module structure determines the left $\sO_Y$-module structure. Thus, whenever we view $\sC$ as an $\sO_Y$-module, we refer to its right $\sO_Y$-module structure. For example, $f^*\sC$ is the Cartier algebra on $X$ obtained by pulling back $\sC$ as a right $\sO_Y$-module. See \cite[Proposition 5.3]{BlickleStablerFunctorialTestModules} for more details. Given a quasi-coherent sheaf $\sF$ on $Y$, its \emph{full Cartier algebra} is defined as
\[
\sC_{\sF} \coloneqq \sO_{Y} \oplus \bigoplus_{e \in \bN_+} \ssHom_Y(F^e_{Y*} \sF,\sF).
\]

For $\sF$ to become a \emph{Cartier $\sC$-module}, all that is needed is a graded homomorphism $\sC \to \sC_{\sF}$ plus the condition that $\sF$ is coherent. This gives rise to the category $\sC$-mod. 

The assignment $\phi \mapsto \phi^!$ defined above gives rise to an $\sO_Y$-linear homomorphism
\[
\sC_{\sF,e} \xrightarrow{\phi \mapsto \phi^!} f_* \sC_{f^!\sF,e}
\]
and, moreover, to a graded homomorphism $\sC_{\sF} \to f_* \sC_{f^!\sF}$. In particular, if $\sF$ is a $\sC$-mod with structural homomorphism $\sC \to \sC_{\sF}$, the composition
\[
\sC \to \sC_{\sF} \to f_* \sC_{f^! \sF}
\]
is adjoint to a map
\[
f^* \sC \to \sC_{f^!\sF},
\]
and so $f^!\sF$ becomes a Cartier $f^*\sC$-module. To sum up, the above defines the functor
\[
f^!\: \sC\text{-mod} \to f^*\sC\text{-mod},
\]
which lets us ``pull back'' Cartier modules along regular maps. 

Let us recall that an analogous formalism can be constructed for finite covers. See \cite[\S2.2]{CarvajalStablerFsignaturefinitemorphisms}. In a nutshell, if $g\: \Spec A \to \Spec R$ is a finite morphism and $\phi\: F^e_* M \to M$ is a Cartier action, then we define a map
\[
\phi^! \: F_*^e g^! M \xrightarrow{F^e_* \mu \mapsto \phi \circ F^e_* \mu \circ F^e} g^! M
\]
for all $\mu \in g^! M \coloneqq \Hom_R(S,M)$. In the next proposition, we provide certain compatibility between these two constructions. This will be essential in the next section.

\begin{proposition}[{\cf \cite[Lemma 4.5]{stablerunitftestmodules}}]
\label{le.cartericompatibility}
Let $f\: X \to Y$ be  a regular $F$-finite morphism of locally noetherian schemes and $g\: Z \to Y$ a finite morphism. Consider a cartesian diagram
\[
\xymatrix{W\ar[r]^{g'} \ar[d]_{f'} &X \ar[d]^f\\ Z \ar[r]^g& Y}
\]
The canonical natural isomorphism $f'^! g^! \to g'^! f^!$ extends to the category of Cartier modules.
\end{proposition}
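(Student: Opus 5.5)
The plan is to reduce to an affine, local computation and check that the natural isomorphism intertwines the Cartier structures there. First I will make the natural isomorphism $f'^! g^! \to g'^! f^!$ explicit. By \autoref{co.ffiniteregularbasechange}, $f'$ is regular and $F$-finite. Since $\Omega_{f'} = g'^* \Omega_f$, we have $\omega_{f'} = g'^*\omega_f$. Thus
\[
f'^! g^! \sF = g'^*\omega_f \otimes f'^* g^! \sF \quad \text{and} \quad g'^! f^! \sF = g'^!\big(\omega_f \otimes f^*\sF\big).
\]
The isomorphism is the composite of the flat base change map $f'^* g^! \to g'^! f^*$ of \autoref{le.shriekisofiniteflat} with the projection formula $g'^*\omega_f \otimes g'^! \to g'^!(\omega_f \otimes -)$, which holds because $\omega_f$ is invertible. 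The base change map needs flatness of $f$ rather than of $g$; since $f$ is flat and $g_*\sO_Z$ is coherent, the map $\Hom_R(A,M)\otimes_R S \to \Hom_S(A\otimes_R S, M\otimes_R S)$ is an isomorphism. Both maps are local, so it suffices to check compatibility with Cartier actions on affine charts $Y=\Spec R$, $X=\Spec S$, $Z=\Spec A$, $W = \Spec(A\otimes_R S)$, where $S$ admits a $d/p$-basis $x_1,\dots,x_n$ over $R$ (\autoref{le.diffbasisexist}).

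Next I will verify the key fact that $x_1\otimes 1,\dots,x_n\otimes 1$ is a $d/p$-basis of $A\otimes_R S$ over $A$. Being a $d$-basis is stable under base change, and \autoref{pbasisdiffbasisequivalence} then makes it a $p$-basis. I will also check that the corresponding generator satisfies $\Phi^e_{f'} = A \otimes_R \Phi^e_f$ under the identification $(A\otimes_R S)_{F^e_*A} = A\otimes_R S_{F^e_*R}$ and $F^e_*(A\otimes_R S) = F^e_*A \otimes_{F^e_*R} F^e_*S$. This holds because the monomial basis and its dual basis are base changed verbatim. In this way, formula \eqref{eqn.DefOfPhiUpperShriek} for $f'$ is expressed in terms of the same decomposition $\Phi^e_f(F^e_* s) = \sum_i s_i\otimes F^e_*r_i$.

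Then I will compare the two Cartier actions elementwise. Take $\phi\colon F^e_*M\to M$, an element $\mu\in\Hom_R(A,M)$, and $s\in S$; the generator $\mathrm{d}\bm x$ will be suppressed. On $f'^!g^!M$, the element $F^e_*(s\,\mathrm{d}\bm{x}\otimes\mu)$ goes to $\sum_i s_i\,\mathrm{d}\bm x\otimes \phi\circ F^e_*(r_i\mu)\circ F^e$. For this I first apply the formula to general elements $(a\otimes s)\,\mathrm{d}\bm x\otimes \mu$, using $\Phi^e_{f'} = A\otimes\Phi^e_f$ and the Frobenius-semilinear absorption of $a$ into $\mu$. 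On $g'^!f^!M$, the image of $s\,\mathrm{d}\bm x\otimes\mu$ is the homomorphism $a\otimes t\mapsto ts\,\mathrm{d}\bm x\otimes\mu(a)$. The finite-cover action precomposes it with Frobenius on $A\otimes_R S$ and postcomposes with $\phi^!$ for $f$. Evaluating at $a\otimes t$ gives $\phi^!\big(F^e_*(t^q s\,\mathrm{d}\bm x\otimes \mu(a^q))\big)$. This equals $t\sum_i s_i\,\mathrm{d}\bm x\otimes\phi(F^e_*r_i\mu(a^q))$ by $S_{F^e_*R}$-linearity of $\Phi^e_f$, which agrees with the first expression evaluated at $a\otimes t$. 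Finally, the construction is compatible with the graded structure, so it passes from single maps $\phi$ to Cartier $\sC$-module structures and to morphisms of Cartier modules.

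I expect the main obstacle to be the careful bookkeeping in the middle step. One has to handle general elements $\sum_j a_j\otimes s_j$ of $A\otimes_R S$ and confirm that $\Phi^e_{f'}$ really is the base change of $\Phi^e_f$, including how $F^e_*A$ sits inside $(A\otimes_RS)_{F^e_*A}$. I also need to make sure that the isomorphism of \autoref{le.shriekisofiniteflat}, used with the roles of flat and finite maps swapped, matches the explicit evaluation map above. If that identification becomes messy, I would instead check the compatibility by adjunction, using the trace $g'_*g'^!\to \mathrm{id}$ and the fact that $\varkappa^e$ commutes with flat base change.
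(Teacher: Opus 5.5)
Your proposal is correct and follows essentially the same route as the paper. Both arguments write the canonical isomorphism as $\Hom_R(A,M)\otimes_R\omega_f\to\Hom_S(A\otimes_RS,M\otimes_R\omega_f)$, using flatness of $f$ and invertibility of $\omega_f$. Both then reduce to a chart with a $d/p$-basis and check elementwise that the two Cartier actions send $F^e_*(\mu\otimes s)$ to the same map $a\otimes s'\mapsto\sum_i\phi(F^e_*r_i\mu(a^q))\otimes s_is'$.

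You also make explicit two points the paper uses only tacitly. First, you verify that $\Phi^e_{f'}$ is the base change of $\Phi^e_f$. Second, you observe that \autoref{le.shriekisofiniteflat} does not apply verbatim, since $g$ need not be flat.
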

\begin{proof}
We start by observing that $f'$ satisfies the same hypothesis as $f$ (\autoref{co.ffiniteregularbasechange}) and likewise for $g'$ and $g$. Noting that $\omega_{f'} = g'^* \omega_f$ and $f^*g_* \sO_Z = g'_* f'^*\sO_Z = g'_* \sO_W$, the natural isomorphism is given by the canonical maps of $\sO_W$-modules
\begin{align*}
\ssHom_Y(g_* \sO_Z, \sF)\otimes_{\sO_Y} \omega_f = f^*\ssHom_Y(g_* \sO_Z, \sF)\otimes_{\sO_X} \omega_f 
&\xrightarrow{\sim} \ssHom_Y(g'_* \sO_W, f^*\sF)\otimes_{\sO_X} \omega_f \\
&\xrightarrow{\sim} \ssHom_X(g'_* \sO_W, \omega_f \otimes f^* \sF),
\end{align*}
where the first arrow is an isomorphism as $f$ is flat and $g$ is finite, while the second one is an isomorphism as $\omega_f$ is invertible. More succinctly, if we have affine charts $f \: \Spec S \to \Spec R$ and $g \: \Spec A \to \Spec R$ with $B \coloneqq A \otimes_R S$, the natural isomorphism of $B$-modules is
\[
\Hom_R(A,M) \otimes_R \omega_f \xrightarrow{\mu \otimes \delta \mapsto [a \otimes s \mapsto \mu(a) \otimes s\delta ]} \Hom_S(B,M \otimes_R \omega_f).
\]
In order to show the compatibility of this natural isomorphism with Cartier structures, let us fix an $R$-linear map $\phi \: F^e_* M \to M$. We may work locally and assume that $f$ admits a $d/p$-basis $x_1,\ldots,x_n\in S$, and we may write $\omega_f = S$ for notation ease. Then, by \autoref{eqn.DefOfPhiUpperShriek}, the induced Cartier action on $\Hom_R(A,M) \otimes_R S$
is given by
\[
\phi^! \: F^e_*(\mu \otimes s) \mapsto \sum_i (\phi \circ F^e_* r_i\mu \circ F^e) \otimes s_i
\]
where $\Phi^e_f(F^e_{f*}s) = \sum_i s_i \otimes F^e_{R*} r_i$. Similarly, the Cartier action on $
\Hom_S(B,M \otimes_R S)$ is
\[
\phi^!\: \nu \mapsto \Biggl[b \mapsto \sum_j\sum_i \phi(F^e_* r_{ji} m_j) \otimes s_{ji} \Biggr]
\]
where $\nu(b^q) = \sum_j m_j \otimes s_i$ and  $\Phi^e_f(F^e_{f*}s_j) = \sum_i s_{ji} \otimes F^e_{R*} r_{ji}$.

We need to show that the following diagram is commutative
\[
\xymatrix{
F^e_* \Hom_R(A,M) \otimes_R S \ar[r]^-{\phi^!} \ar[d]^-{\sim} & \Hom_R(A,M) \otimes_R S \ar[d]^-{\sim}\\
F^e_* \Hom_S(B,M \otimes_R S) \ar[r]^-{\phi^!} & \Hom_S(B,M \otimes_R S)
}
\]
Let $F^e_*(\mu \otimes s)$ be an element in the upper-left corner with $\Phi^e_f(F^e_{f*}s) = \sum_i s_i \otimes F^e_{R*} r_i$. Following either path, the above tells us that its image is the map
\[
a \otimes s' \mapsto \sum_i \phi(F^e_* r_i \mu(a^q)) \otimes s_i s'.
\]
The claimed compatibility then holds.
\end{proof}
Finally, we verify that our construction is compatible with the factorization in \autoref{co.regularpbasisfactorization}.

\begin{proposition}
\label{le.regularsmoothpristinefactorizationcompatible}
Let $f = \Spec \theta\: \Spec S \to \Spec R$ be a regular homomorphism of noetherian rings that admits a $p$-basis $x_1,\ldots, x_n \in S$. Let 
\[
\Spec S \xrightarrow{h} \mathbb{A}^n_R \xrightarrow{g} \Spec R
\] 
with $h$ pristine be a factorization of $f$ as in \autoref{co.regularpbasisfactorization}. Let $\sC$ be a Cartier algebra on $R$. If $M$ is a $\sC$-module, then $f^! M$ and $h^! g^! M = h^* g^! M$ are naturally isomorphic as $f^\ast \sC$-modules. 
\end{proposition}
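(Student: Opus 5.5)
The plan is to produce an explicit isomorphism of underlying modules and then check, using the local formula \autoref{eqn.DefOfPhiUpperShriek}, that it intertwines the Cartier actions. Write $T = R[t_1,\ldots,t_n]$ with $h\colon T \to S$, $t_i \mapsto x_i$. Since $h$ is pristine, $F_h$ is an isomorphism, so $\Omega_h = 0$ and $\omega_h = \sO$. Hence $h^! = h^*$. By the transitivity sequence (exact on the left because $h$ is formally étale; cf.\ the proof of \autoref{pro.fsmoothdbasisfactorization}), we get $S \otimes_T \Omega_{T/R} \cong \Omega_{S/R}$ via $\mathrm{d}t_i \mapsto \mathrm{d}x_i$. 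Taking determinants gives $h^*\omega_g \cong \omega_f$, with $\mathrm{d}t_1\wedge\cdots\wedge\mathrm{d}t_n \mapsto \mathrm{d}x_1\wedge\cdots\wedge \mathrm{d}x_n$. Tensoring with $M$ yields the natural isomorphism
\[
h^* g^! M = S\otimes_T (\omega_g \otimes_R M) \xrightarrow{\sim} \omega_f \otimes_R M = f^! M,
\]
which is also compatible with $h^*g^*\sC = f^*\sC$.

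Next, I will compare the two Cartier actions on a homogeneous $\phi \in \sC_e$ acting on $M$. On $f^!M$, by \autoref{eqn.DefOfPhiUpperShriek}, the action is $F^e_*(s\,\mathrm{d}\bm{x}\otimes m) \mapsto \sum_i s_i\,\mathrm{d}\bm{x}\otimes \phi(F^e_* r_i m)$, where $\Phi^e_{f,\bm{x}}(F^e_* s) = \sum_i s_i\otimes F^e_*r_i$. On $g^!M$, the same formula holds with the $p$-basis $t_1,\ldots,t_n$ of $T$ over $R$ and $\Phi^e_{g,\bm t}$. For the pullback along the pristine $h$, I will use that $h^! = h^*$ carries the Cartier action induced by the isomorphism $F^e_{S*}S \cong S\otimes_T F^e_{T*}T$ (as $F^e_h$ is an isomorphism), together with $h^*$ of the action on $g^!M$. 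This is the special case of our construction with $n=0$ and $\Phi^e_h = \mathrm{id}$. So for $s\in S$ we first write $F^e_*s = \sum_j \sigma_j \otimes F^e_* \tau_j$ with $\sigma_j\in S$, $\tau_j \in T$, and then apply $\Phi^e_{g,\bm t}$ to each $\tau_j$.

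The key step is therefore the identity
\[
\Phi^e_{f,\bm x} = (\mathrm{id}_S\otimes \Phi^e_{g,\bm t}) \circ (F^e_{h})^{-1}\text{-identification},
\]
that is, $\Phi^e_{f,\bm{x}}$ corresponds to $S\otimes_T \Phi^e_{g,\bm t}$ under $F^e_*S \cong S\otimes_T F^e_*T$ and $S_{F^e_*R} \cong S\otimes_T T_{F^e_*R}$. I expect this to be the main obstacle, though it should only need bookkeeping. Under these identifications, the basis $\{F^e_*\bm t^{\bm i}\}$ of $F^e_*T$ over $T_{F^e_*R}$ base changes to the basis $\{F^e_*\bm x^{\bm i}\}$ of $F^e_*S$ over $S_{F^e_*R}$ from \autoref{cor.StandardCartierOperators}, because $h(t_i) = x_i$. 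Dual bases are preserved by flat (indeed free) base change, as in \autoref{le.shriekisofiniteflat}, so $(F^e_*\bm t^{\bm{q-1}})^\vee$ pulls back to $(F^e_*\bm x^{\bm{q-1}})^\vee$. I will verify the cartesian square identifying $S_{F^e_*R}$ with $S \otimes_T T_{F^e_* R}$ from the iterability of Frobenius (\autoref{rem.IterabilityOfFrobenius}) applied to $f = g\circ h$ and the pristineness of $h$.

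With this identity in hand, the expressions $\sum_i s_i\otimes F^e_* r_i$ agree on both sides, so the two Cartier actions coincide under the module isomorphism. Compatibility with the maps $f^*\sC \to \sC_{f^!M}$ then follows by adjunction. Naturality in $M$ is clear from the construction.
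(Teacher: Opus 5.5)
Your proposal is correct and follows the paper's proof. Both identify $f^!M$ with $h^*g^!M$ via $\mathrm{d}\bm t\mapsto \mathrm{d}\bm x$ and compare the local formulas of \autoref{le.explicitrelativecartierstructureregular}. The bookkeeping the paper leaves implicit is exactly your identity $\Phi^e_{f,\bm x}\circ F^e_h = S\otimes_T\Phi^e_{g,\bm t}$. It follows from the factorization $F^e_f = F^e_h\circ(S\otimes_T F^e_g)$ of the relative Frobenius of $f=g\circ h$; iterability is only needed to see that $F^e_h$ is an isomorphism for all $e$.

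The one difference is in how $g^!M$ is handled. The paper also invokes \cite[Theorem 5.5 (c)]{BlickleStablerFunctorialTestModules} for the Cartier structure on $g^!M$, which matches it with the Blickle--St\"abler smooth pullback used later in \autoref{claim.mainthm}. You instead read $g^!$ in the sense of this paper's own construction, for which \autoref{le.explicitrelativecartierstructureregular} alone suffices.
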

\begin{proof}
We have a natural isomorphism $f^\ast \sC \cong h^\ast g^\ast \sC$ that we use to identify them. By definition, $f^! M = \omega_f \otimes_S f^\ast M$ and $h^! g^! M =\omega_h  \otimes_S h^\ast g^\ast M$. Write $t_1, \ldots, t_n$ for a set of variables of $g$. Since $\omega_g = R[t_1,\ldots,t_n]$ we may also naturally identify the $S$-modules $f^!M$ and $h^!g^! M$. It only remains to show that the Cartier action is compatible with this identification. This follows from \autoref{le.explicitrelativecartierstructureregular} and \cite[Theorem 5.5 (c)]{BlickleStablerFunctorialTestModules}.
\end{proof}

\begin{proposition}
\label{pro.shriekfullhomset}
Let $f \: \Spec S \to \Spec R$ be a regular homomorphism of noetherian $F$-finite rings such that $\omega_f$ is free. There is a non-canonical isomorphism $f^\ast \sC_M \cong \sC_{f^!M}$ for a finitely generated $R$-module $M$.
\end{proposition}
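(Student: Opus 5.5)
We compare the two Cartier algebras degree by degree and show that the map $\phi \mapsto \phi^!$ of the previous subsection, after extending scalars to $S$, gives the isomorphism. In degree $0$ both sides are $S$. For $e \geq 1$ we must identify
\[
f^*\Hom_R(F^e_*M,M) = \Hom_R(F^e_*M,M)\otimes_R S \quad\text{and}\quad \Hom_S(F^e_*(f^!M), f^!M).
\]
Since $\omega_f$ is free, we fix a generator and write $f^!M = M\otimes_R S$. This choice is the source of non-canonicity, since changing the generator by a unit $u$ twists by $u^{1-q}$.

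The first step is to rewrite the target. Using $F^e_{S*} = G^e_{f*}F^e_{f*}$ and the adjunction for the finite flat map $F^e_f$, we get
\[
\Hom_S(F^e_{S*}(M\otimes_R S), M\otimes_R S) \cong \Hom_{S_{F^e_*R}}\bigl(F^e_{f*}(S\otimes_R M)\ \text{viewed over } S_{F^e_*R},\ G^{e*}_f(M\otimes_R S)\bigr).
\]
Here $G^{e*}_f(M\otimes_R S) = F^e_*M \otimes_R S = M\otimes_R S_{F^e_*R}$ up to the module structure. We then use
\[
F^e_{f*}(f^*M) = F^e_{f*}F^{e*}_f(f^{(q)*}M) \cong F^e_{f*}\sO_X \otimes f^{(q)*}M
\]
(the source is $S\otimes_R M$ viewed through $F^e_{f*}$, and $S\otimes_{R} M = S\otimes_{S_{F^e_*R}} (S_{F^e_*R}\otimes_{F^e_*R} F^e_*M)$). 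By duality for the finite flat map $F^e_f$, \autoref{thm.regulardualizing} and \autoref{le.relativeCartierGenerates}, every such homomorphism factors uniquely as $F^e_{f*}(s\cdot -)$ followed by the relative Cartier operator $\varkappa^e$ tensored with an $S_{F^e_*R}$-linear map $S_{F^e_*R}\otimes F^e_*M \to S_{F^e_*R}\otimes M$. In other words, the target is identified with $F^e_*S \otimes_{S_{F^e_*R}} \Hom_{S_{F^e_*R}}(f^{(q)*}F^e_*M, f^{(q)*}M)$, with $F^e_*S$ acting through precomposition.

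The second step is flat base change for Hom. Since $f^{(q)}\colon \Spec S_{F^e_*R}\to \Spec F^e_*R$ is flat and $F^e_*M$ is finitely presented (because $R$ is $F$-finite and noetherian),
\[
\Hom_{S_{F^e_*R}}(f^{(q)*}F^e_*M, f^{(q)*}M) \cong \Hom_R(F^e_*M,M)\otimes_{F^e_*R} S_{F^e_*R}.
\]
Combining this with the first step, the target becomes $\Hom_R(F^e_*M,M)\otimes_{F^e_*R} F^e_*S$. Because of the bimodule convention $r\cdot\varphi = \varphi\cdot r^q$, this is exactly the pullback $f^*\sC_{M,e}$ of the right $R$-module $\sC_{M,e}$ along $R\to S$, where the right action is the one through $F^e_*$.

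The third step is to check multiplicativity, meaning that the identification is compatible with composition of Cartier maps. The explicit formula \autoref{eqn.DefOfPhiUpperShriek} shows that the map is $\phi\otimes s \mapsto \phi^!\cdot s$. It is compatible with products because the $\alpha_e$ compose according to the recursion in \autoref{thm.regulardualizing} (equivalently, $\Phi^{e+e'} = \Phi^{e'}\circ F^{e'}_*\Phi^{e}$ for the $p$-basis).

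I expect the main obstacle to be the first step. It has to be done globally, without assuming a $p$-basis, by using only $\alpha_e$ and \autoref{le.relativeCartierGenerates}. It also requires careful bookkeeping of which $S$-structure (left versus right, that is $s$ versus $s^q$) is used when identifying $\Hom_S(F^e_{S*}\,\cdot,\cdot)$ with the base-changed Hom. If the global argument becomes messy, I would first check the isomorphism on charts with a $d/p$-basis, where $\Phi^e_f$ generates and the decomposition of \autoref{cor.StandardCartierOperators} makes it explicit. I would then use freeness of $\omega_f$ to see that the chart isomorphisms glue after the fixed trivialization.
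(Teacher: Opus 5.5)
Your final formula $\Hom_R(F^e_*M,M)\otimes_{F^e_*R}F^e_*S$ is the correct one. However, Step 1 as written is wrong, and it is exactly where the content of the proposition lies.

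To move $G^e_{f*}$ out of the first slot of $\Hom_S(G^e_{f*}F^e_{f*}(-),-)$ you need the \emph{right} adjoint of $G^e_{f*}$, namely $G^{e,!}_f=\Hom_S(S_{F^e_*R},-)$. You used $G^{e*}_f$, which is the left adjoint. Your description of $G^{e*}_f f^*M$ is also off: $G^{e*}_f f^*M\cong f^{(q)*}F^{e*}_RM$ is built from the Frobenius pullback $M\otimes_R F^e_*R$, not from $F^e_*M$. Since $R$ is not assumed regular, $G^e_f$ (a base change of $F^e_R$) need not be flat, so $G^{e,!}_f$ is not a twist of $G^{e*}_f$.

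This is not a matter of ``module structure''. Already for $f=\mathrm{id}$ and $M=R$, your Step 1 would give $\Hom_R(F^e_*R,R)\cong F^e_*R$ as $F^e_*R$-modules. That fails, e.g.\ for normal $R$ that are not $\mathbb{Q}$-Gorenstein. For the same reason, the second slot $f^{(q)*}M$ in your Step 2 is the wrong object.

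The missing ingredient is the base-change isomorphism $G^{e,!}_f f^*\cong f^{(q)*}F^{e,!}_R$. This is flat base change of $\Hom$ along $f$: $G^e_f$ is the base change of the finite map $F^e_R$, and $F^e_*R$ is finitely presented. Combine it with the adjunction $\Hom_{F^e_*R}(F^e_*M,F^{e,!}_RM)=\Hom_R(F^e_*M,M)$. With these, Step 2 should read
\[
\Hom_{S_{F^e_*R}}\bigl(f^{(q)*}F^e_*M,\,f^{(q)*}F^{e,!}_RM\bigr)\cong \Hom_R(F^e_*M,M)\otimes_{F^e_*R}S_{F^e_*R}.
\]
Since $F^e_{f*}\sO_X$ is locally free, you can split off $\Hom_{S_{F^e_*R}}(F^e_{f*}\sO_X,S_{F^e_*R})\cong F^e_{f*}\omega_f^{1-q}\cong F^e_*S$. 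This uses $\alpha_e$ from \autoref{thm.regulardualizing} together with your trivialization of $\omega_f$, or equivalently \autoref{le.relativeCartierGenerates}. Tensoring with this piece then yields your stated answer.

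That base change is precisely the step $G^{e,!}_f f^!\cong f^{(q)!}F^{e,!}$ in the paper's proof. The paper's route is as follows:
\begin{enumerate}
\item It applies the adjunction for the absolute Frobenius $F^e_S$ first.
\item It factors $F^{e,!}_S=F^{e,!}_fG^{e,!}_f$.
\item It uses $\gamma_e$ in the form $F^{e,!}_f f^{(q)!}\cong f^!$.
\end{enumerate}
This reduces everything to $\Hom_S(f^*M,f^*F^{e,!}_RM)\cong f^*\Hom_R(M,F^{e,!}_RM)$, with no tensor splitting needed.

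Your Step 3 (multiplicativity) goes beyond the paper, which only establishes the degreewise isomorphisms.
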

\begin{proof}
We have a non-canonical natural isomorphism $f^! \cong f^*$, which depends only on the choice of an isomorphism $\omega_f \cong S$. The assertion boils down to:
\begin{align*} 
\Hom_S(F_\ast^e f^! M, f^!M) \cong \Hom_S(f^!M, F^{e,!} f^! M) &= \Hom_S(f^!M, F^{e,!}_f G^{e,!}_f f^!M)\\
& \cong  \Hom_S(f^!M, F^{e,!}_f f^{(q)!} F^{e,!} M) \\
& \cong  \Hom_S(f^!M, f^! F^{e,!} M)\\
 &\cong f^*  \Hom_R(M, F^{e,!} M)\\
 &\cong f^* \Hom_R(F^e_*M,  M).
\end{align*}
\end{proof}

\section{Commutativity of Regular Pullbacks with Functorial Test Modules}
\label{Testmodulesregularpullbacks}

Before turning to the proof of our main result, we first review several concepts from the theory of Cartier modules. Since all of these constructions are compatible with localization, it suffices to formulate them in the context of an affine scheme.

\begin{definition}
\label{def.testmodulesummary}
Let $(M,\sC)$ be a Cartier module over an $F$-finite ring $R$.
\begin{enumerate}[(a)]
\item By \cite[Proposition 2.13]{BlickleTestIdealsViaAlgebras}, the descending chain \[M \supset \sC_+ M \supset (\sC_+)^2 M \supset \cdots\] stabilizes. We denote its stable member by $\underline{M}$ or $\sC_+^e M$ for $e \gg 0$. This is also called the \emph{non-$F$-pure submodule} and is denoted by $\upsigma(M)=\upsigma(M, \sC)$ in the literature. The Cartier module $M$ is called \emph{nilpotent} if $\underline{M} = 0$. A homomorphism $\varphi\: M \to N$ of $\sC$-modules is called a \emph{nil-isomorphism} if both $\ker \varphi$ and $\coker \varphi$ are nilpotent.
\item We say that $M$ is \emph{$F$-pure} if $M = \underline{M}$ (equivalently $\sC_+ M = M$). In that case, $\Ann_R M$ is a radical ideal (\cite[Lemma 2.19]{BlickleTestIdealsViaAlgebras}) and one may replace $R$ with $R/\Ann_R M$ and assume that $\Supp M = \Spec R$.
\item{Note that $H^0_\p(M)$ is a $\sC$-module. We write $\Assoc M$ for the set of associated primes of the underlying $R$-module. The set of associated primes of the Cartier module $M$ is denoted by $\Ass M$, which consists of those $\p \in \Assoc M$ for which $H^0_\p(M)_\p$ is not nilpotent.}
\item The \emph{test module} $\uptau(M, \sC)$ of $(M, \sC)$ is defined as the smallest $\sC$-submondule $N \subset M$ such that $H^0_\p(N)_\p \subset H^0_\p(M)_\p$ is a nil-isomorphism for all $\p \in \Ass(M)$. We say that $M$ is \emph{$F$-regular} if $\uptau(M, \sC) = M$.
\item Letting $\Ass(M)=\{\p_1, \ldots, \p_n\}$, elements $c_1, \ldots, c_n \in R$ are called a \emph{sequence of test elements} if $c_i \notin \p_i$ and $\upsigma(H^0_{\p_i}(M))_{c_i}$ is $F$-regular for all $i$. If $\uptau(M, \sC)$ exists, then
\[ \uptau(M, \sC) = \sum_{i=1}^n \sum_{e \geq e_0} \sC_e c_i \underline{H^0_{\p_i}(M)}\] for any $e_0 \geq 0$. See \cite[Theorem 3.1]{BlickleStablerFunctorialTestModules}.
\end{enumerate}
\end{definition}

\begin{remark}
    In computing the test module, one may replace $M$ by $\underline{M}$ and thus assume that $\Supp M = \Spec R$ (by replacing $R$ with $R/\Ann M$). Then, if all associated primes of $M$ are minimal, the computation of $\uptau(M)$ simplifies (and we also get an existence result): $\uptau(M, \sC)$ exists if and only if there is $c$ avoiding every minimal prime of $M$ such that $M_c$ is $F$-regular. In this case, one has $\uptau(M, \sC) = \sC c^t M$ for all $t \geq 1$ (see \cite[Theorem 3.11]{BlickleTestIdealsViaAlgebras}.  We refer the reader to \cite{BlickleStablerFunctorialTestModules} for a more elaborate discussion concerning test modules.
\end{remark}

\begin{lemma}
\label{le.cartieractionshriek}
Let $f\: X \to Y$ be a regular morphism of $F$-finite locally noetherian schemes. If $\sF$ is a $\sC_Y$-module, then $\sC_{X+} f^! \sF = f^! \sC_{Y+} \sF$, where $\sC_X = f^\ast \sC_Y$. Put differently, $\upsigma(f^! \sF, \sC_X) = f^! \upsigma(\sF, \sC_Y)$.
\end{lemma}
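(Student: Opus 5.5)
The statement is local, so I would work on affine charts. By \autoref{le.diffbasisexist} we may take $f=\Spec\theta\colon \Spec S\to\Spec R$ with a $d/p$-basis $x_1,\ldots,x_n\in S$. Trivialize $\omega_f=S\cdot \mathrm{d}x_1\wedge\cdots\wedge\mathrm{d}x_n$, so $f^!M=S\otimes_R M$ for $M=\sF(\Spec R)$. It then suffices to show, for each $e\ge 1$, the equality of $S$-submodules
\[
\sC_{X,e}\,(S\otimes_R M)=S\otimes_R \sC_{Y,e} M .
\]
The second statement follows by iterating this for $(\sC_+)^e$ and using that $f$ is flat, so $f^!$ commutes with the descending chain and with its stable member.

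\textbf{Containment $\subseteq$.} Since $\sC_X=f^*\sC_Y$, the degree-$e$ part $\sC_{X,e}$ is generated as a right $S$-module (equivalently as a left one) by the maps $\phi^!$ with $\phi\in\sC_{Y,e}$. So $\sC_{X,e}f^!M$ is spanned by elements $s'\phi^!(F^e_*(s\otimes m))$. By \autoref{eqn.DefOfPhiUpperShriek} these equal
\[
s'\sum_i s_i\otimes \phi(F^e_*r_i m),
\]
which visibly lies in $S\otimes_R\sC_{Y,e}M$.

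\textbf{Containment $\supseteq$.} I need every $1\otimes\phi(F^e_*m)$ to lie in the left-hand side. The plan is to choose $s$ with $\Phi^e_f(F^e_{f*}s)=1\otimes F^e_*1$. Take $s=\bm x^{\bm{q-1}}$: by \autoref{cor.StandardCartierOperators}, $\Phi^e_f$ is the dual basis element to $F^e_*\bm x^{\bm{q-1}}$, hence sends it to $1$. Then $\phi^!(F^e_*(\bm x^{\bm{q-1}}\otimes m))=1\otimes\phi(F^e_*m)$. Multiplying by elements of $S$ gives all of $S\otimes_R\sC_{Y,e}M$.

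The only delicate point is bookkeeping. One must check that $f^*\sC$ acting on $f^!\sF$ is really generated by the $\phi^!$ together with multiplication by $S$ (via the adjunction $f^*\sC\to\sC_{f^!\sF}$). One must also check that the local computation glues; it does, since the submodules in question are intrinsic, so the equality can be verified chart by chart. I expect no real obstacle beyond confirming the surjectivity of $\Phi^e_f$ onto $1$, which \autoref{cor.StandardCartierOperators} provides.
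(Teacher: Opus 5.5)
Your proof is correct, but it takes a different route from the paper's.

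\textbf{The paper's route.} It also reduces to an affine chart with a $d/p$-basis. It then factors $f$ as a polynomial extension $R\to R[t_1,\ldots,t_n]$ followed by a formally \'etale, hence pristine, map, using \autoref{pro.fsmoothdbasisfactorization} and \autoref{co.regularpbasisfactorization}. Finally it quotes the two extreme cases, \cite[Lemma 6.1]{BlickleStablerFunctorialTestModules} for smooth maps and \cite[Lemma A.1]{CarvajalRojasStablerPristineMorphisms} for pristine ones. These are tied together through the identification $f^!\cong h^!g^!$ of Cartier modules in \autoref{le.regularsmoothpristinefactorizationcompatible}.

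\textbf{Your route.} You compute directly with the local formula \autoref{eqn.DefOfPhiUpperShriek}. The inclusion $\sC_{X,e}f^!M\subseteq f^!\sC_{Y,e}M$ is read off from the shape of $\phi^!$, using $\phi(F^e_*r_im)=(\phi\cdot r_i)(F^e_*m)$. The reverse inclusion comes from $\Phi^e_f(F^e_{f*}\bm x^{\bm{q-1}})=1$, which is exactly the surjectivity of the relative Cartier operator.

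\textbf{Comparison.} Your argument stays inside the paper's own construction and needs neither the factorization nor the external lemmas. It also gives the degreewise equality $\sC_{X,e}f^!\sF=f^!\sC_{Y,e}\sF$ for every $e$. The paper's version is shorter and follows its recurring strategy of splitting a regular map into a smooth piece and a pristine piece, which it uses again for \autoref{claim.mainthm}.

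\textbf{A small correction.} The image of $\sC_{X,e}$ is generated by the $\phi^!$ as a \emph{right} $S$-module only. Since $s\cdot\psi=\psi\cdot s^q$, their left span is in general strictly smaller. For example, for $\F_p\to\F_p[x]$ the map $\kappa\cdot x$ is not a left multiple of the Cartier operator $\kappa$. So the parenthetical ``equivalently as a left one'' should be dropped. This does not affect your argument, which only uses that $(\phi^!\cdot s)(F^e_*y)=\phi^!(F^e_*(sy))$, so right multiples are absorbed into the argument of $\phi^!$.
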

\begin{proof}
We may assume that both $X,Y$ are affine and that $\Omega_f$ admits a differential basis. We may then use \autoref{pro.fsmoothdbasisfactorization}, \cite[Lemma A.1]{CarvajalRojasStablerPristineMorphisms}, and \cite[Lemma 6.1]{BlickleStablerFunctorialTestModules}. 
\end{proof}

\begin{lemma}
\label{le.AssPrimesRegularMorphism}
Let $f\: \Spec S \to \Spec R$ be a regular morphism of $F$-finite noetherian schemes. Let $M$ be a $\sC$-module over a Cartier $R$-algebra $\sC$. Then, $\Ass f^! M$ consists of those primes $\q$ that are minimal over $\p S$ for some $\p \in \Ass M$.
\end{lemma}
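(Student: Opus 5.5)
Since $f$ is flat, the plan is to reduce to the classical description of associated primes under flat base change and then check the nilpotence condition. Replacing $\omega_f$ by a free module does not affect associated primes or local cohomology, so locally $f^!M \cong M \otimes_R S$ as $S$-modules. By the standard flat base change result for associated primes, $\Assoc_S(M\otimes_R S) = \bigcup_{\p \in \Assoc M} \Assoc_S(S/\p S)$. Because $f$ is regular, the fibers $S\otimes_R \kappa(\p)$ are regular, hence reduced. Thus $S/\p S$ is flat over the domain $R/\p$ with reduced generic fiber, so its associated primes are exactly the minimal primes of $\p S$. Every $\q$ in the right-hand side of the statement therefore lies in $\Assoc f^!M$ and contracts to some $\p\in\Assoc M$, with $\q$ minimal over $\p S$. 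It remains to show that, for such $\p,\q$, the prime $\q$ is Cartier-associated (i.e.\ $H^0_\q(f^!M)_\q$ is not nilpotent) exactly when $\p$ is.

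For this, I will compare $H^0_\q(f^!M)_\q$ with $f^!\bigl(H^0_\p(M)_\p\bigr)$. By flatness, $H^0_\p(M)\otimes_R S = H^0_{\p S}(M\otimes_R S)$. Since $\q$ is minimal over $\p S$, we have $\sqrt{\p S_\q}=\q S_\q$, so after localizing at $\q$ this equals $H^0_{\q}(f^!M)_\q$. Hence $H^0_\q(f^!M)_\q \cong \bigl(f^! H^0_\p(M)_\p\bigr)_\q$, where $f^!$ here means the base change along the regular map $R_\p\to S_\q$; note that localization preserves regularity and $F$-finiteness. This identification should be compatible with the Cartier structures. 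The reason is that the Cartier action on $f^!$ is given by the local formula \autoref{eqn.DefOfPhiUpperShriek}, which commutes with localization and with taking $\p$-torsion: $\phi^!$ sends torsion to torsion because $\phi(F^e_* r_i m)$ is killed by a power of $\p$ whenever $m$ is.

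Set $N = H^0_\p(M)_\p$ and $g\colon \Spec S_\q \to \Spec R_\p$. Nilpotence can be read off from the non-$F$-pure submodule, and by \autoref{le.cartieractionshriek} we have $\upsigma(g^!N) = g^!\upsigma(N)$. Now $g$ is a local flat map, hence faithfully flat, and $\omega_g$ is invertible. So $g^!\upsigma(N) = 0$ if and only if $\upsigma(N)=0$, that is, $g^!N$ is nilpotent if and only if $N$ is. This gives both inclusions.

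The main obstacle is the Cartier-equivariance of the identification $H^0_\q(f^!M)_\q \cong g^!N$, together with the claim that $f^!$ commutes with localization as Cartier modules. I would handle both by working on an affine chart with a $d/p$-basis and using the explicit formula \autoref{eqn.DefOfPhiUpperShriek}, as in the proof of \autoref{le.cartericompatibility}. If needed, I would first factor through a polynomial algebra followed by a pristine map (\autoref{co.regularpbasisfactorization} together with \autoref{le.regularsmoothpristinefactorizationcompatible}) and treat the two cases separately.
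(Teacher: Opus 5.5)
Your proposal is correct and follows essentially the paper's route: the same computation of $\Assoc(f^!M)$ via flat base change and reducedness of $S/\p S$, then writing $H^0_\q(f^!M)_\q$ as the pullback of $H^0_\p(M)_\p$ along a faithfully flat regular map and concluding with \autoref{le.cartieractionshriek}. The only difference is how you obtain that identification. You get it directly along $R_\p\to S_\q$ from $H^0_\p(M)\otimes_R S = H^0_{\p S}(M\otimes_R S)$ and $\sqrt{\p S_\q}=\q S_\q$, whereas the paper passes to $F$-pure parts over the fiber $f^{-1}(\p)\to\Spec\kay(\p)$ and invokes \autoref{le.cartericompatibility}. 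Your version is slightly more direct, at the cost of the routine check via \autoref{eqn.DefOfPhiUpperShriek} that the Cartier structure on $f^!$ commutes with localization, which the paper also uses implicitly.
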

\begin{proof}
We may assume $\Omega_f$ to be free. By \cite[\href{https://stacks.math.columbia.edu/tag/0312}{Tag 0312}]{stacks-project}, we have 
\[
\Assoc_S(M \otimes_R \omega_f) = \bigcup_{\p \in \Assoc(M)} \Assoc_S(\omega_f/\p \, \omega_f).
\]
Note that $\omega_f/\p \omega_f \cong S/ \p S$ and the associated primes of this $S$-module are the minimal primes of $\p S$ (by \autoref{co.ffiniteregularbasechange}, $R/\p \to S/\p S$ is regular and thus reduced, so the set of zero divisors of $S/\p S$ is the union of its minimal primes). It remains to show that if $\p \in \Assoc M$ and $\q$ is a minimal prime of $\p S$, then $H^0_\q(f^!M)_\q$  nilpotent if and only if so is $H^0_\p(M)_\p $.

Using \cite[Lemma 3.2]{BlickleStablerFunctorialTestModules}, we do the identification 
\[
\underline{H^0_\p(M)} = \underline{i^! M},
\]
where $i\: \Spec R/\p \to \Spec R$ is the closed immersion. Since localization is essentially \'etale and preserves $F$-purity, we may further identify 
\[
\underline{H^0_\p(M)_\p}=j^! \underline{i^! M},
\] where $j\: \Spec (R/\p)_\p \to \Spec R/\p$ is the localization morphism. Likewise for $\underline{H^0_\q(f^!M)_\q}$.

Consider the following commutative diagram
\[
\xymatrix{
\Spec \kay(\p) \ar[r]^j & \Spec R/\p \ar[r]^i& \Spec R\\
f^{-1}(\p) \ar[r] \ar[u]^{f''} & \Spec S/\p S \ar[u]^{f'} \ar[r] & \Spec S \ar[u]_f
}
\]
of cartesian squares, so the vertical arrows are regular. Observe that the canonical morphism  
\[
g\: \Spec (S/\q S)_\q \to f^{-1}(\p)
\] is essentially 
\'etale. Indeed, $\q$ is a minimal prime in $f^{-1}(\p)$ which, since $f''$ is regular and $\kay(\p)$ is a field, corresponds to a connected component. Thus, $g$ is a composition of a localization and an open immersion. By \autoref{le.cartericompatibility}, we have $H^0_\q(f^!M)_\q = g^! f''^!H^0_\p(M)_\p$. The composition $f'' \circ g$ is regular (in particular, flat) and surjective. Thus, \autoref{le.cartieractionshriek} implies that $H^0_\q(f^!M)_\q$ is not nilpotent if and only if $H^0_\p(M)_\p$ is not nilpotent.
\end{proof}

\begin{theorem}
\label{theo.Fregularregularpullback}
Let $f\: X \to Y$ be a regular morphism of locally noetherian $F$-finite schemes and  $\sF$ be a Cartier $\sC$-module on $Y$. Assume either that
\begin{enumerate}
\item $\Ass \sF$ consists of minimal primes of $\Supp \sC^e_+ \sF$ ($e \gg 0)$, or
\item  $\uptau(H^0_\p(\sF))$ exists for all $\p \in \Ass \sF$.
\end{enumerate}
If $(\sF,\sC)$ is $F$-regular, then $(f^! \sF,f^*\sC)$ is $F$-regular. The converse holds if $f$ is surjective.
\end{theorem}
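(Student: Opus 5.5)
The plan is to reduce to the affine case with a $d/p$-basis and then use the explicit test-element description of $\uptau$ from \autoref{def.testmodulesummary}(e). Since test modules and all constructions are compatible with localization, and by \autoref{le.diffbasisexist}, we may assume $f=\Spec\theta\colon \Spec S\to\Spec R$ with $\omega_f\cong S$ free and a $d/p$-basis $x_1,\ldots,x_n$, so that $f^!M=M\otimes_R S$ with Cartier action given by \autoref{eqn.DefOfPhiUpperShriek}. Using \autoref{le.cartieractionshriek}, we have $\upsigma(f^!M)=f^!\upsigma(M)$, so we may replace $M$ by $\underline M$. By \autoref{le.AssPrimesRegularMorphism}, $\Ass f^!M$ consists of the primes $\q$ that are minimal over $\p S$ for $\p\in\Ass M$. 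In particular, hypothesis (1) or (2) for $M$ transfers to $f^!M$. For (1), minimality of $\q$ over $\p S$ with $\p$ minimal in $\Supp\underline M$ makes $\q$ minimal in $\Supp f^!\underline M$, by flatness and going down. For (2), we use the next step.

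The key step is to show $f^!\uptau(H^0_\p(M))\subseteq\uptau(f^!M)$ and the reverse. Take a sequence of test elements $c_i\notin\p_i$ for $M$. Since $H^0_\p(M)\otimes_RS=H^0_{\p S}(f^!M)$ by flatness, the module $H^0_\q(f^!M)$ agrees with $H^0_{\p S}(f^!M)$ after localizing away from the other minimal primes of $\p S$. Moreover, $\upsigma(H^0_{\p_i}(M))_{c_i}$ being $F$-regular should imply that $f^!$ of it, namely $\upsigma(H^0_{\p_iS}(f^!M))_{c_i}$, is $F$-regular. This is the same statement over a smaller base, and it can be handled first in the case where all associated primes are minimal. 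In that case $\uptau=\sC c^tM$ by the Remark, and one checks directly that $\uptau(f^!M,f^*\sC)=f^*\sC\cdot c^t f^!M = f^!(\sC c^tM)$, using \autoref{le.cartieractionshriek} in the form $(f^*\sC)_+ f^!N=f^!\sC_+N$ applied to $N=c^tM$. The general case then follows from the formula
\[ \uptau(M,\sC)=\textstyle\sum_i\sum_{e\ge e_0}\sC_e c_i\underline{H^0_{\p_i}(M)}.\]
Applying $f^!$ termwise, and again using the $\sC_+$ compatibility, gives $\uptau(f^!M)=f^!\uptau(M)$, with the $c_i$ serving as test elements for each $\q$ over $\p_i$.

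Granting this, the theorem is immediate. If $\uptau(M)=M$, then $\uptau(f^!M)=f^!M$. Conversely, if $f$ is surjective, then $\theta$ is faithfully flat, so $f^!(M/\uptau(M))=f^!M/f^!\uptau(M)=0$ forces $\uptau(M)=M$.

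The main obstacle is the base case: verifying that $F$-regularity of $M_c$ implies $F$-regularity of $(f^!M)_c$. This requires controlling every Cartier submodule of $f^!M$, not just those pulled back from $M$. My plan is to use the factorization of \autoref{co.regularpbasisfactorization} together with \autoref{le.regularsmoothpristinefactorizationcompatible}. This reduces the problem to a polynomial extension, handled by \cite{BlickleStablerFunctorialTestModules}, and to a pristine map $h$, where $F$-regularity transfers by \cite[Lemma A.1]{CarvajalRojasStablerPristineMorphisms}. This is the same route as in \autoref{le.cartieractionshriek}.
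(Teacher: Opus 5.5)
Your architecture is the paper's. You reduce to an affine chart with a $d/p$-basis. You handle an $F$-regular module with only minimal associated primes via the factorization of \autoref{co.regularpbasisfactorization} into a polynomial extension and a pristine map; this is \autoref{claim.mainthm}. Your first attempt there, ``one checks directly that $\uptau(f^!M)=f^*\sC\cdot c^tf^!M$'', is circular, since that formula presupposes that $(f^!M)_c$ is $F$-regular, but you correctly fall back on the factorization. You then reach case (2) through the test-element formula, and the gap is in this last step. Applying $f^!$ termwise to $\sum_i\sum_{e\geq e_0}\sC_e c_i\underline{H^0_{\p_i}(M)}$ gives, by \autoref{le.cartieractionshriek} and flatness, $\sum_i\sum_{e\geq e_0}(f^*\sC)_e c_i\underline{H^0_{\p_iS}(f^!M)}$. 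The test-element formula for $f^!M$, however, runs over $\q\in\Ass f^!M$, i.e.\ over the primes minimal over the various $\p_iS$, and involves the modules $\underline{H^0_{\q}(f^!M)}$. These do not match, and two things remain unproved:
\begin{itemize}
\item[(i)] that $c_i$ is a test element for each $\q$ minimal over $\p_iS$, i.e.\ that $\underline{H^0_\q(f^!M)}_{c_i}$ is $F$-regular (the base case only gives this for $\underline{H^0_{\p_iS}(f^!M)}_{c_i}$);
\item[(ii)] the inclusion $c_i\underline{H^0_{\p_iS}(f^!M)}\subseteq\sum_{\q}\sC_+c_i\underline{H^0_\q(f^!M)}$, which is what yields $f^!\uptau(M)\subseteq\uptau(f^!M)$, the direction the theorem needs.
\end{itemize}
Your remark that $H^0_\q$ and $H^0_{\p S}$ agree after localizing away from the other minimal primes of $\p S$ does not bridge this. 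All primes minimal over $\p_iS$ contract to $\p_i$, so $c_i\in R\smallsetminus\p_i$ lies in none of them, and inverting it separates nothing.

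The paper closes the gap as follows. Since $R$ is excellent, it chooses $c_\p$ so that in addition $(R/\p)_{c_\p}$ is normal. Then $(S/\p S)_{c_\p}$ is normal, hence equal to $\prod_a(S/\q_a)_{c_\p}$, where the $\q_a$ are the minimal primes of $\p S$. Pulling back along the \'etale open immersions $j_a\colon\Spec(S/\q_a)_{c_\p}\to\Spec(S/\p S)_{c_\p}$ shows that $\underline{H^0_{\q_a}(f^!M)}_{c_\p}=j_a^!\underline{f^!H^0_\p(M)}_{c_\p}$ is $F$-regular. This gives (i) together with the localized equality $\sum_{\q\in\min\p S}\underline{H^0_\q(f^!M)}_{c_\p}=\underline{f^!H^0_\p(M)}_{c_\p}$. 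Then \cite[Lemma 2.2]{BlickleStablerFunctorialTestModules} upgrades this localized equality to the honest inclusion (ii). Summing over $\p\in\Ass M$ and applying $\sC_+$ gives $f^!\uptau(M)\subseteq\uptau(f^!M)$.

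Two smaller points. First, before applying your base case to $\underline{H^0_{\p_i}(M)}_{c_i}$ you must shrink $c_i$ so that $\p_i$ is the only associated prime of $H^0_{\p_i}(M)_{c_i}$, as in \autoref{cla.existenceoftestelements}; a test element need not achieve this. Second, your converse rests on the claimed equality $\uptau(f^!M)=f^!\uptau(M)$, so it depends on at least (i) as well. The paper instead argues the converse as in \cite[Theorem 6.5]{BlickleStablerFunctorialTestModules}.
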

\begin{proof}
We may assume that $f\: \Spec S \to \Spec R$ is affine and that there is a $d/p$-basis. If $f$ is surjective and $f^! M$ is $F$-regular, then $M$ is $F$-pure by \autoref{le.cartieractionshriek}. Hence, we may assume that $M$ is $F$-pure. Moreover, we may base-change to $R/\Ann_R(M)$ using \autoref{co.ffiniteregularbasechange} and so assume that $R$ is reduced and $\Supp M = \Spec R$.

We will use throughout the proof that for a flat morphism $g$ the operations $g^\ast$ and $H^0_I$ commute; in particular, $H^0_I$ commutes with localization. Furthermore, since $\Omega_f$ is locally free, $H^0_I(N \otimes \omega_f) \cong H^0_I(N) \otimes \omega_f$ for every $S$-module $N$. Putting this together, we have $g^! H^0_I(M) = H^0_{IB}(g^! M)$ for $g\: \Spec B \to \Spec A$ any regular $F$-finite map.

If $f$ is surjective, then we may argue just as in \cite[Theorem 6.5]{BlickleStablerFunctorialTestModules} to conclude that $f^! M$ is $F$-regular only if $M$ is $F$-regular. Since $M$ is $F$-pure, $\sC_+^e M = M$, and by our reduction, $\Supp M = \Spec R$. Thus, assuming (a), the result follows from the following claim.
\begin{claim}
\label{claim.mainthm}
If $M$ is an $F$-regular Cartier module whose associated primes are minimal, then $f^!M$ is $F$-regular.
\end{claim}
\begin{proof}[Proof of the claim] By \autoref{co.regularpbasisfactorization}, we may factor $f = g \circ h$ where $g$ is smooth, and $h$ is pristine. Apply now \cite[Lemma 4.6]{stablerunitftestmodules}, \cite[Theorem A.3]{CarvajalRojasStablerPristineMorphisms}, and \autoref{le.regularsmoothpristinefactorizationcompatible}.
\end{proof}

We now proceed with the proof assuming (b).
\begin{claim}
\label{cla.existenceoftestelements}
For every associated prime $\p$ of $M$, there is $c \in R \setminus \p$ such that $\underline{f^! H^0_\p(M)}_c$ is $F$-regular and has only minimal associated primes.
\end{claim}
\begin{proof}[Proof of the claim]
We have the following cartesian diagram 
\[
\xymatrix{\Spec S \ar[r]^f& \Spec R\\ \Spec S/\p S \ar[u]^{i'} \ar[r]^{f'} & \Spec R/\p \ar[u]^i} 
\] 
where $f'$ is regular by \autoref{co.ffiniteregularbasechange}. Note that $\Ass H^0_\p(M)$ consists of associated primes containing $\p$. Hence, we find $c \in R \setminus \p$ such that $\Ass H^0_\p(M)_c$ is just $\p$. Passing to the stable image $\sC_+^e H^0_\p(M)_c$ and possibly choosing a different $c$, we have that $\underline{H^0_\p(M)}_c$ is $F$-regular by \cite[Theorem 3.11]{BlickleTestIdealsViaAlgebras}; where we use the existence of $\uptau(H^0_\p(M))$. 

Identifying $\underline{H^0_\p(\phantom{M})}$ with $i_\ast \underline{i^! \phantom{M}}$ (see \cite[Lemma 3.2]{BlickleStablerFunctorialTestModules}) and using that $\uptau$ commutes with pushforwards along closed immersions, we have reduced the claim to the following situation: if $f\: \Spec S \to \Spec R$ is regular and $N \coloneqq i_\ast \underline{i^! M}_c$ is an $F$-pure Cartier module with only minimal associated primes for which $\uptau(N, \sC)$ exists, then there is $c' \in R$ such that $f^!N_{c'}$ is $F$-regular. This is a consequence of \autoref{claim.mainthm}.
\end{proof}
By \cite[\href{https://stacks.math.columbia.edu/tag/056J}{Lemma 056J}]{stacks-project}, we have
\[
\Supp \underline{f^!H^0_\p(M)} = \Supp f^!\underline{H^0_\p(M)} = \Spec S/\p S.
\]
Since $R$ is excellent, we find $c$ as in \autoref{cla.existenceoftestelements} for which $(R/\p R)_c$ is also normal. Then $(S/\p S)_c$ is also normal and thus of the form $(S/\p S)_c = (S/\q_1)_c \times \cdots \times (S/\q_r)_c$, where the $\q_i$ are the minimal primes of $\p S$. Write $j_a$ for the open (and closed) immersion $\Spec (S/\q_a S)_c \to \Spec (S/\p S)_c$. Since $j_a$ is \'etale we obtain via \autoref{claim.mainthm} that 
\[
j_a^! \underline{f^! H^0_\p(M)}_c = \underline{H^0_{\q_a}(f^!M)_c}
\] is $F$-regular. In particular, writing $c_\p$ for the specific choice of $c$ for $\p$ as above, the $c_\p$ with appropriate repetitions form a sequence of test elements for $f^!M$.

We also note that for $\p \in \Ass M$
\begin{equation}\label{eq.tauregulartransf1}\sum_{\q \in \min \p S} \underline{H^0_\q(f^!M)}_{c_\p} = \underline{f^! H^0_\p(M)}_{c_\p}\end{equation} where we write $\min \p S$ for the set of minimal primes of $\p S$. Since $\p S \subset \q$ for any $\q \in \min \p S$, we also have $H^0_\q(f^!M) \subset H^0_{\p S}(f^!M)$, which implies
\begin{equation} \label{eq.tauregulartransf2} \sum_{\q \in \min \p S} \sC_+ c_\p \underline{H^0_\q(f^!M)} \subset \underline{H^0_{\p S}(f^!M)}.\end{equation}
Note that \autoref{eq.tauregulartransf1} shows that localizing \autoref{eq.tauregulartransf2} at $c_\p$ yields an equality. Thus, by \cite[Lemma 2.2]{BlickleStablerFunctorialTestModules} we get an inclusion
\begin{equation}\label{eq.tauregulartransf3}c_\p \underline{H^0_{\p S}(f^!M)} \subset \sum_{\q \in \min \p S} \sC_+ c_\p \underline{H^0_\q(f^!M)}. \end{equation}
Since the elements $c_\p$ form a sequence of test elements for $f^!M$, we obtain from \cite[Theorem 3.4]{BlickleStablerFunctorialTestModules} and \autoref{eq.tauregulartransf3} by summing over all $\p \in \Ass M$ the inclusion
\begin{equation}\label{eq.tauregulartransf4}\sum_{\p \in \Ass M} c_{\p} \underline{H^0_{\p S}(f^!M)} \subset \sum_{\q \in \Ass f^!M} \sC_+ c_{f(\q)} \underline{H^0_\q(f^!M)} = \uptau(f^!M, f^\ast \sC), \end{equation} where we use \autoref{le.AssPrimesRegularMorphism} to deduce that $\{\q \mid \q \in \min \p S$ for some $\p \in \Ass M\} = \Ass f^!M$.
Since $\uptau(f^!M, f^\ast \sC)$ is a Cartier module, \autoref{eq.tauregulartransf4} then also yields the containment
\begin{equation}\label{eq.tauregulartransf5}\sum_{\p \in \Ass M} f^\ast \sC_+ c_{\p} \underline{H^0_{\p S}(f^!M)} = f^!\Biggl(\sum_{\p \in \Ass M}  \sC_+ c_{\p} \underline{H^0_{\p}(M)}\Biggr) = f^! \uptau(M, \sC) \subset \uptau(f^!M, f^\ast \sC) \end{equation}
where the first equality is due to \autoref{le.cartieractionshriek} and the second is another application of \cite[Theorem 3.4]{BlickleStablerFunctorialTestModules}—for this, recall that the $c_\p$ were chosen in such a way that they form a sequence of test elements for $M$. Since $M$ is assumed $F$-regular, we thus obtain $f^!M \subset \uptau(f^!M, f^\ast \sC)$ from \autoref{eq.tauregulartransf5} and hence equality holds.
\end{proof}

\begin{corollary}
\label{cor.tauregulartransforms}
Let $f\: X \to Y$ be a regular morphism of locally noetherian $F$-finite schemes, and $\sF$ be a Cartier $\sC$-module on $Y$.Then, $f^! \uptau(\sF, \sC) = \uptau(f^! \sF, f^\ast \sC)$ if either
\begin{enumerate}[(a)]
\item$\Ass \sF$ consists of minimal primes of $\Supp \sC^e_+ \sF$ ($e \gg 0)$, or 
\item$\uptau(H^0_\p(\sF), \sC)$ exists for all $\p \in \Ass \sF$.
\end{enumerate}
\end{corollary}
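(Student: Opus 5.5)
The plan is to rerun the argument of \autoref{theo.Fregularregularpullback} without assuming $F$-regularity of $\sF$; in fact that proof already establishes one inclusion in general. The question is local, so we may assume $f\colon \Spec S \to \Spec R$ is affine and admits a $d/p$-basis. By \autoref{le.cartieractionshriek} we have $\upsigma(f^!M) = f^!\upsigma(M)$, and test modules only depend on the $F$-pure part. Hence we replace $M$ by $\underline{M}$, and then use \autoref{co.ffiniteregularbasechange} to base-change to $R/\Ann_R M$. After these reductions $M$ is $F$-pure and $\Supp M = \Spec R$.

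\emph{Inclusion $f^!\uptau(M,\sC) \subset \uptau(f^!M, f^*\sC)$.} Under hypothesis (b), \autoref{cla.existenceoftestelements} and the discussion after it give elements $c_\p$, for $\p \in \Ass M$. These can be chosen to form simultaneously a sequence of test elements for $M$ and, with repetitions over $\min \p S$, a sequence of test elements for $f^!M$; here \autoref{le.AssPrimesRegularMorphism} identifies $\Ass f^!M$. The chain of inclusions \autoref{eq.tauregulartransf4}–\autoref{eq.tauregulartransf5} never used $F$-regularity of $M$, so it yields $f^!\uptau(M,\sC) \subset \uptau(f^!M,f^*\sC)$. Under hypothesis (a), the associated primes are minimal and there is $c$ avoiding them with $M_c$ $F$-regular, and \autoref{claim.mainthm} applies to $M_c$. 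We then argue the same way with the single element $c$, using $\uptau(M,\sC) = \sC c^t M$ from the remark after \autoref{def.testmodulesummary}.

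\emph{Reverse inclusion.} I would compute $\uptau(f^!M)$ directly from the same test elements. By \autoref{def.testmodulesummary}(e),
\[
\uptau(f^!M, f^*\sC) = \sum_{\q \in \Ass f^!M} \sum_{e\ge 1} (f^*\sC)_e\, c_{f(\q)} \underline{H^0_\q(f^!M)}.
\]
Since $\q \supset \p S$, we have $\underline{H^0_\q(f^!M)} \subset \underline{H^0_{\p S}(f^!M)}$. Combining $H^0_{\p S}(f^!M) = f^! H^0_\p(M)$ (flatness, as in the proof of \autoref{theo.Fregularregularpullback}) with \autoref{le.cartieractionshriek}, this equals $f^!\underline{H^0_\p(M)}$. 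Moreover $(f^*\sC)_+ c_\p f^!\underline{H^0_\p(M)} = f^!(\sC_+ c_\p \underline{H^0_\p(M)})$, again by \autoref{le.cartieractionshriek}; this is exactly the first equality of \autoref{eq.tauregulartransf5}. Summing over $\p$ gives $\uptau(f^!M) \subset f^!\uptau(M)$. Existence of $\uptau(f^!M)$ is part of what the test element sequence provides.

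The main obstacle is the choice of $c_\p$ that works for both $M$ and $f^!M$ at once. This needs $\underline{H^0_{\q}(f^!M)}_{c_\p}$ to be $F$-regular for every $\q \in \min \p S$, which in turn relies on the normality/product decomposition of $(S/\p S)_c$ and on \autoref{claim.mainthm}. These were established inside the proof of \autoref{theo.Fregularregularpullback}, and I would cite them verbatim rather than redo them. A minor point is the naturality of the resulting isomorphism $\uptau\circ f^! \cong f^!\circ\uptau$. It follows because both sides are submodules of $f^!\sF$, so equality is local and glues.
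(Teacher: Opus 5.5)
Your proposal is correct, but it reaches the result by a different route from the paper.

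\emph{The paper's route.} It gets $\uptau(f^!\sF, f^*\sC) \subset f^!\uptau(\sF,\sC)$ straight from the minimality in the definition of test modules. The nil-isomorphisms $H^0_\p(\uptau(\sF))_\p \subset H^0_\p(\sF)_\p$ become nil-isomorphisms at every $\q \in \Ass f^!\sF$, by the local analysis in the proof of \autoref{le.AssPrimesRegularMorphism}. For the reverse inclusion it applies \autoref{theo.Fregularregularpullback} to the $F$-regular module $\uptau(\sF,\sC)$: then $f^!\uptau(\sF)$ is $F$-regular, and so it lies inside $\uptau(f^!\sF)$.

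\emph{Your route.} You prove both inclusions with the explicit test-element formula. For $f^!\uptau(M) \subset \uptau(f^!M)$, you correctly note that \autoref{eq.tauregulartransf4}--\autoref{eq.tauregulartransf5} never use $F$-regularity of $M$. For the other direction, you use $\q \supset \p S$, $H^0_{\p S}(f^!M) = f^!H^0_\p(M)$, and \autoref{le.cartieractionshriek}. In effect you show that the chain of inclusions in the theorem's proof is an equality. Your preliminary reduction to $\underline{M}$ is also needed here, since $M$ is no longer assumed $F$-regular, and you handle it correctly.

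\emph{What each route buys.} The paper's inclusion $\uptau(f^!\sF)\subset f^!\uptau(\sF)$ needs only the definition and the fact that $f^!$ preserves nilpotence. It uses no test elements, no excellence, and no normality of $(R/\p)_c$. Your route applies the machinery only to $M$ itself. So you never need to know that $\uptau(\sF,\sC)$ again satisfies (a) or (b), which invoking \autoref{theo.Fregularregularpullback} on $\uptau(\sF,\sC)$ tacitly requires. The price is that both of your directions depend on the full simultaneous choice of the $c_\p$ from \autoref{cla.existenceoftestelements} and the discussion after it. That same choice also supplies the existence of $\uptau(f^!M)$.
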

\begin{proof}
Since $f$ is flat, $f^!$ is exact, so we have an inclusion $f^! \uptau(\sF, \sC) \subset f^! \sF$. We want to show first that $\uptau(f^! \sF, f^\ast \sC) \subset f^! \uptau(\sF, \sC)$. To this end, note that, by definition of $\uptau$, we have for any associated prime $\p$ of $\sF$ a nil-isomorphism
\[ H^0_\p\big(\uptau(\sF, \sC)\big)_\p \subset H^0_\p(\sF)_\p.\]
From this, we obtain nil-isomorphisms
\[H^0_\q\big(f^!\uptau(\sF, \sC)\big)_\q \subset H^0_\q(f^!\sF)_\q\] for all associated primes $\q$ of $f^!\sF$ (see proof of \autoref{le.AssPrimesRegularMorphism}). By definition, $\uptau(f^!\sF, f^\ast \sC)$ is the smallest submodule of $f^!\sF$ for which the inclusions
\[H^0_\q\big(\uptau(f^!\sF, f^\ast \sC)\big)_\q \subset H^0_\q(f^!\sF)_\q \] are nil-isomorphisms, so that $\uptau(f^!\sF, f^\ast \sC) \subset f^! \uptau(\sF, \sC)$. Since $f^! \uptau(\sF, \sC)$ is $F$-regular by \autoref{theo.Fregularregularpullback}, the equality holds.
\end{proof}

\begin{remark}
\label{rem.similarresults}
\autoref{theo.Fregularregularpullback} seems to contradict \cite[Theorem 4.11]{BydlonCounterexamplesBertiniTestideals}. However, the example considered there is \emph{not} smooth.
There are a number of related results in the literature:
\begin{enumerate}[(a)]
\item Hashimoto proves that $F$-regularity ascends under a regular morphism in certain circumstances; see \cite[Lemma 3.28]{HashimotoFpurehomomorphisms}. In the $F$-finite setting, our result is stronger, but Hashimoto's result does not require $F$-finiteness.

\item V\'elez proved (\cite{VelezOpennessOfTheFRationalLocus}) that for $R \to S$ a regular morphism of locally excellent rings, if $R$ is $F$-rational, then so is $S$. In the $F$-finite setting, \autoref{cor.tauregulartransforms} generalizes this to an equality of test modules.

\item More generally, Enescu proved (\cite[Theorem 2.24]{EnescuOnTheBehaviorOfFrationalRingsUnderFlatBaseChange})  (under mild additional hypotheses) the following: If $\theta\:R \to S$ is flat and both $R$ and the fibers of $\theta$ are geometrically $F$-rational, then $S$ is $F$-rational.
\end{enumerate}
In light of Enescu's result, it is natural to ask whether our hypothesis that $f$ is regular can similarly be weakened. This is indeed the case if we restrict to the finite type case (see \cite{CarvajalRojasStablerAdjointTestModules}) We note, however, that by \cite{SinghFRegularityDoesNotDeform}, the analog of Enescu's result is not true for $F$-regularity.
\end{remark}

\section{Applications to Constancy Regions of Mixed Test Modules}
\label{sec.constancyregions}

P\'erez proved that for $\kay = \bF_q$ a finite field, the set of $F$-jumping numbers for mixed test ideals over a \emph{regular} finite type $\kay$-algebra is discrete. He also showed that the constancy regions are $p$-fractal functions (see \autoref{def.pfractal} below). See \cite{perezconstancyregions}. The goal of this section is to generalize these results in two ways. First, we drop the regularity hypothesis and treat arbitrary Cartier modules. Second, we use \autoref{cor.tauregulartransforms} to relax the assumption that $\kay$ is a finite field. Given ideals $\mathfrak{a}_1, \ldots , \mathfrak{a}_n $ and $\bm{t} = (t_1, \ldots, t_n) \in \mathbb{R}_{\geq 0}^n$, we write 
\[
\uptau(M, \mathfrak{a}^{\bm{t}}) \coloneqq \uptau(M, \mathfrak{a}_1^{t_1}, \ldots, \mathfrak{a}_n^{t_n}).
\]

Before we can proceed, we need the notion of a \emph{gauge bounded} Cartier algebra. We refer the reader to \cite[Section 4]{BlickleTestIdealsViaAlgebras} for a detailed discussion. For a finite type $\kay$-algebra $R$, a \emph{gauge} is a function $R \to \mathbb{N}$ that one should think of as a replacement for the degree of a polynomial ring. For any finitely generated $R$-module $M$, it induces (non-canonically) a function $M \to \mathbb{N}$ which we also refer to as a gauge.  The result we need from this theory is that if $(M, \sC)$ is \emph{gauge bounded}, then there is a finite-dimensional $\kay$-subspace $V$ of $M$ such that any $F$-pure submodule $N\subset M$ has $R$-module generators in $V$ (\cite[Corollary 4.10]{BlickleTestIdealsViaAlgebras}). Moreover, if $(M, \sC)$ is gauge bounded, then so is $(M, \sC^{\mathfrak{a}^t})$ (\cite[Proposition 4.15]{BlickleTestIdealsViaAlgebras}). With the above assumptions on $R$, every finitely generated Cartier algebra $\sC$ is gauge bounded (\cite[Proposition 4.9]{BlickleTestIdealsViaAlgebras}).
We say that an $R$-module $M$ is generated in gauge $\leq d$ if there are module generators $m_1, \ldots, m_r$ each with gauge $\leq d$.

\begin{lemma}
\label{le.gbfinitelymanytestmodules}
\label{le.discretenessfinitefield}
Let $R$ be of finite type over $\mathbb{F}_q$, $\sC$ be a Cartier algebra, and $M$ be a gauge bounded Cartier module. Given ideals $\mathfrak{a}_1, \ldots, \mathfrak{a}_n \subset R$, the set $\left\{\uptau(M, \sC, \mathfrak{a}^{\bm{t}}) \mid \bm{t} \in [0,T]^n\right\}$ is finite for all $T \geq 0$.
\end{lemma}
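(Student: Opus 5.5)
Fix $T \geq 0$. The plan is to combine the gauge-boundedness finiteness statement \cite[Corollary 4.10]{BlickleTestIdealsViaAlgebras} with the finiteness of the base field $\mathbb{F}_q$. For each $\bm{t} \in [0,T]^n$, the test module $\uptau(M, \sC, \mathfrak{a}^{\bm{t}})$ is a $\sC^{\mathfrak{a}^{\bm{t}}}$-submodule of $M$, and I would first reduce to the case where it is $F$-pure for the mixed Cartier algebra $\sC^{\mathfrak{a}^{\bm t}}$. This holds by the description
\[
\uptau = \sum_i \sum_{e\ge e_0} \sC^{\mathfrak{a}^{\bm t}}_e c_i \underline{H^0_{\p_i}(M)}
\]
in \autoref{def.testmodulesummary}(e): since the sum runs over all $e \geq e_0$ for any $e_0$, we get $\sC_+^{\mathfrak{a}^{\bm t}} \uptau = \uptau$. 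So each test module is an $F$-pure submodule of $(M, \sC^{\mathfrak{a}^{\bm t}})$.

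The key step is to obtain a \emph{single} finite-dimensional $\mathbb{F}_q$-subspace $V \subset M$ that works simultaneously for all $\bm t\in[0,T]^n$. The point is that $F$-pure submodules should have generators in $V$. By \cite[Proposition 4.15]{BlickleTestIdealsViaAlgebras}, each $\sC^{\mathfrak{a}^{\bm t}}$ is gauge bounded, but a priori the bound depends on $\bm t$. I would inspect the proof. If $\sC$ is bounded with constant $K$, i.e.\ $\delta(\phi(m)) \leq \delta(m)/q^e + K$ for $\phi \in \sC_e$, then multiplying by an element of $\mathfrak{a}_i^{\lceil t_i q^e\rceil}$ raises the gauge by at most $\lceil t_i q^e \rceil \cdot D$. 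Here $D$ bounds the gauges of fixed generators of the $\mathfrak{a}_i$, so the constant for $\sC^{\mathfrak{a}^{\bm t}}$ is bounded by
\[
K + D\sum_i (t_i+1) \leq K + nD(T+1),
\]
uniformly on $[0,T]^n$. Running the argument of \cite[Corollary 4.10]{BlickleTestIdealsViaAlgebras} then gives a uniform gauge bound $d$: every $F$-pure submodule of any $(M,\sC^{\mathfrak{a}^{\bm t}})$, $\bm t \in [0,T]^n$, is generated in gauge $\leq d$. We take $V$ to be the span of elements of gauge $\le d$, which is finite-dimensional over $\mathbb{F}_q$.

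Finally, each $\uptau(M,\sC,\mathfrak{a}^{\bm t})$ is the $R$-submodule generated by its intersection with $V$. So it is determined by an $\mathbb{F}_q$-subspace of $V$, and since $V$ is a finite-dimensional vector space over a finite field, it is a finite set with finitely many subspaces. Hence only finitely many test modules occur.

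The main obstacle is the uniformity in the second step. One has to check that the gauge-bound constants in \cite[Proposition 4.15, Corollary 4.10]{BlickleTestIdealsViaAlgebras} depend only on an upper bound for $\bm t$. This is also exactly where the finiteness of $\mathbb{F}_q$ enters, in the last counting step.
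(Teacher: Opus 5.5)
Your proposal is correct and follows the same route as the paper. The paper also uses \cite[Corollary 4.10, Proposition 4.15]{BlickleTestIdealsViaAlgebras} to see that every $\uptau(M,\sC,\mathfrak{a}^{\bm t})$ with $\bm t\in[0,T]^n$ is determined by its intersection with the single finite-dimensional piece $M_{K/(p-1)+1+Td}$, then concludes from the finiteness of this $\mathbb{F}_q$-vector space; your justifications of $F$-purity of the test modules and of the uniformity in $\bm t$ of the gauge constant spell out what the paper leaves implicit in that citation.
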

\begin{proof}
By gauge boundedness (\cite[Corollary 4.10, Proposition 4.15]{BlickleTestIdealsViaAlgebras}), each of the modules $\uptau(M, \sC, \mathfrak{a}^{\bm{t}})$ is determined by its intersection with $M_{K/(p-1) + 1 + Td}$, where $d$ is such that each $\mathfrak{a}_i$ is generated in gauge $\leq d$ and $K$ is the gauge bound for $\sC$. Since $M_{K/(p-1) + 1 + Td}$ is an $\mathbb{F}_q$-vector space and thus a \emph{finite} set there are only finitely many possibilities for the $\uptau$.
\end{proof}

\begin{remark}
In \autoref{le.gbfinitelymanytestmodules}, we can relax the assumptions on $R$ to be essentially of finite type provided that $(M, \sC)$ is obtained via a localization $R' \to R$ from a gauge bounded pair $(M', \sC')$. This is true if $\sC$ is generated by one element.
\end{remark}

\begin{definition}[\cf {\cite[D\'efinition 4.8.4]{EGA_IV_II}}]
Let $R$ be of finite type over a field $\kay$. We say that a subfield $\el \subset \kay$ is a \emph{field of definition} for $R$ if there is a finite type $\el$-algebra $R'$ such that $R' \otimes_\el \kay = R$. Similarly, given an ideal $\mathfrak{a} \subset R$ (or an $R$-module $M$) we say that $\el$ is a field of definition for $\mathfrak{a}$ (for $M$) if there is an ideal $\mathfrak{a}' \subset R'$ (an $R'$-module $M'$) such that $\mathfrak{a}' \otimes_\el \kay = \mathfrak{a}'$ ($M' \otimes_\el \kay = M$).
\end{definition}

For the rest of this section, we will work in the following setup.
\begin{setup}
\label{set.mixedtestmodules}
Let $\kay$ be an $F$-finite field and $R$ be a finite type $\kay$-algebra with ideals $\mathfrak{a}_1, \ldots, \mathfrak{a}_n \subset R$, and a Cartier module $(M,\sC)$, with $\sC$ finitely generated, that admit a field of definition $\el$ such that: $\kay/\el$ is separable and $\el$ is is a finite separable extension of $\F_q(t_1, \ldots, t_n)$ for some $n$. This includes the case $\F_q= \el$ since $\F_q \to \kay$ is separable as $\F_q$ is perfect; see \cite[\href{https://stacks.math.columbia.edu/tag/0322}{Proposition 0322}]{stacks-project}.
\end{setup}

\begin{theorem}
\label{theo.finitenessconstacyregions}
Working in \autoref{set.mixedtestmodules}, $\left\{ \uptau(M, \sC, \mathfrak{a}^{\bm{t}}) \mid \bm{t} \in [0,T]^n \right\} $ is finite for all $T \geq 0$.
\end{theorem}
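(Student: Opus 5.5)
The plan is to reduce to the finite field case, \autoref{le.gbfinitelymanytestmodules}, in two stages: first descend from $\kay$ to the field of definition $\el$, then relate $\el$ to a finite field via the polynomial structure of $\F_q(t_1,\ldots,t_n)$. We use throughout that the mixed test module commutes with regular $F$-finite pullbacks. This follows from \autoref{cor.tauregulartransforms} applied to the Cartier algebra $\sC^{\mathfrak{a}^{\bm t}}$, together with the fact that $f^*(\sC^{\mathfrak{a}^{\bm t}}) = (f^*\sC)^{(\mathfrak{a}S)^{\bm t}}$ for $f$ flat.

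\textbf{Step 1 (descent to $\el$).} Choose $R'$, $\mathfrak{a}_i'$, $(M',\sC')$ over $\el$ with $R = R'\otimes_\el \kay$. Since $\kay/\el$ is separable, $\el\to\kay$ is regular. As both fields are $F$-finite, it is also $F$-finite, and hence so is $f\colon \Spec R\to \Spec R'$ by \autoref{co.ffiniteregularbasechange}. Here $\omega_f$ is free since $\Omega_{\kay/\el}$ is. Because $\sC'$ is finitely generated over a finite type algebra over an $F$-finite field, test modules of $H^0_\p$ exist, so hypothesis (b) of \autoref{cor.tauregulartransforms} is satisfied. We obtain
\[
\uptau(M,\sC,\mathfrak{a}^{\bm t}) = f^!\uptau(M',\sC',\mathfrak{a}'^{\bm t}).
\]
Since $f$ is faithfully flat, it then suffices to prove finiteness over $\el$.

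\textbf{Step 2 (from $\el$ to $\F_q$).} The field $\el$ is finite separable over $K = \F_q(t_1,\ldots,t_m)$, so $\el$ is étale over $K$. Spread everything out: write $\el$ as the fraction field of a finite étale extension $A$ of a localization $D = \F_q[t_1,\ldots,t_m]_g$, and choose a finite type $A$-algebra $R''$ with $\mathfrak{a}_i''$ and $(M'',\sC'')$ whose generic fiber is our data. Then $R''$ is of finite type over $\F_q$ and $R' = R''\otimes_A \el$ is a localization of $R''$.

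Test modules commute with localization, so the $\uptau$ over $R'$ are localizations of $\uptau(M'',\sC'',\mathfrak{a}''^{\bm t})$. Since $\sC''$ is finitely generated, $(M'',\sC'')$ is gauge bounded over $\F_q$, and \autoref{le.gbfinitelymanytestmodules} gives finiteness of these test modules; localizing preserves finiteness.

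The main obstacle is checking the hypotheses of \autoref{le.gbfinitelymanytestmodules} after spreading out. The Cartier algebra $\sC'$ must descend to a finitely generated algebra $\sC''$ on $R''$ that localizes back to $\sC'$. I would spread out finitely many generators as maps $F^e_*M''\to M''$, clearing denominators by shrinking $g$, so the localization is exact. Since the existence hypotheses of the test module theory are only needed over $R'$, this localization step avoids any need for regularity of $\F_q\to\el$; that regularity fails anyway because $\el$ is not $F$-finite over $\F_q$ in the required relative sense, so $\Omega$ is not finite.
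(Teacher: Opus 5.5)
Your proposal is correct and follows essentially the paper's route. You spread the data out to a finite type $\F_q$-algebra $R''$, apply \autoref{le.gbfinitelymanytestmodules} there, and transport finiteness along the regular $F$-finite maps $R''\to R'$ (a localization) and $R'\to R'\otimes_\el\kay$ (separable base change) via \autoref{cor.tauregulartransforms}. The only differences are that you split this into two steps and spread out the generators of $\sC'$ by clearing denominators, where the paper invokes \autoref{pro.shriekfullhomset}.

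One correction to your closing aside: $\F_q\to\el$ is in fact regular and $F$-finite, since $\F_q$ is perfect, $\el$ is $F$-finite, and $\dim_\el \Omega_{\el/\F_q}=m$. The real reason it plays no role is that the data are only defined over $\el$, so $R'$ is a localization of $R''$, not a base change of some $\F_q$-algebra along $\F_q\to\el$.
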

\begin{proof}
By our assumption, there is a finite type $\el$-algebra $R'$ such that $R' \otimes_\el \kay \cong R$, a finite $R'$-module $M'$ with $M' \otimes_\el \kay \cong M$, and ideals $\mathfrak{a}_i'$ with $\mathfrak{a}_i' \otimes_\el \kay \cong \mathfrak{a}_i$. Furthermore, by spreading out, there is a finite extension $\F_q[t_1, \ldots, t_n] \to S$ with $S$ a domain and a finite type $S$-algebra $R''$ such that $R'' \otimes_{S} \Frac(S) \cong R'$; and similarly, a finite $R''$-module $M''$ and ideals $\mathfrak{a}_i''$. Note that $R''$ is then a finite type $\F_q$-algebra. Due to \autoref{pro.shriekfullhomset} (we can always localize so that $\omega_{R''/R'}$ is free), we find elements in $\sC_{M''}$ whose base changes correspond to generators of $\sC$. We denote the algebra generated by those elements by $\sC''$.

By \autoref{le.discretenessfinitefield}, the set 
\[\left\{ \uptau(M'', \sC'', {\fraa''}^{\bm{t}}) \mid  \bm{t} \in [0,T]^n \right\} 
\] is finite. The map $R'' \to R'' \otimes_S \Frac(S)$ is a localization and thus regular. Likewise, since $\el \to \kay$ is separable (and thus regular \cite[\href{https://stacks.math.columbia.edu/tag/07EQ}{Lemma 07EQ}]{stacks-project}), its base change $R' \to R' \otimes_\el \kay$ is also regular by \autoref{co.ffiniteregularbasechange}. Thus \autoref{cor.tauregulartransforms} yields the equality
\[ 
\uptau(M, \sC, \fraa^{\bm{t}}) = \uptau(M'', \sC'', \fraa''^{\bm{t}}) \otimes_{S} \Frac(S) \otimes_\el \kay.
\]
\end{proof}

\subsection{Constancy Regions}

We will essentially follow the general strategy of P\'erez' proof. However, things are a bit more technical since we are not working over regular rings and thus do not have the flatness of Frobenius at our disposal. Consequently, our analog of \cite[Lemma 4.4]{perezconstancyregions} is more involved. We start with a lemma  that is well-known to experts. It is a generalization of the fact that if $t$ is an $F$-jumping number, then so is $pt$.

\begin{lemma}[{\cf \cite[Lemma 4.3]{perezconstancyregions}, \cite[Proposition 3.1]{StablerVfiltrations}, \cite[Lemma 2.1]{StablerAssGraded}}]
\label{le.CartierDividesbyP}
Let $R$ be essentially of finite type over an $F$-finite field. Let $\sC$ be a Cartier algebra that is generated (as an algebra) in degree $e_0$. Let $M$ be a $\sC$-module and $\mathfrak{a}_1, \ldots, \mathfrak{a}_n \subset R$ ideals.
Then \[\uptau(M, \mathfrak{a}_1^{t_1/p^{e_0}}, \cdots, \mathfrak{a}_n^{t_n/p^{e_0}}) = \sC_{e_0} \uptau(M, \mathfrak{a}_1^{t_1}, \cdots, \mathfrak{a}_n^{t_n})\] for any $t \in \mathbb{R}^n_{\geq 0}$.
\end{lemma}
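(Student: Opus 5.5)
The plan is to reduce both sides to the explicit description of test modules via test elements (\autoref{def.testmodulesummary}(e)) for the mixed Cartier algebra $\sC^{\mathfrak{a}^{\bm t}}$, whose degree-$e$ piece is $\sC_e \cdot \mathfrak{a}_1^{\lceil t_1(p^e-1)\rceil}\cdots \mathfrak{a}_n^{\lceil t_n(p^e-1)\rceil}$. Since $\sC$ is generated in degree $e_0$, we may replace $\sC$ by its $e_0$-Veronese subalgebra; the test module does not change. Here I use that $\uptau$ is insensitive to passing to a Veronese subalgebra, which follows from the formula $\uptau=\sum_i\sum_{e\ge e_0}\sC_e c_i\underline{H^0_{\p_i}(M)}$. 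It is also harmless to replace ceilings $\lceil t(p^e-1)\rceil$ by $\lceil tp^e\rceil$, as is standard, since the difference is absorbed into the choice of test element. We then have $\sC_{me_0}=\sC_{e_0}^m$, i.e.\ $\sC_{me_0}=\sC_{e_0}\cdot\sC_{(m-1)e_0}$.

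The key step is a computation of degree-$e$ pieces. For $\bm t/p^{e_0}$, the degree-$(m+1)e_0$ piece is
\[
\sC_{e_0}\sC_{me_0}\,\mathfrak{a}_1^{\lceil t_1 p^{me_0}\rceil}\cdots\mathfrak{a}_n^{\lceil t_np^{me_0}\rceil}.
\]
This is exactly $\sC_{e_0}$ applied to the degree-$me_0$ piece of $\sC^{\mathfrak{a}^{\bm t}}$. Now choose a common sequence of test elements $c_i$ for the pairs $(M,\sC^{\mathfrak{a}^{\bm t}})$ and $(M,\sC^{\mathfrak{a}^{\bm t/p^{e_0}}})$. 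Both pairs have the same associated primes, because the ideals $\mathfrak{a}_j$ only shrink the algebra, and on the locus where all $\mathfrak{a}_j$ are units the two algebras agree. We then get
\[
\uptau(M,\mathfrak{a}^{\bm t/p^{e_0}})=\sum_i\sum_{m\ge 1}\sC^{\mathfrak{a}^{\bm t/p^{e_0}}}_{me_0}c_i\underline{H^0_{\p_i}(M)}=\sC_{e_0}\sum_i\sum_{m\ge 0}\sC^{\mathfrak{a}^{\bm t}}_{me_0}c_i\underline{H^0_{\p_i}(M)}=\sC_{e_0}\uptau(M,\mathfrak{a}^{\bm t}),
\]
where the freedom in the starting index $e_0$ in \autoref{def.testmodulesummary}(e) handles the shift by one.

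I expect the main obstacle to be justifying the choice of test elements and the non-$F$-pure parts $\underline{H^0_{\p}(M)}$ simultaneously for both algebras. The stable images $\underline{H^0_\p(M)}$ with respect to $\sC^{\mathfrak{a}^{\bm t}}$ may differ from those with respect to $\sC$. To handle this, I would first try to pick $c_i$ inside $\mathfrak{a}_1\cdots\mathfrak{a}_n$, if this is not contained in $\p_i$. After inverting such a $c_i$ the mixed algebras coincide with $\sC$, so a sequence of test elements for $(M,\sC)$ works for both. If some $\mathfrak{a}_j\subset\p_i$, I would argue that the corresponding component is nilpotent for both pairs and contributes zero on both sides. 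Finally, I would check that the ceiling/exponent bookkeeping, $\lceil t p^{me_0}\rceil$ versus $\lceil (t/p^{e_0})p^{(m+1)e_0}\rceil$, matches exactly, which it does.
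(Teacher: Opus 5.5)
Your proposal is the paper's argument: expand $\uptau(M,\mathfrak{a}^{\bm t})$ by the test-element formula with exponents $\lceil t_ip^e\rceil$, apply $\sC_{e_0}$ using $\sC_{e_0}\sC_e=\sC_{e+e_0}$, and reindex using $\lceil (t_i/p^{e_0})p^{e+e_0}\rceil=\lceil t_ip^e\rceil$ together with the freedom in the starting index. The obstacle you flag is handled in the paper by citing \cite[Theorem 3.4]{BlickleStablerFunctorialTestModules}, whose formula uses the associated primes, test elements and $\underline{H^0_{\eta_i}(M)}$ of $(M,\sC)$ itself for every $\bm t$, so no special choice of $c_i$ is needed; if you argue it by hand instead, apply your vanishing argument for $\mathfrak{a}_j\subset\p_i$ only to those $j$ with $t_j>0$.
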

\begin{proof}
By \cite[Theorem 3.4]{BlickleStablerFunctorialTestModules}, we have \[
\uptau(M, \mathfrak{a}_1^{t_1}, \cdots, \mathfrak{a}_n^{t_n}) = \sum_{i=1}^m \sum_{e \geq e_1} \mathcal{C}_e \mathfrak{a}_1^{\lceil t_1 p^e\rceil} \cdots \mathfrak{a}_n^{\lceil t_n p^e\rceil} c_i \underline{H^0_{\eta_i}(M)}
\] for any $e_1 \geq 0$, where the $\eta_1, \cdots, \eta_m$ are the associated primes of $M$ and the $c_i$ form a sequence of test elements in the sense of \cite[Definition 3.1]{BlickleStablerFunctorialTestModules}. We thus have \begin{align*} \sC_{e_0} (\uptau(M,\mathfrak{a}_1^{t_1}, \cdots, \mathfrak{a}_n^{t_n})) &= \sum_{i=1}^m \sum_{e \geq e_1} \sC_{e+e_0} \mathfrak{a}_1^{\lceil t_1 p^e\rceil} \cdots \mathfrak{a}_n^{\lceil t_n p^e\rceil} c_i \underline{H^0_{\eta_i}(M)}  \\&= \sum_{i=1}^m \sum_{e \geq e_0 + e_1} \sC_{e} \mathfrak{a}_1^{\lceil t_1 p^{e-e_0}\rceil} \cdots \mathfrak{a}_n^{\lceil t_n p^{e- e_0}\rceil} c_i \underline{H^0_{\eta_i}(M)}\\ &= \uptau(M, \mathfrak{a}_1^{t_1/p^{e_0}}, \cdots, \mathfrak{a}_n^{t_n/p^{e_0}}), \end{align*} where we use the fact that $\sC$ is generated in degree $e_0$ to obtain the second equality.
\end{proof}

\begin{remark}
Since $M$ is always assumed to be finitely generated, so is $\Hom_R(F_\ast^e M, M)$. In particular, the assumption that $\sC$ is generated in degree $e_0$ implies that we may assume $\sC$ is finitely generated as an algebra.
\end{remark}

\begin{definition}
Let $(M, \sC)$ be a Cartier module, $\mathfrak{a}_1, \ldots, \mathfrak{a}_n \subset R$ be ideals, and $N \subset M$ be an $R$-submodule.
\begin{enumerate}[(a)]
\item Given $\bm{c} \in \mathbb{R}_{\geq 0}^n$, the \emph{constancy region} of $\uptau(M, {\mathfrak{a}}^{\bm{c}})$ is the set of points $\bm{c}' \in \mathbb{R}^n_{\geq 0}$ for which $\uptau(M, {\mathfrak{a}}^{\bm{c}}) = \uptau(M, \mathfrak{a}^{\bm{c}'})$. We denote the corresponding characteristic function by $\rho_{\bm{c}}$, i.e., $\rho_{\bm{c}}(\bm{c}') =1$ if $\uptau(M, {\mathfrak{a}}^{\bm{c}}) = \uptau(M, \mathfrak{a}^{\bm{c}'})$ and zero otherwise.

\item We denote by $B^N({\mathfrak{a}}) \subset \mathbb{R}^n_{\geq 0}$ the set of points $\bm{c} \in \mathbb{R}^n_{\geq 0}$ for which $\uptau(M, {\mathfrak{a}}^{\bm{c}}) \nsubseteq N$. We write $\chi_{{\mathfrak{a}}}^N$ for the corresponding characteristic function.%; i.e., $\chi_{{\mathfrak{a}}}^N(\bm{c}) = 1$ if $\uptau(M, {\mathfrak{a}}^{\bm{c}}) \nsubseteq N$ and zero otherwise. 
\end{enumerate} 
\end{definition}
If $\bm{c} \in B^N(\mathfrak{a})$, the constancy region of $\uptau(M, \fraa^{\bm{c}})$ is contained in $B^N(\mathfrak{a})$. Thus $B^N(\mathfrak{a})$ is a union of constancy regions.

\begin{lemma}
\label{le.constancyregiondescr}
Working in \autoref{set.mixedtestmodules}, for each $\bm{c} \in \mathbb{R}^n_{\geq 0}$ and $T > 0$, there exist finitely many $R$-submodules $N_1, \ldots, N_d, P \subset M$ such that the constancy region of $\uptau(M, \mathfrak{a}^{\bm{c}})$ restricted to $[0,T]^n$ is $\bigcap_{i=1}^d B^{N_i}(\mathfrak{a}) \setminus B^P(\mathfrak{a})$.
\end{lemma}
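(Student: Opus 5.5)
The plan is to use the finiteness from \autoref{theo.finitenessconstacyregions} together with the antitonicity of mixed test modules. Set $\uptau_{\bm{c}} \coloneqq \uptau(M, \mathfrak{a}^{\bm{c}})$. By \autoref{theo.finitenessconstacyregions}, the set $\{\uptau(M, \mathfrak{a}^{\bm{t}}) \mid \bm{t} \in [0,T]^n\}$ is finite, say $\{\uptau_{\bm{c}}, Q_1, \ldots, Q_m\}$ with the $Q_j$ distinct from $\uptau_{\bm{c}}$. For $\bm{c}' \in [0,T]^n$ we have $\uptau_{\bm{c}'} = \uptau_{\bm{c}}$ exactly when the following two conditions hold:
\begin{enumerate}[(i)]
\item $\uptau_{\bm{c}'} \subseteq \uptau_{\bm{c}}$;
\item $\uptau_{\bm{c}'} \not\subsetneq \uptau_{\bm{c}}$, i.e.\ $\uptau_{\bm{c}'}$ is not one of the $Q_j$ properly contained in $\uptau_{\bm{c}}$.
\end{enumerate}
The task is to phrase both conditions through the sets $B^N(\mathfrak{a})$.

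Condition (ii) should come from the sets $B^{N}$ themselves. For each $Q_j \subsetneq \uptau_{\bm{c}}$ put $N_j \coloneqq Q_j$. Then $\bm{c}' \in B^{N_j}(\mathfrak{a})$ means $\uptau_{\bm{c}'} \not\subseteq Q_j$. If $\uptau_{\bm{c}'} = \uptau_{\bm{c}}$, this holds because $\uptau_{\bm{c}} \not\subseteq Q_j$. Conversely, if $\uptau_{\bm{c}'} \subseteq \uptau_{\bm{c}}$ and $\bm{c}'$ lies in all $B^{N_j}$, then $\uptau_{\bm{c}'}$ is neither a $Q_j$ properly inside $\uptau_{\bm{c}}$ nor contained in one. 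Since $\uptau_{\bm{c}'}$ is in the finite list, it must equal $\uptau_{\bm{c}}$. If no proper $Q_j$ exists, take $d=0$, or $N_1 = M$ replaced suitably.

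Condition (i) is where the single excluded set $B^P$ enters. The difficulty is that $\uptau_{\bm{c}'} \subseteq \uptau_{\bm{c}}$ is the complement of $B^{\uptau_{\bm{c}}}(\mathfrak{a})$, so the natural choice is $P \coloneqq \uptau_{\bm{c}}$. Then $\bm{c}' \notin B^P(\mathfrak{a})$ is exactly condition (i), and the constancy region in $[0,T]^n$ equals
\[
\bigcap_{j} B^{N_j}(\mathfrak{a}) \setminus B^{\uptau_{\bm{c}}}(\mathfrak{a}),
\]
where $j$ runs over those indices with $Q_j \subsetneq \uptau_{\bm{c}}$. As a sanity check, any $\bm{c}'$ in this set has $\uptau_{\bm{c}'} \subseteq \uptau_{\bm{c}}$ and is not contained in any proper member of the list. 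Hence it equals $\uptau_{\bm{c}}$, and the reverse inclusion was noted above.

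The main point to verify is that \autoref{theo.finitenessconstacyregions} really gives finiteness over all of $[0,T]^n$ under \autoref{set.mixedtestmodules}; this is where the pullback machinery (\autoref{cor.tauregulartransforms}) has been used. Everything else is set-theoretic. No monotonicity of $\bm{t} \mapsto \uptau_{\bm{t}}$ is needed, although it holds and would let one take the $N_j$ to be only the maximal proper members.
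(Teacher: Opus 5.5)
Your proposal is correct and is essentially the paper's proof: the paper uses \autoref{theo.finitenessconstacyregions} to obtain the finite set of test modules over $[0,T]^n$, takes the $N_i$ to be the members strictly contained in $\uptau(M,\mathfrak{a}^{\bm{c}})$, and sets $P=\uptau(M,\mathfrak{a}^{\bm{c}})$, while your set-theoretic check supplies the verification the paper leaves implicit. The one thing to drop is the aside ``$N_1 = M$ replaced suitably'', since $B^M(\mathfrak{a})=\emptyset$ and that choice would make the region empty. When no member is properly contained in $\uptau(M,\mathfrak{a}^{\bm{c}})$, the right choice is $d=0$ with the empty intersection read as all of $[0,T]^n$.
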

\begin{proof}
By \autoref{theo.finitenessconstacyregions}, the set $\sA = \{ \uptau(M, \mathfrak{a}^{\bm{c}'}) \, \vert \, \bm{c}' \in [0,T]^n\}$ is finite. Let $N_1, \ldots, N_d$ be the elements of $\sA$ that are strictly contained in $\uptau(M, \mathfrak{a}^{\bm{c}})$ and let $P = \uptau(M, \mathfrak{a}^{\bm{c}})$.
\end{proof}

\begin{remark}
In \autoref{le.constancyregiondescr}, similarly to \cite[Theorem 3.20]{perezconstancyregions}, we do not need to take the intersection of the constancy region with $[0,T]^n$ if the $\fraa_i$ are contained in a single maximal ideal $\fram$. To wit, $\uptau(M, \fraa^{\bm{c}}) \subset \uptau(M, \m^{\bm{c}}) \subset \tau(M, \m^{\sum_i c_i})$. By the Krull intersection theorem, there is $l \gg 0$ such that $\uptau(M, \m^{\sum_i c_i}) \nsubseteq \m^l \uptau(M)$. If $r$ is the number of generators of $\fram$ then, via Skoda, for any $t \geq rl$, we have $\uptau(M, \fram^t) = \fram^l \uptau(M, \fram^{t-l}) \subset \fram^l \uptau(M)$. Hence, if $\bm{c}'$ is in the constancy region of $\uptau(M, \fraa^{\bm{c}})$, then $\sum_i c_i' < r l$; so that the constancy region is bounded.
\end{remark}

\begin{definition}[{see \cite[Definition 4.1]{perezconstancyregions} and \cite[Definition 2.1]{MonskyTeixeiraPFractals1}}]
\label{def.pfractal}
Let $\sF$ be the algebra of functions $\Phi\:\mathbb{R}^n_{\geq 0} \to \mathbb{Q}$. For each $q = p^e$ and $b \in \mathbb{Z}^n$ with $0 \leq b_i \leq q$, we define the operator
$T_{q\vert b}\: \sF \to \sF$ by \[\Phi(t_1, \ldots, t_n) \longmapsto T_{q\vert b} \Phi(t_1, \ldots, t_n) \coloneqq \Phi((t_1 + b_1)/q, \ldots, (t_n + b_n)/q).
\]
We say that $\Phi\colon [0,T]^n \to \mathbb{Q}$ is a \emph{$p$-fractal} if the span of all the $T_{q\vert b} \Phi_0$ is a finite-dimensional $\mathbb{Q}$-vector space, where $\Phi_0$ is the extension by zero of $\Phi$ to $\mathbb{R}^n_{\geq 0}$. We say that $\Phi \in \sF$ is a \emph{$p$-fractal} if the extension by zero of its restriction to each $[0,T]^n$ is a $p$-fractal.
\end{definition}

We now come to our replacement of \cite[Lemma 4.4]{perezconstancyregions}, which, apart from the reduction to the finite field case via \autoref{theo.finitenessconstacyregions}, is the heart of the argument.

\begin{lemma}
\label{le.constancykeylemma}
Working in \autoref{set.mixedtestmodules}, assume that $\sC$ is (finitely) generated in a single degree $e$. Let $\bm{l} = (l_1, \ldots, l_n) \in \mathbb{Z}^n$ be such that $l_i$ is the minimal number of generators of $\mathfrak{a}_i$. Given $T > 0$, there is $c_0$ and a finite subset $\sA$ of $R$-submodules of $M$ such that for all $a \geq 0$, all $\bm{b} \leq p^a$, and all $R$-submodules $N$ of $M$, we have 
\[
T_{p^a\vert \bm{b}} \chi_\mathfrak{a}^N\big\vert_{[0,T]^n} \in \big\{T_{p^{c}\vert \bm{d} } \chi_\mathfrak{a}^{N'}\big\vert_{[0,T]^n} \bigm| c < c_0, \bm{d} \leq p^{c_0} \bm{l}, N' \in \mathcal{A}  \big\}.
\] In other words, $\chi_\mathfrak{a}^N$ is a $p$-fractal.
\end{lemma}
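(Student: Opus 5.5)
Following Pérez's strategy for \cite[Lemma 4.4]{perezconstancyregions}, I would prove this in two moves: a dictionary between $T_{p^a|\bm b}\chi^N_\mathfrak{a}$ and characteristic functions of the form $\chi^{N'}_\mathfrak{a}$, obtained from \autoref{le.CartierDividesbyP}; and a finiteness statement for the submodules $N'$ that appear, obtained from \autoref{theo.finitenessconstacyregions} together with gauge boundedness. Since $\sC$ is generated in degree $e$, it suffices to treat $a$ a multiple of $e$, say $a=ke$. Indeed, for general $a$ one writes $T_{p^a|\bm b}=T_{p^{a'}|\bm b'}\circ T_{p^{r}|\bm b''}$ with $r<e$, and absorbs the bounded remainder into the allowed range $c<c_0$.

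\emph{Step 1 (unwinding $T$).} By \autoref{le.CartierDividesbyP} applied $k$ times, and by Skoda-type absorption of the integer part of the exponent, for $\bm t\in[0,T]^n$ we have
\[
\uptau\big(M,\mathfrak{a}^{(\bm t+\bm b)/p^{a}}\big)=\sC_{a}\,\uptau(M,\mathfrak{a}^{\bm t+\bm b}).
\]
Now write $\mathfrak{a}^{\bm b}=\sum$ of products of monomials in the generators; there are finitely many such generators, namely at most $\bm l$ per ideal. Using the mixed-test-module formula from \cite[Theorem 3.4]{BlickleStablerFunctorialTestModules}, as in the proof of \autoref{le.CartierDividesbyP}, we get
\[
\uptau(M,\mathfrak{a}^{\bm t+\bm b})=\mathfrak{a}^{\bm b}\uptau(M,\mathfrak{a}^{\bm t})
\]
for $\bm b$ large relative to $\bm l$ (Skoda). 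For $\bm b$ below $p^{c_0}\bm l$ we keep $\bm b$ as a shift. Hence
\[
\uptau(M,\mathfrak{a}^{(\bm t+\bm b)/p^a})\not\subseteq N
\iff
\uptau(M,\mathfrak{a}^{\bm t+\bm d})\not\subseteq N',
\qquad
N'\coloneqq\{m\in M : \sC_a\,\mathfrak{a}^{\bm b-\bm d}m\subseteq N\}.
\]
This expresses $T_{p^a|\bm b}\chi^N_\mathfrak{a}$ as $T_{1|\bm d}\chi^{N'}_\mathfrak{a}$ (or as $T_{p^c|\bm d}\chi^{N'}_{\mathfrak{a}}$ after splitting off a small power).

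\emph{Step 2 (finiteness of the $N'$).} The key observation is that $\chi^{N'}_\mathfrak{a}$ restricted to $[0,T+\text{const}]^n$ depends only on which elements of the finite set $\sA_0=\{\uptau(M,\mathfrak{a}^{\bm t})\}$ are contained in $N'$; this set is finite by \autoref{theo.finitenessconstacyregions}. So I replace $N'$ by the largest member of the finite lattice generated by sums of elements of $\sA_0$ contained in $N'$. That is, I take $\sA$ to be the set of all sums of subsets of $\sA_0$, together with $M$. This yields $c_0$ and $\sA$ independent of $N$, $a$ and $\bm b$.

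The main obstacle is Step 1. Without flatness of Frobenius we cannot write $\sC_a(I\cdot M)$ as $I^{[1/p^a]}$ of something, so the preimage description of $N'$ has to be phrased through the Cartier action itself. The choice of $\bm d$ must also be made carefully so that the Skoda reduction $\uptau(M,\mathfrak{a}^{\bm t+\bm b})=\mathfrak{a}^{\bm b-\bm d}\uptau(M,\mathfrak{a}^{\bm t+\bm d})$ holds uniformly for $\bm d\ge \bm l$. My first attempt would be to verify this Skoda statement directly from the sum formula of \cite[Theorem 3.4]{BlickleStablerFunctorialTestModules}. That verification uses $\mathfrak{a}_i^{\lceil (s+l_i)p^e\rceil}=\mathfrak{a}_i^{l_i}\cdot(\text{stuff})$ for $p^e\ge l_i$, combined with $\sC_e\,\mathfrak{a}^{[p^e]}=\mathfrak{a}\,\sC_e$.
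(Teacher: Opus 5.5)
Your argument is correct, but it takes a different route from the paper's. The paper shrinks $a$ using gauge boundedness. By \cite[Corollary 4.10]{BlickleTestIdealsViaAlgebras}, after $a_0$ iterations the module $\sC_+^{a}(\uptau(M,\mathfrak{a}^{\bm{t}}))$ is generated in gauge at most $K/(p-1)+1$. So non-containment in $N$ can be tested on $R\cdot V$ for some $V$ inside the finite-dimensional piece $M_{\le k}$. This trades $(a,N)$ for $(c,R\cdot V)$ with $c<c_0=a_0e$. Skoda then replaces $R\cdot V$ by $(R\cdot V:_M\mathfrak{a}_i)$, and \autoref{theo.finitenessconstacyregions} bounds the number of distinct functions $\chi^N_{\mathfrak{a}}$.

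You instead remove the whole multiple-of-$e$ part of $a$ at once, using $\sC_aX\not\subseteq N\iff X\not\subseteq N'$ with $N'=\{m\in M : \sC_a\mathfrak{a}^{\bm{b}-\bm{d}}m\subseteq N\}$. This needs no gauge bounds at all and yields $c_0=e$, with $c$ the residue of $a$ modulo $e$. The only finiteness input is \autoref{theo.finitenessconstacyregions} on a slightly enlarged box. Your replacement of $N'$ by $\sum\{A\in\sA_0 : A\subseteq N'\}$ is valid.

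Your approach is essentially the ``direct argument'' in the Remark after the lemma. The difference is that you quantify over the whole graded piece $\sC_a$, which satisfies $\sC_a\cdot R\subseteq\sC_a$, rather than over a single $\phi^a$. As a result $N'$ is an honest $R$-submodule, so you need neither principality of $\sC$ nor a relaxation of $B^N$ to subsets. You should state this closure explicitly: Step 2 needs $N'$ to be closed under sums so that the replacement lies inside $N'$.

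Two points in the write-up need repair.

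\emph{The Skoda display.} The equality $\uptau(M,\mathfrak{a}^{\bm{t}+\bm{b}})=\mathfrak{a}^{\bm{b}}\uptau(M,\mathfrak{a}^{\bm{t}})$ is false as soon as some $l_i\ge 2$. For $\mathfrak{m}=(x,y)\subset\kay[x,y]$ and $0\le t<1$, the left side is $\mathfrak{m}^{b-1}$ and the right side is $\mathfrak{m}^{b}$. Skoda only lets you peel down to $\bm{t}+\bm{d}$ with $d_i=\min(b_i,l_i)$. That is what your later display uses, and it gives $\bm{d}\le\bm{l}$ automatically. Your first display is just \autoref{le.CartierDividesbyP} iterated; no Skoda is involved there.

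\emph{The remainder case $a=ke+r$ with $0<r<e$.} Since $T_{q_1\vert b_1}\circ T_{q_2\vert b_2}=T_{q_1q_2\vert b_1+q_1b_2}$ and the inner operator is the one that hits $\chi^N$, the factorization must be $T_{p^a\vert\bm{b}}=T_{p^r\vert\bm{b}_1}\circ T_{p^{ke}\vert\bm{b}_2}$ with $\bm{b}=\bm{b}_1+p^r\bm{b}_2$. In your order the $p^r$-operator acts on $\chi^N$ first, and Step 1 does not apply to the result. Alternatively, apply \autoref{le.CartierDividesbyP} to the exponent $(\bm{t}+\bm{b})/p^r$. Then use Skoda, which is valid uniformly in $\bm{t}\ge 0$ as long as $d_i\ge(l_i-1)p^r$, to reduce $\bm{b}$ to some $\bm{d}$ with $d_i<l_ip^r$. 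This gives $T_{p^a\vert\bm{b}}\chi^N_{\mathfrak{a}}=T_{p^r\vert\bm{d}}\chi^{N'}_{\mathfrak{a}}$ with $N'=\{m : \sC_{ke}\mathfrak{a}^{(\bm{b}-\bm{d})/p^r}m\subseteq N\}$. This lands in the required set with $c=r<c_0=e$ and $\bm{d}\le p^{c_0}\bm{l}$.
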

\begin{proof}
If $\uptau(M, \mathfrak{a}^{\bm{t}})$ is generated in gauge $\leq d$, then for any $a_0$ such that $p^{ea_0} > d$, the module $\sC_+^{a_0}(\uptau(M, \mathfrak{a}^{\bm{t}}))$ is generated in gauge $\leq \frac{K}{p-1} +1$, where $K$ is some constant that only depends on $(M, \sC)$, $\mathfrak{a}$, and $T$ (\cite[Corollary 4.10]{BlickleTestIdealsViaAlgebras}). Let $k \coloneqq  \lfloor \frac{K}{p-1} +1\rfloor$. Since $
\dim_{\kay}M_{\leq k} <0$, for any $a \geq a_0$ and any $R$-module $N$, we have $\sC_+^a(\uptau(M, \mathfrak{a}^{\bm{t}})) \nsubseteq N$ if and only if $R\cdot V \nsubseteq N$, where $V$ is some finite-dimensional $\kay$-subspace of $M_{\leq k}$. Set $c_0 = a_0 e$. By the above and \autoref{le.CartierDividesbyP}, for any $a \geq c_0$, we have  
\[
T_{p^a\vert\bm{b}} \chi^N_{\mathfrak{a}}\big\vert_{[0,T]^n} = T_{p^c\vert\bm{b}} \chi^{R \cdot V}_\mathfrak{a}\big\vert_{[0,T]^n}
\] for some $c < c_0$. If the $i$th component $b_i$ of $\bm{b}$ is $\geq l_i p^{c_0}$, Skoda for mixed test modules (\cite[Proposition 4.7]{BlickleStablerFunctorialTestModules}) yields
\[
\uptau(M, \mathfrak{a}^{(\bm{t} + \bm{b})/p^c}) = \mathfrak{a}_i \uptau(M, \mathfrak{a}^{(\bm{t} + \bm{b} - \bm{e}_ip^c)/p^c}), 
\] where $\bm{e}_i$ denotes the $i$th standard basis vector. In particular,
\[
T_{p^c\vert\bm{b}} \chi_\mathfrak{a}^{R \cdot V}\big\vert_{[0,T]^n} = T_{p^c\vert\bm{b} - \bm{e}_ip^c} \chi^{N'}_\mathfrak{a}\big\vert_{[0,T]^n}, 
\]
where $N' \coloneqq (R\cdot V :_M \mathfrak{a}_i)$.

By definition, $\chi_\mathfrak{a}^N$ is the characteristic function of the set $B_\mathfrak{a}^N$, which is a union of constancy regions. By \autoref{theo.finitenessconstacyregions}, there are only finitely many of those. Hence, there are only finitely many functions $\chi_\mathfrak{a}^N$ for varying $N$, and we deduce that we only need to consider $R$-submodules $N'$ contained in a finite set $\sA$.
\end{proof}

\begin{remark}
If we restrict our attention to principal Cartier algebras $\sC = \langle \phi \rangle$, then the assertion of \autoref{le.constancykeylemma} also follows from a more direct argument, avoiding gauge-boundedness. Indeed, one may replace $M$ with $\sC_+^a M$ for $a \gg 0$ and thus assume that $(M, \sC)$ is $F$-pure. Since $\phi\: F_\ast^e M \to M$ is surjective, we have $\phi^a (\uptau(M, \mathfrak{a}^{\bm{t}})) \nsubseteq N$ if and only if $\uptau(M, \mathfrak{a}^{\bm{t}}) \nsubseteq (\phi^{a})^{-1}(N) \subset F^{ea}_\ast M$. In this situation, if one relaxes the definition of the $B_\mathfrak{a}^N$ to only require that the $N$ be subsets (or additive subgroups) of $M$, then the argument of \autoref{le.constancykeylemma} still works.
\end{remark}

\begin{theorem}
\label{theo.pfractal}
Working in \autoref{set.mixedtestmodules}, assume that $\sC$ is (finitely) generated in a single degree. Then, there is a $p$-fractal function $\varphi\: \mathbb{R}_{\geq 0}^n \to \mathbb{N}$ for which
\[\uptau(M, \sC, \mathfrak{a}^{\bm{t}}) = \uptau(M, \sC, \mathfrak{a}^{\bm{t}'}) \Longleftrightarrow \varphi(\bm{t}) = \varphi(\bm{t}').
\]
In particular, the constancy regions are of the form $\varphi^{-1}(i)$ for some $i \in \mathbb{N}$.
\end{theorem}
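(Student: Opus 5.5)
We will follow Pérez's strategy, with \autoref{theo.finitenessconstacyregions} and \autoref{le.constancykeylemma} as the two inputs. First, fix $T>0$; it suffices to build $\varphi$ on each cube $[0,T]^n$ compatibly, and then glue. By \autoref{theo.finitenessconstacyregions}, the set $\sA_T=\{\uptau(M,\sC,\mathfrak{a}^{\bm t}) \mid \bm t\in[0,T]^n\}$ is finite. Moreover, increasing $T$ only enlarges this set. Hence the union over all $T\in\bN$ is countable, and we may enumerate the test modules $\uptau_1,\uptau_2,\ldots$ in order of first appearance as $T$ grows.

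Next, define $\varphi(\bm t)\coloneqq i$ where $\uptau(M,\sC,\mathfrak{a}^{\bm t})=\uptau_i$. By construction, $\varphi(\bm t)=\varphi(\bm t')$ holds exactly when the test modules agree, so the constancy regions are the fibres $\varphi^{-1}(i)$. It remains to show that $\varphi$ is a $p$-fractal, i.e.\ that the restriction $\varphi|_{[0,T]^n}$, extended by zero, is a $p$-fractal for each $T$.

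On $[0,T]^n$ we can write $\varphi = \sum_{i} i\,\rho_i$, a finite sum over those $i$ with $\uptau_i\in\sA_T$, where $\rho_i$ is the characteristic function of the constancy region of $\uptau_i$ restricted to $[0,T]^n$. Since $p$-fractals form a $\mathbb{Q}$-vector space (the span condition is preserved under finite linear combinations, because each $T_{q|b}$ is linear), it suffices to show that each $\rho_i$ is a $p$-fractal. By \autoref{le.constancyregiondescr}, on $[0,T]^n$ we have
\[
\rho_i=\prod_{j=1}^d \chi^{N_j}_{\mathfrak{a}}\cdot\bigl(1-\chi^{P}_{\mathfrak{a}}\bigr).
\]

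To conclude, we use that $p$-fractals are closed under products: the operators $T_{q|b}$ are algebra homomorphisms on $\sF$, so $T_{q|b}(\Phi\Psi)=T_{q|b}\Phi\cdot T_{q|b}\Psi$ lies in the product of two finite-dimensional spans, which is again finite dimensional. The constant function $1$ restricted to a cube is a $p$-fractal, since its translates are indicator functions of finitely many boxes determined by $T$; this is exactly the fact Pérez uses. Together with \autoref{le.constancykeylemma}, which says each $\chi^N_{\mathfrak{a}}$ is a $p$-fractal, this finishes the argument.

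The main point requiring care is the bookkeeping with the restrictions to $[0,T]^n$. Extending by zero and then rescaling by $T_{q|b}$ can move points into regions outside the cube, so we must check that the identity for $\rho_i$ holds as an identity of extended-by-zero functions. We plan to handle this by multiplying throughout by the indicator of $[0,T]^n$, which is itself a $p$-fractal, as in \cite{perezconstancyregions}.
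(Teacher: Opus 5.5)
Your route is the paper's. Its proof likewise enumerates the countably many constancy regions, sets $\varphi=\sum_i i\,\rho_{\bm c_i}$, writes each $\rho_{\bm c}$ via \autoref{le.constancyregiondescr} in terms of the $\chi^N_{\mathfrak a}$, and invokes \autoref{le.constancykeylemma}. You also make explicit two things the paper leaves implicit: that $p$-fractals are closed under sums and products. Your countability argument, through the increasing finite sets $\sA_T$, makes the paper's appeal to Skoda unnecessary.

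The step that fails as written is the fix in your last paragraph. With extension by zero one gets $T_{q\vert\bm b}\mathbf{1}_{[0,T]^n}=\mathbf{1}_{\prod_i[0,\,qT-b_i]}$. For instance, the functions $T_{q\vert\bm 0}\mathbf{1}_{[0,T]^n}=\mathbf{1}_{[0,qT]^n}$, $q=p^e$, are linearly independent, so $\mathbf{1}_{[0,T]^n}$ is \emph{not} a $p$-fractal in that sense. The same computation shows that, read via extension by zero, neither your $\varphi$ (which is $\geq 1$ on the cube) nor a $\chi^N_{\mathfrak a}$ that is identically $1$ would be a $p$-fractal. So \autoref{le.constancykeylemma} and the theorem must be understood as ``translate, then restrict to $[0,T]^n$''. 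That is how the lemma is actually stated and proved.

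In that sense the bookkeeping is painless once you take $T\geq 2$, which suffices because restricting to a smaller cube preserves finite-dimensionality. For $q\geq p$ and $0\leq b_i\leq q$ one has $(\bm t+\bm b)/q\in[0,T]^n$ for every $\bm t\in[0,T]^n$. Hence each $T_{q\vert\bm b}$ is a $\mathbb{Q}$-algebra endomorphism of the functions on the cube that fixes constants. The identity $\rho_i=\prod_j\chi^{N_j}_{\mathfrak a}\cdot\bigl(1-\chi^P_{\mathfrak a}\bigr)$ on $[0,T]^n$ is then transported verbatim, the constant $1$ is trivially a $p$-fractal, and no indicator function is needed. The finitely many operators with $q=1$ add at most $2^n$ functions to the span and can be ignored.
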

\begin{proof}
By \autoref{le.constancyregiondescr}, we have $\rho_{\bm{c}} = \chi_\mathfrak{a}^{N_1} \cdots \chi_\mathfrak{a}^{N_d} - \chi_\mathfrak{a}^P$ for some $R$-submodules $N_i, P \subset M$.
By \autoref{theo.finitenessconstacyregions} and Skoda, the set of constancy regions is countable. Fixing an enumeration, we let $\bm{c}_i$ be a point in the $i$th constancy region. Then we can take $\varphi \coloneqq \sum_{i \in \mathbb{N}} i \cdot \rho_{\bm{c}_i}$.
\end{proof}

\subsection{Further questions} We discuss some related questions of general interest.

\begin{question}
Is \autoref{theo.pfractal} true for $\kay$-algebras with $\kay$ an arbitrary $F$-finite field?
\end{question}

If we are in a situation where we reduce $(X, \Delta, \mathfrak{a}_1, \ldots, \mathfrak{a}_n)$, where $K_X + \Delta$ is $\mathbb{Q}$-Cartier, from characteristic zero to characteristic $p$, then it follows from \cite[Theorem 6.8]{HaraYoshidaGeneralizationOfTightClosure} (see also \cite[Proposition 2.11]{BhattSchwedeTakagiWeakOrdinarity}) that the constancy regions of the test ideals converge to the constancy regions of the multiplier ideals as $p \rightarrow \infinity$. Indeed, by \cite{HaraYoshidaGeneralizationOfTightClosure}, we have an inclusion $\uptau(X_p, \Delta_p, \mathfrak{a}_p^{\bm{t}}) \subset \mathcal{J}(X, \Delta, \mathfrak{a}^{\bm{t}})_p$ for all $p$ in a dense open (and $\bm{t}$ arbitrary) and equality for any \emph{fixed} $\bm{t}$ on a dense open. Thus, $\bm{t}$ is a boundary point of the constancy region for $\mathcal{J}$ if and only if it is a boundary point of the constancy regions for $\uptau$ for almost all $p$. A more interesting question to ask is whether the constancy regions of $\uptau$ converge to the constancy regions of $\mathcal{J}$ in a \emph{uniform} way (\cf \cite[Question after 5.5]{perezconstancyregions}). We formulate a precise question below.

Endow $\mathbb{R}^n$ with any metric $d$. Let $M$ be the collection of all non-empty, bounded, and closed subsets of $\mathbb{R}^n$. Then $M$, endowed with the so-called \emph{Hausdorff distance} $d_\mathrm{H}$, becomes a metric space (see e.g. \cite[2.5]{EdgarMeasureTopologyFractalGeometry} for background). Namely, we set 
\[ d_\mathrm{H} \coloneqq \max \Bigg\{ \sup_{a \in A} d(a, B), \sup_{b \in B} d(A, b)\Bigg\}
\] where $d(a, B) \coloneqq \inf_{b \in B} d(a,b)$. Fixing some constancy region $A$ of $\mathcal{J}$, we may fix the corresponding constancy regions $A_p$ in characteristic $p$ (choose points $\bm{t}_p$ that converge pointwise to a point $\bm{t}$ in $A$; then $A_p$ is the constancy region of $\bm{t}$).

\begin{question}
\label{que.hausdorffdistance}
Do the closures of the $A_p$ converge to the closure of $A$ via the Hausdorff-distance, where $d$ is, say, the maximum norm?
\end{question}

\begin{question}
\label{que.hausdorffdimconvergence}
Are there constancy regions $A_p$ in $\mathbb{R}^n$ with Hausdorff dimension $\neq n$? If so, do the Hausdorff dimensions of the $A_p$ converge to the Hausdorff dimension of $A$ as $p \rightarrow \infinity$?
\end{question}

%We expect \autoref{que.hausdorffdimconvergence} to be easier to answer. One might also speculate that a positive answer to the second question, together with pointwise convergence, might be helpful in answering the first one.

\begin{question}
Are the constancy regions of $\uptau$ (path-)connected?
\end{question}

This is already unknown in the case of two ideals. Let $\uptau_1$ and $\uptau_2$ be two connected components ($\subset \mathbb{R}^2$) of the same constancy region, with points $\bm{t} = (t_1, t_2) \in \uptau_1$ and $\bm{t}' =(t'_1, t'_2) \in \uptau_2$. If $t_1 < t'_1$, then necessarily $t_2 > t_2'$. Otherwise, the line between $\bm{t}$ and $\bm{t}'$ would be a non-decreasing path where the test ideals associated with the endpoints coincide. Thus, $\uptau$ would be constant along this line, so that $\bm{t}$ and $\bm{t}'$ belong to the same constancy region.

\begin{definition}
\label{def.mixedfthresholds}
Let $f_1, \ldots, f_n \in R$, and $(M, \sC)$ be an $F$-regular Cartier module. If $(t_1, \ldots, t_{n-1}) \in \mathbb{R}^{n-1}$ is such that $\uptau(M, f_1^{t_1}, \cdots, f_{n-1}^{t_{n-1}}) = M$, the minimal $t_n$ with 
$\uptau(M, \bm{f}^{\bm{t}}) \neq M$ but $\uptau(M, \bm{f}^{\bm{t} - \eps \bm{e}_n}) = M$
is the \emph{$F$-pure threshold}, or we say that $\bm{t}$ is an \emph{$F$-pure threshold}.
\end{definition}

\begin{example}[P\'erez's example]
Here we want to discuss \autoref{que.hausdorffdistance} for the example treated by P\'erez in \cite[Example 5.3]{perezconstancyregions} (where we generalize from $\mathbb{F}_3$ to an $F$-finite field $\kay$ of characteristic $\geq 3$.) We recall the set-up. Let $R = \kay[x,y]$ and $f_1 = x + y$, $f_2 = xy$. Hence, $V(f_1,f_2)$ are three lines in the plane intersecting in the origin.

A blow-up $\pi\: Y \to \mathbb{A}^2_\kay$ of the origin is a log resolution where the total transform of $V(f_1)$ is $F_1 +E$, and that of $V(f_2)$ is $F_2 + 2 E$, where $E$ is the exceptional divisor and $F_i$ is the strict transform. Since the relative canonical divisor is $E$, the mixed multiplier ideal is
\[
\mathcal{J}(\mathbb{A}^2_\kay, f^{\bm{t}}) = \pi_\ast \mathcal{O}_Y(\lceil(1- t_1 - 2t_2)E - t_1 F_1 - t_2 F_2\rceil).
\]
Thus, the constancy regions in the unit square are determined by the line $\el: t_1 + 2t_2 =2$.

P\'erez sketched for $\kay = \mathbb{F}_3$ the $p$-fractal structure of $\uptau(R, f^{\bm{t}})$. Let us briefly explain the heuristic on how to obtain the boundary of the constancy region. First of all, by \autoref{cor.tauregulartransforms}, we may restrict our attention to the case $\kay = \mathbb{F}_p$. Secondly, we describe the boundary region for points of the form $t_1 = a/p \in [0,1]$. As we will see, the $F$-pure threshold is constant in intervals $t_1 \in [a/p, (a+1)/p]$ with $a$ odd. Then we exploit the $p$-fractal structure to show that this behavior propagates to intervals of the form $[a/p^k, (a+1)/p^k]$ with $a$ odd, which are not contained in any of the intervals with smaller $p$-power denominators. Finally, we show that these intervals cover $(0,1)$, thereby obtaining a full description of the boundary region.

We would like to draw the reader's attention to a peculiar behavior for which we have no good heuristic explanation. It seems that if we fix one coordinate, say $t_1 <1$, then the $F$-threshold $t_2$, in the sense of \autoref{def.mixedfthresholds}, avoids $(a/q,a/(q-1))$ for any integer $0 \leq a \leq q
-1$. This phenomenon is well-known in the case $\uptau(f^t)$ (see \cite[Proposition 4.3 (ii)]{BlickleMustataSmithFThresholdsOfHypersurfaces}), but we do not seem to be able to reduce to this case here.

\begin{claim}
\label{claim.Perezexample1}
Fixing $t_1 = a/p<1$ with $a$ even, the $F$-pure threshold of $\uptau(f_1^{t_1}, f_2^{t_2})$ coincides with the log-canonical threshold, i.e., with $t_2 = 1 - \frac{1}{2}t_1$.
\end{claim}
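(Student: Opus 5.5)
The plan is to reduce to $\kay=\bF_p$ and then compute the test ideals directly with the Frobenius root description, which is available because $R=\kay[x,y]$ is regular. By \autoref{cor.tauregulartransforms}, applied to the regular map $\bF_p[x,y]\to \kay[x,y]$ (base change of the separable, hence regular, extension $\bF_p\to\kay$), we have $\uptau(\kay[x,y],f_1^{t_1}f_2^{t_2})=\uptau(\bF_p[x,y],f_1^{t_1}f_2^{t_2})\otimes\kay$. Since this base change is faithfully flat, the property $\uptau=R$ is unchanged, so the $F$-pure threshold is the same. Over $\bF_p[x,y]$ we use the standard formula
\[
\uptau(f_1^{t_1}f_2^{t_2})=\bigl(f_1^{\lceil t_1q\rceil}f_2^{\lceil t_2q\rceil}\bigr)^{[1/q]}, \qquad q=p^e\gg 0,
\]
and the criterion that $g^{[1/q]}=R$ if and only if $g\notin\m^{[q]}$ for $g$ homogeneous, where $\m=(x,y)$.

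Write $a=2b$ with $b$ an integer, and set $s\coloneqq 1-\tfrac{t_1}{2}=\tfrac{p-b}{p}$. We prove the two inequalities separately.

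For the first inequality, assume $t_2>s$. Then $t_1+2t_2>2$, so for $q\gg0$ the homogeneous polynomial $f_1^{\lceil t_1q\rceil}f_2^{\lceil t_2q\rceil}$ has degree $\lceil t_1q\rceil+2\lceil t_2q\rceil\geq 2q-1$. Every monomial in two variables of degree $\geq 2q-1$ has some exponent $\geq q$, so it lies in $\m^{[q]}$. Hence its $q$-th root lies in $\m$, and $\uptau\neq R$.

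For the second inequality, assume $t_2<s$ and take $q=p^e$ large with $\lceil t_2q\rceil\leq sq-1$. Put $Q=q/p$. We have $\lceil t_1q\rceil=aQ$ exactly, because $t_1=a/p$. Since $f_2$ has exponent at most $sq-1$ and $(\cdot)^{[1/q]}$ is monotone under divisibility, it suffices to show that
\[
g\coloneqq (x+y)^{aQ}(xy)^{(p-b)Q-1}\notin\m^{[q]}.
\]
The polynomial $g$ is homogeneous of degree $2bQ+2(p-b)Q-2=2q-2$, and the only monomial of that degree outside $\m^{[q]}$ is $x^{q-1}y^{q-1}$. Its coefficient in $g$ is the coefficient of $x^{bQ}y^{bQ}$ in $(x+y)^{2bQ}$, namely $\binom{2bQ}{bQ}$. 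By Lucas' theorem this is congruent to $\binom{2b}{b}$ modulo $p$, since $Q$ is a power of $p$. This is nonzero modulo $p$ because $2b=a<p$. Hence $\uptau=R$.

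Combining the two cases, the threshold is exactly $t_2=s=1-\tfrac12 t_1$. The main obstacle is the second inequality. This is where the parity of $a$ enters: it makes $t_1q/2$ an integer, so that the degree count lands exactly on $2q-2$ and the single surviving coefficient is a central binomial coefficient that Lucas' theorem controls. The only other point needing care is the monotonicity and correct choice of $e$ in the Frobenius root formula, which is routine.
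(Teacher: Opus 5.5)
Your proof is correct and is essentially the paper's argument: after reducing to $\bF_p$ via \autoref{cor.tauregulartransforms}, both compute the Frobenius root of the principal homogeneous ideal generated by $(x+y)^{ap^{e-1}}(xy)^{m}$, with $m$ chosen so that the point lies just below the line $t_1+2t_2=2$. Both then exhibit a surviving monomial with both exponents $<p^e$, using that $\binom{2b}{b}\neq 0$; the paper takes $\eps=p^{-r}$, while you take the exponent $(p-b)Q-1$ and use monotonicity. The only difference is that you prove the bound ``$F$-pure threshold $\le$ log canonical threshold'' directly by the degree count instead of citing it, and that same count (degree $\ge 2q$) also gives $\uptau\neq R$ at $t_2=s$ itself, as \autoref{def.mixedfthresholds} requires.
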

\begin{proof}
Write $a = 2 a'$. The $F$-pure threshold is at most the LCT. Hence, it suffices to show 
\[\uptau\big(f_1^{t_1} f_2^{1 - \frac{1}{2}t_1  - \eps}\big) = R
\] for all $0 < \eps \ll 1$. We may restrict to $\eps = p^{-r}$ for some $r \gg 0$, and then we can compute
\[ 
\uptau\left(f^{t_1}, f^{1 - \frac{1}{2}t_1  - \eps}\right) = \kappa^e\left(\mathbb{F}_p[x,y] \cdot (x+y)^{a p^{e-1}} (xy)^{p^e - a'p^{e-1} - p^{e-r}} \right) ,
\]
for all $e \gg 0$. Since the ideal inside $\kappa^e$ is principal and homogeneous, it suffices to show that its generator admits a summand with exponent $< p^e$ in both $x$ and $y$. Consider the term $ \binom{2a'}{a'} (x^{a'}y^{a'})^{p^e-1} $ in the expression $(x+y)^{a p^{e-1}}$, which is non-zero as $a < p$. Hence,
\[
\uptau\left(f^{t_1}, f^{1 - \frac{1}{2}t_1  - \eps}\right) = \kappa^e \left( \mathbb{F}_p[x,y] \cdot \left(\binom{2a'}{a'} (xy)^{p^e - p^{e-r}} + \text{other terms}\right)\right) = \mathbb{F}_p[x,y].
\]
\end{proof}

\begin{claim}
\label{claim.Perezexample2}
Fixing $t_1 = a/p<1$ with $a$ odd, the $F$-pure threshold of $\uptau(f_1^{t_1}, f_2^{t_2})$ is $t_2 = 1 - \frac{a+1}{2p}$.
\end{claim}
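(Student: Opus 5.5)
Throughout write $a=2a'+1$, so that the claimed threshold is $1-\frac{a+1}{2p}=1-\frac{a'+1}{p}$. I would follow the same pattern as in \autoref{claim.Perezexample1}. First, by \autoref{cor.tauregulartransforms} applied to the regular map $\bF_p[x,y]\to\kay[x,y]$, we may assume $\kay=\bF_p$. Second, I would use the standard description of mixed test ideals on the polynomial ring, with $\kappa^e$ the Cartier operator of $\bF_p[x,y]$: for exponents $t_i$ with $p$-power denominators and $e\gg0$,
\[
\uptau(f_1^{t_1}f_2^{t_2})=\kappa^e\bigl(R\cdot f_1^{t_1p^e}f_2^{t_2p^e}\bigr).
\]
For a single polynomial $g$ one then has $\kappa^e(gR)=R$ if and only if $g\notin\m^{[p^e]}=(x^{p^e},y^{p^e})$. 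Equivalently, some monomial of $g$ has both exponents $<p^e$; otherwise $\kappa^e(gR)\subset\m$.

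\emph{Step 1 (below the threshold the test ideal is $R$).} Take $t_2=1-\frac{a'+1}{p}-p^{-r}$ with $r\gg0$. The generator is
\[
g=(x+y)^{ap^{e-1}}(xy)^{N},\qquad N=p^e-(a'+1)p^{e-1}-p^{e-r}.
\]
By Frobenius, $(x+y)^{ap^{e-1}}=\sum_i\binom{a}{i}x^{ip^{e-1}}y^{(a-i)p^{e-1}}$, and $\binom{a}{i}\neq0$ in $\bF_p$ since $a<p$.

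The $i$-th term has $x$-exponent $ip^{e-1}+N$, which is $<p^e$ exactly when $i\le a'+1$. Its $y$-exponent is $(a-i)p^{e-1}+N$, which is $<p^e$ exactly when $i\ge a'$. So the terms $i=a'$ and $i=a'+1$ survive. These have distinct monomials, so there is no cancellation, and $g\notin\m^{[p^e]}$. Hence $\uptau=R$ for all such $t_2$. Since $p^{-r}$ can be taken arbitrarily small, this shows the threshold is at least $1-\frac{a'+1}{p}$.

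\emph{Step 2 (at the threshold the test ideal is proper).} Now take $t_2=1-\frac{a'+1}{p}$ exactly, so $N'=p^e-(a'+1)p^{e-1}$. The same bookkeeping gives the following. The $x$-exponent of the $i$-th term is $\ge p^e$ iff $i\ge a'+1$. The $y$-exponent is $\ge p^e$ iff $a-i\ge a'+1$, i.e.\ iff $i\le a'$. Every $i$ satisfies one of these two conditions, so every monomial of $g$ lies in $\m^{[p^e]}$ and $\uptau\subset\m\neq R$.

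By monotonicity of $\uptau$ in $t_2$, combining Steps 1 and 2 shows that $1-\frac{a'+1}{p}$ is exactly the minimal $t_2$ in \autoref{def.mixedfthresholds}.

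The main point needing care is the formula $\uptau=\kappa^e(f^{\bm{t}p^e})$ at the exact value $t_2$ in Step 2, which requires that the mixed test ideal is computed there without a perturbation by $\eps$. I would justify it via \cite[Theorem 3.4]{BlickleStablerFunctorialTestModules} with test element $1$ on the regular ring. Concretely, $\sum_{e'\ge e}\kappa^{e'}(f^{\lceil \bm{t}p^{e'}\rceil})$ stabilizes to the $e$-th term when $\bm{t}p^e$ is integral, because $\kappa^{e+1}(g^p h)=\kappa^e(g\,\kappa(h))\subset\kappa^e(g)$. Alternatively, right-continuity reduces the claim to showing $\uptau\subset\m$ for $t_2$ slightly larger than the threshold, and the same parity count gives this.
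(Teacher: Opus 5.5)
Your proof is correct, and your Step 2 is exactly the paper's computation: every term $\binom{a}{i}x^{ip^{e-1}+N'}y^{(a-i)p^{e-1}+N'}$ has some exponent $\geq p^e$, so $\uptau\subset\m$.

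The difference is in the lower bound. The paper never computes below the threshold. It gets $\uptau=R$ there from \autoref{claim.Perezexample1} at the even numerator $t_1=(a+1)/p$, whose threshold is $1-\frac{a+1}{2p}$, combined with the fact that decreasing $t_1$ can only enlarge $\uptau$. Your direct count of the surviving terms $i=a',a'+1$ avoids that dependence and makes the claim self-contained, at the cost of redoing a computation the paper recycles.

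One statement should be corrected. The equivalence $\kappa^e(gR)=R \iff g\notin\m^{[p^e]}$ is false for general $g$: for $g=(1+x)^{p^e}=1+x^{p^e}$ one has $g\notin\m^{[p^e]}$ but $\kappa^e(gR)=(1+x)$. In general $g\notin\m^{[p^e]}$ only yields $\kappa^e(gR)\not\subset\m$. To upgrade this to $\kappa^e(gR)=R$ in Step 1 you need $g$ homogeneous, so that $\kappa^e(gR)$ is a homogeneous ideal not contained in $(x,y)$. Your $g=(x+y)^{ap^{e-1}}(xy)^N$ is homogeneous, so the argument goes through, just as the paper argues in the proof of \autoref{claim.Perezexample1}.
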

\begin{proof}
Since $p$ is odd, we have $a+1 < p$ even, and by \autoref{claim.Perezexample1}, the $F$-pure threshold at $t_1 = \frac{a+1}{p}$ is $t_2 = 1 - \frac{1}{2}t_1$. Since the $F$-pure threshold cannot increase along an increasing path, we only need to verify that
\[
\uptau\big(f_1^{a/p}, f_2^{1 - (a+1)/2p}\big) \neq \mathbb{F}_p[x,y].
\]
So we compute for $e$ sufficiently large
\begin{align*}
\uptau\Big(f_1^{\frac{a}{p}}, f_2^{1 - \frac{a+1}{2p}}\Big) &= \kappa^e \left( \mathbb{F}_p[x,y] \cdot (x+y)^{a p^{e-1}} (xy)^{p^e - \frac{a+1}{2} p^{e-1}}\right)\\ &= \kappa^e \left( \mathbb{F}_p[x,y] \cdot \sum_{i=0}^a \binom{a}{i} x^{p^e + (i - \frac{a+1}{2}) p^{e-1}} y^{p^e + (a -i - \frac{a+1}{2}) p^{e-1}}\right).
\end{align*} 
In this expression, for each term, the exponent of $x$ or $y$ is $\geq p^e$, and so $\uptau(f_1^{t_1}, f_2^{t_2}) \subset (x,y).$
\end{proof}

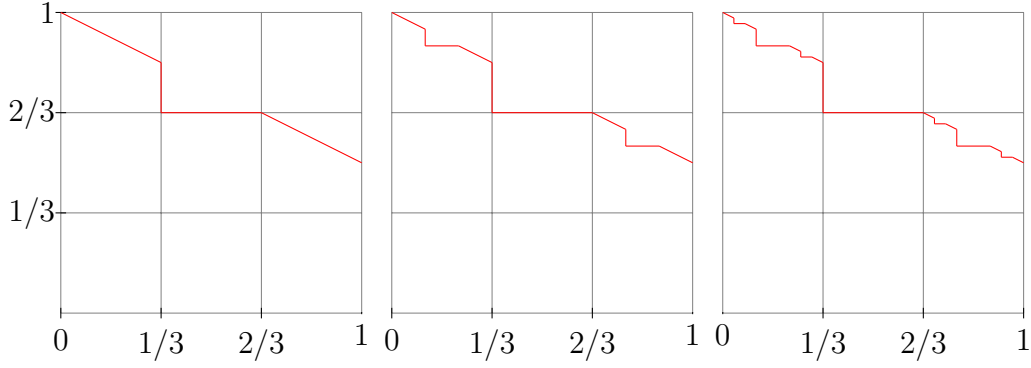
\begin{figure}
%!!! the picture code is scaled by factor of 10!!!
\begin{tikzpicture}[scale=0.4]
\begin{scope}
\draw[step=3.3333,gray,very thin] (0,0)grid(10,10);
\draw(0 cm,5pt)-- (0 cm,-5pt)node[anchor=north] {0};
\draw(3.3333 cm,5pt)-- (3.3333 cm,-5pt)node[anchor=north] {1/3};
\draw(-5pt, 3.3333 cm)-- (5pt,3.3333 cm)node[anchor=east] {1/3};
\draw(6.6666 cm,5pt)-- (6.6666 cm,-5pt)node[anchor=north] {2/3};
\draw(-5pt, 2/3*10 cm)-- (5pt,2/3*10 cm)node[anchor=east] {2/3};
\draw(-5pt, 10 cm)-- (5pt,10 cm)node[anchor=east] {1};
\draw(10 cm,1pt)-- (10 cm,-1pt)node[anchor=north] {1};
\draw[domain=0:3.3333,variable=\x,red, samples=1000] plot({\x}, {10-\x/2});
\draw[domain=3.3333:6.6666,variable=\x,red, samples=1000] plot({\x}, {min(10-\x/2,6.6666)});
\draw[red]  (3.3333,8.3333)-- (3.3333,6.6666);
\draw[domain=6.6666:10,variable=\x,red, samples=1000] plot({\x}, {10-\x/2});
\end{scope}
\begin{scope}[xshift=11cm]
\draw[step=3.3333,gray,very thin] (0,0)grid(10,10);
\draw(0 cm,5pt)-- (0 cm,-5pt)node[anchor=north] {0};
\draw(3.3333 cm,5pt)-- (3.3333 cm,-5pt)node[anchor=north] {1/3};
\draw(6.6666 cm,5pt)-- (6.6666 cm,-5pt)node[anchor=north] {2/3};
\draw(10 cm,1pt)-- (10 cm,-1pt)node[anchor=north] {1};
\draw[domain=0:1.1111,variable=\x,red, samples=1000] plot({\x}, {10-\x/2});
\draw[red] (1.1111, 8.8888) -- (2.2222, 8.8888);
\draw[red] (1.1111, 9.4444) -- (1.1111, 8.8888);
\draw[domain=2.2222:3.3333,variable=\x,red, samples=1000] plot({\x}, {10-\x/2});
\draw[domain=3.3333:6.6666,variable=\x,red, samples=1000] plot({\x}, {min(10-\x/2,6.6666)});
\draw[red]  (3.3333,8.3333)-- (3.3333,6.6666);
\draw[red] (7.7777, 6.1111) -- (7.7777,5.5555);
\draw[red] (7.7777, 5.5555) -- (8.8888,5.5555);
\draw[domain=6.6666:7.7777,variable=\x,red, samples=1000] plot({\x}, {10-\x/2});

\draw[domain=8.8888:10,variable=\x,red, samples=1000] plot({\x}, {10-\x/2});
\end{scope}
\begin{scope}[xshift=22cm]
\draw[step=3.3333,gray,very thin] (0,0)grid(10,10);
\draw(0 cm,1pt)-- (0 cm,-1pt)node[anchor=north] {0};
\draw(3.3333 cm,5pt)-- (3.3333 cm,-5pt)node[anchor=north] {1/3};
\draw(6.6666 cm,5pt)-- (6.6666 cm,-5pt)node[anchor=north] {2/3};
\draw(10 cm,5pt)-- (10 cm,-5pt)node[anchor=north] {1};
\draw[domain=0:0.37037,variable=\x,red, samples=5000] plot({\x}, {10-\x/2});
\draw[red] (10/27, 10 - 10/54) -- (10/27, 10-20/54);
\draw[red] (10/27, 10 - 20/54) -- (20/27, 10-20/54);
\draw[domain=0.7407:1.1111,variable=\x,red, samples=5000] plot({\x}, {10-\x/2});
\draw[red] (1.1111, 8.8888) -- (2.2222, 8.8888);
\draw[red] (1.1111, 9.4444) -- (1.1111, 8.8888);
\draw[domain=10*8/27:3.3333,variable=\x,red, samples=5000] plot({\x}, {10-\x/2});
\draw[domain=10*6/27:10*7/27,variable=\x,red, samples=5000] plot({\x}, {10-\x/2});
\draw[red] (70/27, 10 - 70/54) -- (70/27, 10-80/54);
\draw[red] (70/27, 10 - 80/54) -- (80/27, 10-80/54);
% second and last box
\draw[domain=3.3333:6.6666,variable=\x,red, samples=5000] plot({\x}, {min(10-\x/2,6.6666)});
\draw[red]  (3.3333,8.3333)-- (3.3333,6.6666);
\draw[red] (7.7777, 6.1111) -- (7.7777,5.5555);
\draw[red] (7.7777, 5.5555) -- (8.8888,5.5555);
\draw[red](190/27, 10 - 190/54) -- (190/27, 10 - 200/54);
\draw[red](190/27, 10 - 200/54) -- (200/27, 10 - 200/54);
\draw[domain=200/27:210/27,variable=\x,red, samples=5000] plot({\x}, {10-\x/2});
\draw[domain=180/27:190/27,variable=\x,red, samples=5000] plot({\x}, {10-\x/2});
\draw[domain=260/27:10,variable=\x,red, samples=5000] plot({\x}, {10-\x/2});
\draw[domain=240/27:250/27,variable=\x,red, samples=5000] plot({\x}, {10-\x/2});
\draw[red] (250/27, 10 - 1/2*250/27) -- (250/27, 10 - 1/2*250/27 - 5/27);
\draw[red] (250/27, 10 - 1/2*250/27 - 5/27) -- (260/27,10 - 1/2*250/27 - 5/27);
\end{scope}
\end{tikzpicture}
\caption{A recursive way to obtain the boundary for $p = 3$. The graph on the left is scaled by $1/3$ and replaces the graph in the intervals $[0,1/3]$ and $[2/3,1]$. The so obtained graph is then scaled by $1/3$ and replaces the graph in the intervals $[0,1/3]$ and $[2/3,1]$ in the final picture. Iterating this process indefinitely gives the boundary region.}
\end{figure}

\begin{claim}
Let $0 \leq l \leq \frac{p-1}{2}$. Then $T_{p\vert (2l, p -l-1)} \chi_{f}^{(x,y)} = \chi_{f}^{(x,y)}$. In particular, the points \begin{align*} &T_{p\vert (2l_{k-1}, p-l_{k-1}-1)}\circ \cdots \circ T_{p\vert (2l_1, p-l_1-1)}\left(\frac{a}{p}, 1-  \frac{a+1}{2p}\right) \text{ and }\\ &T_{p\vert (2l_{k-1}, p-l_{k-1}-1)} \circ \cdots \circ T_{p\vert (2l_1, p-l_1-1)} \left(\frac{a+1}{p}, 1-  \frac{a+1}{2p} \right)
\end{align*}
are boundary points of $\chi_{f}^{(x,y)}$ for all $k$, $0 \leq l_i \leq \frac{p-1}{2}$, and $a$ odd.
\end{claim}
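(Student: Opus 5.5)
Reduce to $\kay=\F_p$ via \autoref{cor.tauregulartransforms}, so that $R=\F_p[x,y]$, $f_1=x+y$, $f_2=xy$, and test ideals are computed by the Cartier operator $\kappa$ on monomials. The identity $T_{p\vert(2l,p-l-1)}\chi_f^{(x,y)}=\chi_f^{(x,y)}$ unwinds to the following statement: for all $\bm t\in\mathbb{R}^2_{\geq 0}$,
\[
\uptau\Big(f_1^{(t_1+2l)/p}, f_2^{(t_2+p-l-1)/p}\Big)=R \iff \uptau\big(f_1^{t_1},f_2^{t_2}\big)=R .
\]

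The plan is to use \autoref{le.CartierDividesbyP} with $e_0=1$, $\sC$ generated by $\kappa$, together with Skoda-type factorization, to write the left-hand side as
\[
\kappa\Big( f_1^{2l} f_2^{p-l-1}\cdot \uptau\big(f_1^{t_1}, f_2^{t_2}\big)\Big).
\]
Here I use the standard identity $\uptau(\fraa^{(\bm t+\bm b)/p})=\kappa(\fraa^{\bm b}\uptau(\fraa^{\bm t}))$ for integral $\bm b$ on a regular ring, which follows from the formula in \cite[Theorem 3.4]{BlickleStablerFunctorialTestModules} by inserting the integer exponents $\bm b$ before applying $\sC_1$.

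Set $g\coloneqq(x+y)^{2l}(xy)^{p-l-1}$. If $\uptau(f^{\bm t})\subset(x,y)$, then by homogeneity $\kappa(g\cdot\uptau(f^{\bm t}))\subset\kappa(g\,\m)$. The key computation is $\kappa(g)=\binom{2l}{l}\ne0$, because the only monomial of $g$ with both exponents $\equiv p-1 \pmod p$ is $\binom{2l}{l}(xy)^{p-1}$; indeed every monomial of $g$ has total degree $2l+2(p-l-1)=2p-2$. Consequently every monomial of $g\m$ has total degree at least $2p-1$, and any monomial $x^iy^j$ with $i,j\equiv p-1 \pmod p$ and $i+j\geq 2p-1$ must have $i+j\geq 3p-2$, so it contributes to $\kappa$ only elements of $\m$. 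Hence $\kappa(g\m)\subset\m$. Conversely, if $\uptau(f^{\bm t})=R$, then $\kappa(g)=\binom{2l}{l}$ is a unit, since $2l<p$, and the left-hand side is $R$. This proves the equivalence, hence the functional equation.

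For the second part, observe that $T_{p\vert\bm b}$ is precomposition with the affine contraction $\bm t\mapsto(\bm t+\bm b)/p$. Since $T_{p\vert\bm b}\chi=\chi$, the boundary of $\{\chi=1\}$ is carried into itself by this map and its inverse on the relevant region. Then \autoref{claim.Perezexample1} and \autoref{claim.Perezexample2} show that $(a/p,1-\frac{a+1}{2p})$ and $((a+1)/p,1-\frac{a+1}{2p})$ are boundary points, namely the corners of the horizontal step, and induction on $k$ gives the claim. The main obstacle is to justify cleanly the formula $\uptau(f^{(\bm t+\bm b)/p})=\kappa(f^{\bm b}\uptau(f^{\bm t}))$ and the homogeneity argument; I would carry these out explicitly via $\kappa^e$ on principal ideals as in the proofs of the previous claims, taking $e\gg0$.
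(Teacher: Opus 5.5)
Your proposal is correct, but it uses a different key lemma than the paper.

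\textbf{The paper's route.} It invokes P\'erez's Lemma 4.4. That lemma relies on flatness of Frobenius on $\F_p[x,y]$ and, as used here, turns $T_{p\vert\bm b}\chi_f^{N}$ into $\chi_f^{(N^{[p]}:f^{\bm b})}$. The paper then computes the colon ideal
$((x^p,y^p):(xy)^{p-l-1}(x+y)^{2l})=((x^{l+1},y^{l+1}):(x+y)^{2l})=(x,y)$.
The decisive input is that the coefficient $\binom{2l}{l}$ of $x^ly^l$ is nonzero.

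\textbf{Your route.} You stay on the Cartier side. \autoref{le.CartierDividesbyP} with $e_0=1$, plus Skoda for the principal ideals $(f_1),(f_2)$, gives $\uptau(f^{(\bm t+\bm b)/p})=\kappa(g\,\uptau(f^{\bm t}))$. You then evaluate $\kappa(g)=\binom{2l}{l}$ and show $\kappa(g\mathfrak m)\subseteq\mathfrak m$ by a degree count.

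\textbf{How they relate.} The two computations are dual under the adjunction $\kappa(F_*J)\subseteq\mathfrak m\iff J\subseteq\mathfrak m^{[p]}$. Your equivalence ``$\kappa(gJ)\subseteq\mathfrak m\iff J\subseteq\mathfrak m$'' is exactly the statement $(\mathfrak m^{[p]}:g)=\mathfrak m$, and the same binomial coefficient does the work in both.
\begin{itemize}
\item Your route is self-contained within the paper and needs no external lemma.
\item The paper's route yields the general rule $T_{p\vert\bm b}\chi_f^N=\chi_f^{(N^{[p]}:f^{\bm b})}$ for arbitrary $N$, which is what the $p$-fractal iteration needs in general.
\item The paper's route also gives an identity of ideals, so it never needs the dichotomy ``$\uptau=R$ or $\uptau\subseteq\mathfrak m$''.
\end{itemize}
Your boundary-point argument is the same as the paper's: the affine contraction $\bm t\mapsto(\bm t+\bm b)/p$ preserves $\chi_f^{(x,y)}$ and scales balls by $1/p$.

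\textbf{Two minor corrections.}
\begin{itemize}
\item Homogeneity is really used only for the dichotomy, i.e.\ to identify $\chi_f^{(x,y)}(\bm t)=1$ with $\uptau(f^{\bm t})=R$. The inclusion $\kappa(g\,\uptau(f^{\bm t}))\subseteq\kappa(g\mathfrak m)$ is just monotonicity. Even the dichotomy can be avoided: write $h=c+h'$ with $c\in\F_p^\times$ and $h'\in\mathfrak m$; then $\kappa(gh)\equiv c\binom{2l}{l}\bmod\mathfrak m$.
\item State the identity $\uptau(\mathfrak a^{(\bm t+\bm b)/p})=\kappa(\mathfrak a^{\bm b}\uptau(\mathfrak a^{\bm t}))$ only for principal ideals. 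There Skoda gives $\uptau(f^{\bm t+\bm b})=f^{\bm b}\uptau(f^{\bm t})$ for every $\bm t$, and that is all you use.
\end{itemize}
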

\begin{proof}
We use \cite[Lemma 4.4]{perezconstancyregions} to compute $
T_{p\vert (2l, p -l-1)} \chi_{f}^{(x,y)} = \chi_{f}^I$,  where
\[
I = \left(\left((x^p,y^p): (xy)^{p-l-1}\right) : (x+y)^{2l} \right) = \left((x^{l+1},y^{l+1}) : (x+y)^{2l} \right).
\] For $l = 0$, we have that $I = (x,y)$. So assume that $l > 0$; then 
\[ (x+y)^{2l} = \sum_{i=0}^{2l} \binom{2l}{i} x^{2l -i}y^{i} = \binom{2l}{l} x^l y^l + (x^{l+1},y^{l+1}).
\] The binomial coefficient is non-zero as $l \neq 0$. From this, we conclude that $I = (x,y)$.

For the addendum, we proceed as follows. By definition, a point $v$ lies on the boundary of a constancy region $C$ if for all $\varepsilon >0$, any ball centered around $v$ with radius $\varepsilon$ contains points lying in $C$ and points not lying in $C$. The points are obtained from \autoref{claim.Perezexample1} and \autoref{claim.Perezexample2} by applying the appropriate transformations $T_{p\vert(2l, p-l-1)}$ $k-1$ times. Since the points in \autoref{claim.Perezexample1} and \autoref{claim.Perezexample2} lie on the boundary, $T_{p\vert(2 l, p - l -1)}(v - \varepsilon w) = T_{p\vert(2 l, p - l -1)}(v) - \varepsilon/p \cdot w$ and $T_{p\vert(2 l, p - l -1)} \chi_f^{(x,y)} = \chi_f^{(x,y)}$, we conclude that $T_{p\vert(2 l, p - l -1)}(v)$ is also a boundary point.
\end{proof}

\begin{claim}
The points 
\[
T_{p\vert (2l_{k-1}, p-l_{k-1}-1)}\circ \cdots \circ T_{p\vert (2l_1, p-l_1-1)}\left(a/p, 1- (a+1)/2p\right),
\] 
with $0 \leq l_i \leq \frac{p-1}{2}$ and $0 < a < p$ odd, are the boundary points whose first coordinate is of the form $b/p^k$ with $b$ odd, $0 < b <p^k$, and not contained in any interval $[a/p^r,(a+1)/p^r]$ with $0 < a < p^r$ odd and $r < k$. The second coordinate of such a boundary point is $1 - \frac{1}{2} \cdot \frac{b+1}{p^k}$
\end{claim}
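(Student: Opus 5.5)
We compute how the maps $T_{p\vert(2l,p-l-1)}$ act on first coordinates, then feed in the boundary points from the two earlier claims.

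\emph{Action on coordinates.} Write $\tau_l(t_1,t_2) = \big((t_1+2l)/p,\,(t_2+p-l-1)/p\big)$. Since $T_{q\vert b}\Phi(t)=\Phi((t+b)/q)$, the identity $T_{p\vert(2l,p-l-1)}\chi_f^{(x,y)}=\chi_f^{(x,y)}$ from the previous claim says that $\chi_f^{(x,y)}$ is invariant under pulling back along $\tau_l$. For $0\le l\le \frac{p-1}{2}$ the first coordinate of $\tau_l$ lands in $[2l/p,(2l+1)/p]$, and $2l$ is even. So the image of $[0,1]$ under $\tau_l$ is exactly the complement of the intervals $[a/p,(a+1)/p]$ with $a$ odd.

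\emph{Induction on $k$ for the first coordinate.} Write $t=(a/p,\,1-(a+1)/(2p))$ for the base point from \autoref{claim.Perezexample2}. We prove by induction on $k$ that the first coordinates of the composites $\tau_{l_{k-1}}\circ\cdots\circ\tau_{l_1}(t)$ are exactly the numbers $b/p^k$ with $b$ odd and $0<b<p^k$ that lie in no interval $[a'/p^r,(a'+1)/p^r]$ with $a'$ odd and $r<k$.
\begin{itemize}
\item Base case $k=1$: this is immediate.
\item Step: applying $\tau_l$ sends $c/p^k$ to $(c+2lp^k)/p^{k+1}$. Because $p$ is odd, $c$ odd implies $c+2lp^k$ odd.
\item Pulling back excluded intervals: $\tau_l$ maps $[a'/p^r,(a'+1)/p^r]$ onto $[(a'+2lp^r)/p^{r+1},(a'+1+2lp^r)/p^{r+1}]$, which is an excluded interval at level $r+1$. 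So it preserves non-membership in excluded intervals.
\item Surjectivity: given an admissible $b/p^{k+1}$, it avoids the level-one odd intervals, so it lies in some $[2l/p,(2l+1)/p]$. Then $b=c+2lp^k$ for some $c$, and $c/p^k$ is admissible at level $k$, since any level-$(<k)$ excluded interval containing it would map to one containing $b/p^{k+1}$.
\end{itemize}

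\emph{Second coordinate.} We check that $\tau_l$ preserves the relation $t_2=1-\frac{1}{2}\cdot\frac{b+1}{p^k}$. Suppose $s_2=1-(c+1)/(2p^k)$. Then
\[
\frac{s_2+p-l-1}{p} = 1-\frac{l}{p}-\frac{c+1}{2p^{k+1}} = 1-\frac{(c+2lp^k)+1}{2p^{k+1}},
\]
which is the required formula for $b=c+2lp^k$. Combined with the previous claim, which shows these images are boundary points, this gives the stated description.

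\emph{Main obstacle.} The hard part is the assertion that these are \emph{the} boundary points with such first coordinate, i.e.\ uniqueness of the second coordinate. For this we use that the boundary above a fixed first coordinate is a single threshold, because $\uptau$ is monotone along increasing paths. We also use that the bijection $\tau_l$ carries the constancy structure (via the invariance of $\chi_f^{(x,y)}$) on the strip $[0,1]\times[0,1]$ into the strip $t_1\in[2l/p,(2l+1)/p]$, $t_2\in[(p-l-1)/p,1]$. Along the way we need to check that the whole boundary in the relevant vertical line lies in that image strip.
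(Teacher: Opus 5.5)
The step that fails is the one you flag as the main obstacle. You invoke the ball sense of boundary from the preceding claim to know the images are boundary points. With that same sense, it is false that the boundary above a fixed first coordinate is a single point, and monotonicity does not say so. Monotonicity only shows that each vertical slice of the region $\{\chi_f^{(x,y)}=1\}$ is an interval $[0,\mathrm{fpt}(t_1))$, which is a statement about the one-dimensional slice.

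Over $t_1=a/p$ with $a$ odd, the boundary contains the whole riser $\{a/p\}\times[1-\frac{a+1}{2p},1-\frac{a}{2p}]$. This is the vertical piece of length $\frac{1}{2p}$ in the paper's figure and in its boundary-length computation. To see it, note that for $0<c<1$ one has $\mathrm{fpt}(\frac{c+2l}{p})=\frac{\mathrm{fpt}(c)+p-l-1}{p}$. This follows from $\chi_f^{(x,y)}\circ\tau_l=\chi_f^{(x,y)}$, from $\chi_f^{(x,y)}(\tau_l(c,0))=\chi_f^{(x,y)}(c,0)=1$ (as $\uptau(f_1^{c})=R$ for $c<1$), and from monotonicity in $t_2$. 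Starting from \autoref{claim.Perezexample1}, this gives $\mathrm{fpt}=1-\frac{t_1}{2}$ at $t_1=a/p-p^{-m}$, whose base-$p$ digits $a-1,p-1,\dots,p-1$ are all even. On the other hand, $\mathrm{fpt}(a/p)=1-\frac{a+1}{2p}$ by \autoref{claim.Perezexample2}.

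So the uniqueness you set out to prove is false, and the paper does not attempt it. Its proof only shows that the first coordinates of these already-known boundary points are exactly the admissible $b/p^k$, each attained once, with second coordinate $1-\frac{b+1}{2p^k}$. The sharper statement that is true is that each such point is the $F$-pure threshold point over $b/p^k$, i.e.\ the bottom of the riser. Your transport idea proves exactly this once it is made precise as the threshold formula above, by induction from \autoref{claim.Perezexample2}.

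Apart from that, your coordinate computations cover what the paper proves, by a different route. The paper characterizes admissibility through base-$p$ digits: $b/p^k=0.b_1\cdots b_k$ is admissible iff $b_1,\dots,b_{k-1}$ are even and $b_k$ is odd. It observes that both sets have $(\frac{p+1}{2})^{k-1}\frac{p-1}{2}$ elements, so it only needs that the images are admissible, which amounts to prepending the even digit $2l$. Your induction proves both inclusions directly and avoids the count.

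However, ``so it preserves non-membership'' does not follow from what precedes it. That $\tau_l$ maps excluded intervals of level $r$ onto excluded intervals of level $r+1$ is what your surjectivity step uses; preservation needs the converse. The converse is easy. Suppose $r\ge 1$, $a''$ is odd, and $[a''/p^{r+1},(a''+1)/p^{r+1}]$ meets the open strip $(2l/p,(2l+1)/p)$. Then $2lp^r\le a''<(2l+1)p^r$, so this interval is the $\tau_l$-image of the level-$r$ interval with index $a''-2lp^r$, which is odd because $2lp^r$ is even. Also, the level-one excluded intervals miss that open strip. Relatedly, in your first paragraph the union of the images of $[0,1]$ is the complement of the \emph{open} excluded intervals, not the closed ones.

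Your second-coordinate computation is correct; the formula displayed in the paper's proof has a spurious extra factor $2$ in the denominator.
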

\begin{proof}
Writing the base-$p$ terminating expansion $b/p^k=0.b_1b_2\ldots b_k$, we see that it is not contained in such an interval if and only if all $b_1, \ldots, b_{k-1}$ are even. By our assumption on $b$, we must have that $b_k$ is odd. There are precisely $(\frac{p+1}{2})^{k-1}\cdot\frac{p-1}{2} $ points of this type. This is also the number of points produced by applying $T_{p\vert (2l_i, p-l_i-1)}$ $k-1$ times, so it suffices to show one inclusion. To this end, it suffices to show that if $a/p^k$, $a$ odd, and $0 < a < p^k$ is not contained in any interval $[b/p^r,(b+1)p^r]$ for $r < k$, and $0 < b <p^r$ odd, then 
\[ \frac{a + 2p^kl_i}{p^{k+1}} \notin  \left[\frac{b}{p^r},\frac{b+1}{p^r}\right]
\] for any $0 < b < p^r$ odd and $r \leq k$. By the above, the (terminating) base-$p$ expansion of $a/p^k$ has the form $0.a_1\ldots a_k$ with $a_1, \ldots, a_{k-1}$ even and $a_k$ odd. We divide $a$ by $p$, which shifts indices one to the right. Since we then add $\frac{2l_i}{p}$, the first term in the base $p$ expansion is even.

In order to compute the second coordinate of such a boundary point, it suffices to show by induction that if we apply $T_{p\vert(2l, p-l-1)}$ to a point $(b/p^k, 1 - (b+1)/2p^k)$, then the second coordinate is of the form $1 - \frac{1}{2} \frac{b+1 + 2p^k l}{2p^{k+1}}$. This in turn follows from a direct computation.
\end{proof}

\begin{claim}
The intervals $[a/p^k,(a+1)/p^k]$ with $a$ odd, $0 < a < p^k$ and $k \in \mathbb{N}$ cover $(0,1)$.
\end{claim}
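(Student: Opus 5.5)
The plan is to translate membership in an interval $[a/p^k,(a+1)/p^k]$ into a statement about the base-$p$ digits of a point, and then argue digit by digit. Fix $x \in (0,1)$ with base-$p$ expansion $x = 0.x_1x_2x_3\ldots$, and use the terminating expansion when $x$ is a $p$-adic rational. For each $k$, the unique $a_k$ with $a_k/p^k \leq x < (a_k+1)/p^k$ is
\[
a_k = \lfloor p^k x\rfloor = \sum_{i=1}^k x_i p^{k-i}.
\]
Since $p$ is odd, every power $p^{k-i}$ is odd, so
\[
a_k \equiv x_1 + \cdots + x_k \pmod 2 .
\]
Hence $x$ lies in one of the given intervals of level $k$ exactly when one of two things happens. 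Either the partial digit sum $x_1+\cdots+x_k$ is odd, and then $a=a_k$ works, with $0<a_k<p^k$ automatically since $a_k$ is odd and $x<1$. Or $x=(a+1)/p^k$ is a right endpoint with $a$ odd.

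The first step is to show that the first alternative covers every $x$ having at least one odd digit. Let $j$ be the first index with $x_j$ odd. The partial sums $x_1+\cdots+x_{j-1}$ are then even, so $x_1+\cdots+x_j$ is odd, and $x \in [a_j/p^j,(a_j+1)/p^j]$ with $a_j$ odd. This is the same digit bookkeeping used in the preceding claim, where the points not lying in a lower-level interval are exactly those whose leading digits are even. The second step treats $p$-adic rationals $x=c/p^k$ with $c$ even. Here I would check whether $x-1/p^k=(c-1)/p^k$ is odd-numerated, which it is, so $x$ is a right endpoint of $[(c-1)/p^k, c/p^k]$. Thus every $p$-adic rational in $(0,1)$ is covered.

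The remaining case, which I expect to be the main obstacle, is an $x$ that is not a $p$-adic rational and whose base-$p$ digits are all even. For such $x$ every $a_k$ is even and $x$ is never an endpoint. The literal statement then requires an odd $a$ that the digit computation does not produce. I would first test this on the simplest instance, $p=3$ and $x=0.\overline{02}_3=1/4$. Here every $\lfloor 3^k/4\rfloor$ is even, so $1/4$ is not in any of the intervals, and the literal claim appears to fail on this Cantor-type set of all-even-digit points.

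My plan is therefore to prove the covering for all points outside this set. The set has empty interior and measure zero, so the union of the intervals is dense in $(0,1)$. I would then recover what the figure argument actually needs from the previous claims by closure. The boundary $\{t_2 = 1-\tfrac12 t_1\}$-type description is obtained on a dense set of $t_1$, and it extends to all $t_1 \in (0,1)$ because boundaries of constancy regions are closed and the described curve is continuous from the appropriate side. If the authors intend the literal covering, this is precisely the point I would flag for them to re-check.
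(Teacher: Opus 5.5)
Your conclusion is right: the statement is false as written, and $1/4$ for $p=3$ is a valid counterexample. Since $3^k\equiv 1$ or $3 \pmod 8$, the number $\lfloor 3^k/4\rfloor\in\{(3^k-1)/4,\,(3^k-3)/4\}$ is always even. Because $3^k/4\notin\mathbb{Z}$, the only level-$k$ interval containing $1/4$ is the one with $a=\lfloor 3^k/4\rfloor$, so $1/4$ lies in none of the intervals. For general odd $p$ the point $2/(p^2-1)=0.\overline{02}_p$ works the same way. Your parity identity $\lfloor p^k x\rfloor\equiv x_1+\cdots+x_k \pmod 2$ pins down the uncovered set exactly: it consists of the non-$p$-adic-rational $x\in(0,1)$ all of whose base-$p$ digits are even. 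For $p=3$ this is the Cantor set minus its triadic rationals; in general it is uncountable, null, and has empty interior. Your argument that every other point is covered is complete.

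The paper's proof does not get around this; its last step is wrong. After reducing to $(a-1)/p^k\le c/d\le a/p^k$ with $a$ odd, it proves $b/p^m<c/d<(ap^{m-k}+1)/p^m$ for $b=(a-1)p^{m-k}+1$. It then concludes $c/d\in[b/p^m,(b+1)/p^m]$. But $(b+1)/p^m=(a-1)/p^k+2/p^m$, whereas the upper bound actually proved is $(a-1)/p^k+1/p^k+1/p^m$. So the inequality only places $c/d$ in a window of length $1/p^k$. That window is a union of $p^{m-k}$ consecutive level-$m$ intervals whose numerators alternate in parity. With $p=3$, $c/d=1/4$, $k=a=1$, $m=2$ one gets $b=1$ and $1/4\notin[1/9,2/9]$. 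The argument also only treats rational points. The claim should be corrected to your version: the intervals cover $(0,1)$ except for that exceptional null set.

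For the use in the example, your closure repair is the right idea, but base it on monotonicity. Closedness of $\partial A_p$ alone only shows that $(t_1,1-t_1/2)\in\partial A_p$ for exceptional $t_1$. It does not show that this is the only boundary point above $t_1$.

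Here is the argument. Let $g(t_1)$ be the $F$-pure threshold in $t_2$; it is non-increasing in $t_1$, which the paper itself uses. For a covered $s$, take the least $j$ with $s\in[b/p^j,(b+1)/p^j]$ and $b$ odd. That interval is not contained in a lower-level one, so the preceding claims plus monotonicity give $g(s)=1-(b+1)/(2p^j)$. Hence $|g(s)-(1-s/2)|\le 1/(2p^j)$. Near an exceptional $t_1$ this level $j$ must tend to infinity, since the finitely many intervals of bounded level are closed and miss $t_1$. Squeezing $g(s')\le g(t_1)\le g(s)$ for covered $s<t_1<s'$ then yields $g(t_1)=1-t_1/2$. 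Thus over the exceptional set the boundary of $A_p$ meets the line $t_1+2t_2=2$, and this completes the description of the boundary.
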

\begin{proof}
Given $\frac{c}{d}$ with $c < d$, we have to show that there is $a$ and $k$ as above such that $ {a}/{p^k} \leq {c}/{d} \leq (a+1)/{p^k}$. Choose any $a, k$.  We may assume that $ {(a-1)}/{p^k} \leq {c}/{d} \leq a/{p^k}$. Then, we may choose $m \gg 0$ such that 
\[ \frac{(a-1) p^{m-k} +1}{p^{m}} = \frac{a-1}{p^k} + \frac{1}{p^m} < \frac{c}{d} < \frac{a}{p^k} + \frac{1}{p^m} = \frac{a p^{m-k} +1}{p^m}. \] Hence, $\frac{c}{d}$ is in the interval $[b/p^m,(b+1)/{p^m}]$ with $b = (a-1) p^{m-k} +1$, which is odd.
\end{proof}

\begin{claim}
If $A_p$ and $A$ denote the first constancy region in characteristic $p$ and zero, respectively, then $d_\mathrm{H}(A, A_p)=1/p$, where we take the maximum norm as our metric on $\mathbb{R}^2$.
\end{claim}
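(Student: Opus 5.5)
We first identify both regions explicitly. In characteristic zero, by the multiplier ideal formula above, the first constancy region $A$ (the one containing $\bm 0$, where $\mathcal{J}=R$) restricted to the unit square is $\{t_1+2t_2<2\}\cap[0,1]^2$. Its closure is therefore bounded above by the segment $t_2 = 1-\tfrac12 t_1$ for $t_1\in[0,1]$. In characteristic $p$, the preceding claims give the boundary of $A_p=\{\bm t : \uptau(f_1^{t_1}f_2^{t_2})=R\}$ as the graph of the $F$-pure threshold function $t_1\mapsto \mathrm{fpt}(t_1)$. We have $\mathrm{fpt}(t_1)\le 1-\tfrac12 t_1$ everywhere (fpt $\le$ lct), so $A_p\subset A$. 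Hence $\sup_{a\in A_p} d(a,A)=0$, and only $\sup_{b\in A} d(A_p,b)$ has to be computed. Because the regions are ``downward closed'' in $t_2$, this supremum is governed by the vertical and diagonal gaps between the two boundary curves.

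Next we describe $\mathrm{fpt}$ completely. By the covering claim, every $t_1\in(0,1)$ lies in some interval $[b/p^k,(b+1)/p^k]$ with $b$ odd, chosen with minimal $k$. On that interval the earlier claims together with monotonicity (the fpt is non-increasing along increasing paths) show that $\mathrm{fpt}(t_1)=1-\tfrac12\cdot\tfrac{b+1}{p^k}$ is constant. It is a horizontal step that touches the line $t_2=1-\tfrac12 t_1$ at the right endpoint $t_1=(b+1)/p^k$. At even numerators, $\mathrm{fpt}$ agrees with the line. The vertical discrepancy on such an interval is at most $\tfrac12\cdot\tfrac1{p^k}$, attained at the left endpoint. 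The largest discrepancy occurs for $k=1$, $b=1$, which sits inside $[1/p,2/p]$ (for $p=3$ this is the step in the figure). Here, however, the step extends further: for $p=3$ the picture shows a flat piece at height $2/3$ on $[1/3,2/3]$. So I would recompute the $k=1$ steps carefully, where the odd-$a$ intervals are $[a/p,(a+1)/p]$.

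The distance estimate then goes as follows. For $b=(t_1,1-\tfrac12 t_1)\in\partial A$ with $t_1\in[b'/p^k,(b'+1)/p^k]$, consider the point of $\overline{A_p}$ at $((b'+1)/p^k,\,1-\tfrac12(b'+1)/p^k)$. It lies on the line, and its max-norm distance from $b$ is at most $1/p^k\le 1/p$. This gives $d_{\mathrm H}\le 1/p$. For the reverse inequality, we take $b$ to be the left endpoint $(1/p,\,1-\tfrac1{2p})$ of the first odd interval, or the corresponding corner in the figure. We show that every point of $\overline{A_p}$ lies at max-distance at least $1/p$ from it. In the max norm this means the square of half-side $1/p$ around $b$ meets $\overline{A_p}$ only on its boundary. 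We check this using the explicit step: $\overline{A_p}$ lies below height $1-\tfrac1p$ for $t_1\in[1/p,2/p]$, and below the line elsewhere.

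The main obstacle is this lower bound. It requires pinning down exactly which point of $\partial A$ realizes the supremum, and verifying that the staircase $\overline{A_p}$ does not come closer than $1/p$ in the max norm. Points of $A_p$ to the left of $1/p$ lie on the line, so they are close horizontally, and this has to be reconciled with the value $1/p$ claimed in the statement. My first attempt would be to take $b=(2/p,1-1/p)$ (the right end of the widest flat step, where the line meets it) against the left corner. If the numbers do not come out to exactly $1/p$, I would recheck the step heights in the claims for $F$-pure thresholds at odd $a$.
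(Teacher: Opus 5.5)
Your upper bound is the same as the paper's: over an odd interval $[b/p^k,(b+1)/p^k]$ you move horizontally to $\bigl((b+1)/p^k,\,1-\tfrac12\cdot\tfrac{b+1}{p^k}\bigr)\in\overline{A_p}$, and with $A_p\subset A$ this gives $d_{\mathrm H}\le 1/p$. The covering claim you invoke does not hold as stated: for example, $t_1=\tfrac14=0.0202\ldots$ in base $3$ lies in no odd interval. Such $t_1$ have only even digits, though, so $(t_1,1-\tfrac12 t_1)$ is a limit of points $(x,1-\tfrac12 x)\in\overline{A_p}$ with $x$ a terminating truncation, and the bound survives.

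The gap is the lower bound, and it cannot be closed by choosing a better witness. Your own observation that over a level-$k$ step $\mathrm{fpt}$ lies at most $\tfrac1{2p^k}$ below the line already gives $d_{\mathrm H}\le\tfrac1{2p}<\tfrac1p$, by moving straight down to $(t_1,\mathrm{fpt}(t_1))\in\overline{A_p}$. Both of your candidate points actually lie in $\overline{A_p}$. The point $(2/p,1-\tfrac1p)$ is where the step meets the line. For odd $a$, the numbers $a/p-p^{-k}=0.(a-1)(p-1)\cdots(p-1)$ have only even base-$p$ digits, so $\mathrm{fpt}$ equals the line value there. Since $\mathrm{fpt}$ is non-increasing, $\mathrm{fpt}>1-\tfrac{a}{2p}$ on $[0,a/p)$, hence $\{a/p\}\times[0,1-\tfrac{a}{2p}]\subset\overline{A_p}$. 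This refutes your assertion that $\overline{A_p}$ stays below height $1-\tfrac1p$ over $[1/p,2/p]$, and it puts $(1/p,1-\tfrac1{2p})$ in $\overline{A_p}$.

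Carrying this through, take a point $\bigl(a/p+u,\,1-\tfrac12(a/p+u)\bigr)$ of $\partial A$ over a level-one step, with $a$ odd and $0\le u\le\tfrac1p$. Its max-norm distance to $\overline{A_p}$ is exactly $\min\bigl(u,\tfrac12(\tfrac1p-u)\bigr)$: either move left onto that vertical segment or down onto the flat at height $1-\tfrac{a+1}{2p}$. This peaks at $\tfrac1{3p}$ for $u=\tfrac1{3p}$. Points of $\overline{A}$ below the line are no farther, and finer steps contribute at most $\tfrac1{2p^2}<\tfrac1{3p}$. So in the max norm $d_{\mathrm H}(\overline{A},\overline{A_p})=\tfrac1{3p}$, and the claimed equality with $1/p$ cannot be proved.

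The step heights you wanted to recheck are right: for $p=3$, the flat at height $\tfrac23$ on $[\tfrac13,\tfrac23]$ is exactly the level-one step $[\tfrac1p,\tfrac2p]$ at height $1-\tfrac1p$. No recomputation will rescue the value. The paper's proof breaks at the same point. It asserts that the horizontal bound $1/p^k$ is attained at $t_1=a/p^k$, overlooking that the drop there is only $\tfrac1{2p^k}$ and that the vertical segment above $a/p^k$ lies in $\overline{A_p}$. What your argument and the paper's actually establish is the upper bound $d_{\mathrm H}\le 1/p$, which suffices for the convergence asked about in the Hausdorff-distance question.
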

\begin{proof}
If the first coordinate of $\bm{t} \in A_p$ is in an interval $[{a}/{p^k}, (a+1){p^k}]$ with $a$ odd, then $d(\bm{t}, A) \leq {1}/{p^k}$, and equality is attained if $t_1 = {a}/{p^k}$ avoids any interval with a smaller exponent. Since $A_p \subset A$, we conclude that $d_\mathrm{H}(A_p, A) = {1}/{p}$.
\end{proof}

Finally, we note that the length of the boundary of $A_p$ is $2$. Indeed, the contribution by the staircase function is given by
\[ \sum_{k=1}^\infinity \frac{3}{2} \frac{1}{p^k} \cdot \left(\frac{p+1}{2}\right)^{k-1} \cdot \frac{p-1}{2} = \frac{3}{2} \frac{p-1}{p+1} \left(\sum_{k=0}^\infinity \left(\frac{p+1}{2p} \right)^k -1\right)=  \frac{3}{2}.\]
Here $\frac{3}{2} \frac{1}{p^k}$ is the length of any staircase segment. Indeed, for $k =1$ the length of such a staircase segment is given by \[ \left\lVert \biggl(\frac{a}{p},1 - \frac{a+1}{2p}\biggr) - \biggl(\frac{a}{p}, 1 - \frac{1}{2}\cdot \frac{a}{p}\biggr) \right\rVert + \left\lVert\biggl(\frac{a}{p}, 1 - \frac{a+1}{2p}\biggr) - \biggl(\frac{a+1}{p}, 1 - \frac{1}{2}\frac{a+1}{p}\biggr) \right\rVert = \frac{1}{2p} + \frac{1}{p}.\] Since the operators $T_{p\vert -}$ scale everything by $1/p$ and translate, the length of a staircase segment obtained by iterating $T$ $k$ times is as claimed. The last two terms in the product are the number of these segments. Since the other boundary parts (given by parts of the coordinate axes and the line segment between $(1,0)$ and $(1, 1/2)$) have length $\frac{5}{2}$, we get $2$ in total. In particular, the Hausdorff dimension of the boundary is $1$.
\end{example}

\bibliographystyle{skalpha}
\bibliography{MainBib}

\end{document}